\documentclass[a4paper,11pt]{article}

\bibliographystyle{alpha}

\usepackage[ansinew]{inputenc}
\usepackage[T1]{fontenc}
\usepackage{geometry}
\usepackage[none]{hyphenat}
\geometry{verbose,tmargin=3cm,bmargin=3cm,lmargin=4cm,rmargin=4cm,footskip=1cm} 
\usepackage{fancyhdr}
\pagestyle{fancy}
\setlength{\parskip}{\medskipamount}
\setlength{\parindent}{0pt}
\usepackage{babel}
\usepackage{amsmath}
\newcommand\norm[1]{\lVert#1\rVert}
\usepackage{amsthm}
\usepackage{amssymb}
\usepackage{stackrel}
\usepackage{graphicx}
\usepackage{mathtools}
\usepackage{hyperref}
\usepackage{verbatim}
\usepackage{wrapfig}

\usepackage[colorinlistoftodos, textwidth=\marginparwidth]{todonotes} 

\usepackage[inline]{enumitem}
\usepackage{float}

\usepackage{breakurl}
\usepackage{listings}
\usepackage{color}
\definecolor{qwe}{rgb}{0,0.7,0}
\definecolor{gr}{rgb}{0.6,0.6,0.6}
\lstset{frame=tb,
  language=Java,
  aboveskip=3mm,
  belowskip=3mm,
  showstringspaces=false,
  columns=flexible,
  basicstyle={\small\ttfamily},
  numbers=none,
  numberstyle=\tiny\color{gr},
  keywordstyle=\color{blue},
  commentstyle=\color{qwe},
  stringstyle=\color{red},
  breaklines=true,
  breakatwhitespace=true,
  tabsize=3
}

\usepackage{pdflscape} 

\makeatletter

%%%%%%%%%%%%%%%%%%%%%%%%%%%%%% 

\usepackage{enumitem}		
      
  \theoremstyle{plain}
  \newtheorem*{prop*}{\protect\propositionname}
  \theoremstyle{plain}
  \newtheorem*{lem*}{\protect\lemmaname}

\newtheorem{theorem}{Theorem}%[section] 
\newtheorem{definition}{Definition}%[section] %
\newtheorem{corollary}{Corollary}[theorem]

\newtheorem{question}{Open Question}%[section]
\newtheorem{conjecture}{Conjecture}%[section]
\newtheorem{proposition}{Proposition}%[section]

\usepackage{tabularx}
\newcolumntype{Y}{>{\centering\arraybackslash}X} 
\usepackage{booktabs}

\usepackage{listings}\lstset{language=R, basicstyle=\footnotesize}

\makeatother
  \providecommand{\lemmaname}{Lemma}
  \providecommand{\propositionname}{Satz}

\begin{document}
\emergencystretch 4em 
\fancyhead{} 
\fancyhead[LE,RO]{\thepage}
\fancyhead[LO,RE]{\rightmark}

\begin{titlepage}
    \begin{center}
        \vspace*{1cm}
            
        \LARGE
        \textbf{The $n$-Queens Problem in Higher Dimensions}
            
        \vspace{1.5cm}
        \normalsize
        Masterarbeit unter der Betreuung von\\
        \Large
Prof. Dr. Thorsten Koch \\
Institut f\"ur Mathematik \\
Technische Universit\"at Berlin \\
        \vspace{0.8cm}
        
        \normalsize
        vorgelegt von \\
        \Large
        Tim Kunt \\
            
        \vspace{1.5cm}

        \vspace{0.5cm}

        Berlin, 20.12.2023
            
    \end{center}
\end{titlepage}

\thispagestyle{fancy}

\newpage
\tableofcontents

\newpage
\section*{Acknowledgments}
Two years ago, I unsuspectingly visited the lecture 'Industrial Data Science' at Technical University Berlin, hoping to learn more about the field I had pivoted towards. The lecture contained a few applications of mathematical optimization, and to my surprise, these parts were very much relevant for the final oral exam (I had most of my attention focused on the probability theory and modeling parts, where I felt comfortable). Thorsten Koch and Charlie Vanaret, who had been responsible for my weekly mathematical entertainment, allowed me to repeat the exam after I had barely passed it, unable to write down the IP formulation for a given problem.\\
Regardless, I enjoyed Thorsten's teaching style, the tangents and stories, and the terrors of applied mathematics, so that I would join the following two seminars on integer programming, still mostly clueless about the topic. Trying to catch up, the first IP model I came up with was regarding three-dimensional chess after having played \textit{5D Chess With Multiverse Time Travel} \cite{5dchess} and through the visual intuition (see Fig. \ref{fig:sudoku}) of how to model a sudoku using integer programming.
\begin{figure}[H]
    \centering
    \includegraphics[width=1\linewidth]{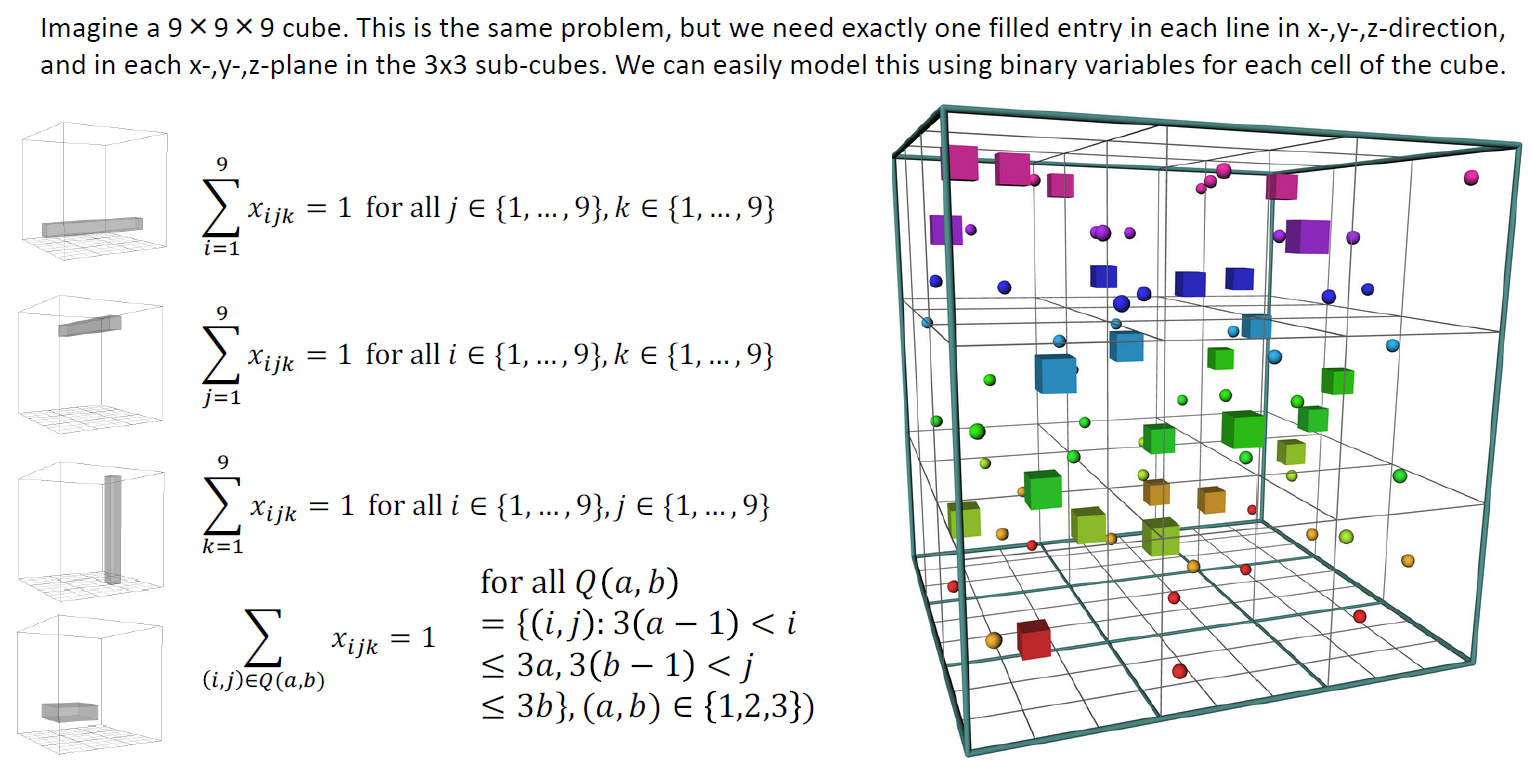}
    \caption{ILP formulation of a sudoku \cite{Koch2020}}
    \label{fig:sudoku}
\end{figure}
What started as a coding and learning project for the seminar turned out to be an easy-to-understand yet complex-to-solve problem. Having picked up the $n$-queens problem again for this thesis, I have learned to appreciate the torture of seemingly unsolvable instances and the excitement of incremental progress.

I thank Michael Simkin for his comments on my questions and ideas regarding the $n$-queens problem in 3D and its enumeration. As the known results on the $n$-queens problem in higher dimensions are quite scattered or unpublished, the \textit{Al Zimmermann's Attacking Queens Contest} \cite{ZimmermannContest} turned out to be a valuable source and starting point. Thanks to Michael Ecker, who was able to find and share the original announcement of the contest. I thank Stephen Montgomery-Smith, Joe Zbiciak and Karl Grill, all of whom were involved in the contest, for their notes and help regarding the contest's history.
\newpage 
I am very grateful to the entire A2IM department at ZIB, in particular Milena Petkovic, Janina Zittel and Thorsten Koch for their trust and support throughout our work and during this thesis. I greatly appreciate the creative freedom we are given and hope to find and work on many more diverse research topics and projects.

Thanks to Mark Ruben Turner, whom I could confront with questions at any time. Your advice helped me in both the technical implementation and gaining confidence and insight. When I learned that Timo Berthold would be supervising this thesis together with Thorsten Koch, I was reassured and scared at the same time. With your expertise, Timo, you will surely spot all the loose ends and opportunities for improvement in this thesis. Thank you for listening and for your help at all times. Lastly, I thank my professor and supervisor, Thorsten Koch. You sparked my interest in the topic in the first place and are the reason I ended up at ZIB.\\\
\\
\bigskip \bigskip
\\
\textit{"I think that I may personally be done with the n-queens problem for a while, not because there isn't anything more to do with it but just because of I've been dreaming about chess and I'm ready to move on with my life."}
\begin{flushright}
Michael Simkin \cite{physorg}
\end{flushright}

\newpage
\section{Introduction}
\subsection{Motivation}
The $n$-queens problem asks whether it is possible to place $n$ mutually non-attacking queens on an $n \times n$ chessboard. This thesis considers the $n$-queens problem in higher dimensions with the goal of solving its instances or obtaining bounds towards their solutions.

The classical $n$-queens problem, while over 150 years old, is the source of various optimization and decision problems. While solving the classical $n$-queens problem (i.e., finding a certificate to the decision problem) can be achieved through various constructions in linear time, the generalization of the $n$-queens problem to higher dimensions turns out to be highly non-trivial.

$n$-queens and its many variations can be explained in just a few sentences, and its mathematical formulation is quite compact, yet it poses interesting, hard and unsolved questions. For this reason, the topic is well-suited for benchmarks that compare different methodologies and algorithms. Throughout the long history of the problem, there have been steady and incremental advancements. Both the innovation in technology and algorithms, as well as new theoretical results, enable this progress.\\
At the same time, the $n$-queens problem allows emerging technologies to prove themselves, comparing them to classical methods. Several publications on solving the problem with neural networks were published in the past few decades, and the first ones are tackling the problem using quantum computing now. Thus, we are also interested in stating and comparing our integer programming methods on instances of the $n$-queens problem in higher dimensions.

There are (at least) three angles from which the $n$-queens problem in 3D and higher dimensions can be motivated:  
\begin{itemize}
    \item[1)] \textbf{Chess problems in higher dimension}\\
    The first and most intuitive one is through the imagination of three- (or higher-) dimensional chess. This opens up the question of how different chess pieces move in higher dimensions and provides a starting point for a great variety of generalizations of mathematical- and chess problems into higher dimensions.
    \item[2)] \textbf{Higher dimensional permutations with additional constraints}\\
    Second, the $n$-queens problem in 3D can be described as a Latin square with additional constraints. This will be described later on in detail, but it has a similar visual intuition as it is demonstrated by Fig.\ref{fig:sudoku} for sudoku (which can also be described as a Latin square with additional constraints).
    \item[3)] \textbf{Maximal independent sets on grid graphs}\\
    Lastly, we may interpret the chessboard for a given chess piece as a graph. Two nodes, i.e., squares are connected through an edge if the chess piece can move from one to the other. This is called the queen's graph in the case of the queen. The $n$-queens problem is equivalent to the independent set problem on the queen's graph. This is expanded on in \ref{subsubsection:queensgraph}.
\end{itemize}
While (1) is mainly helpful for an intuition of the problem and is eponymous to its name, (2) and (3) may already hint at suitable methods for solving. 
\begin{figure}[H]
    \centering
    \includegraphics[width=0.8\linewidth]{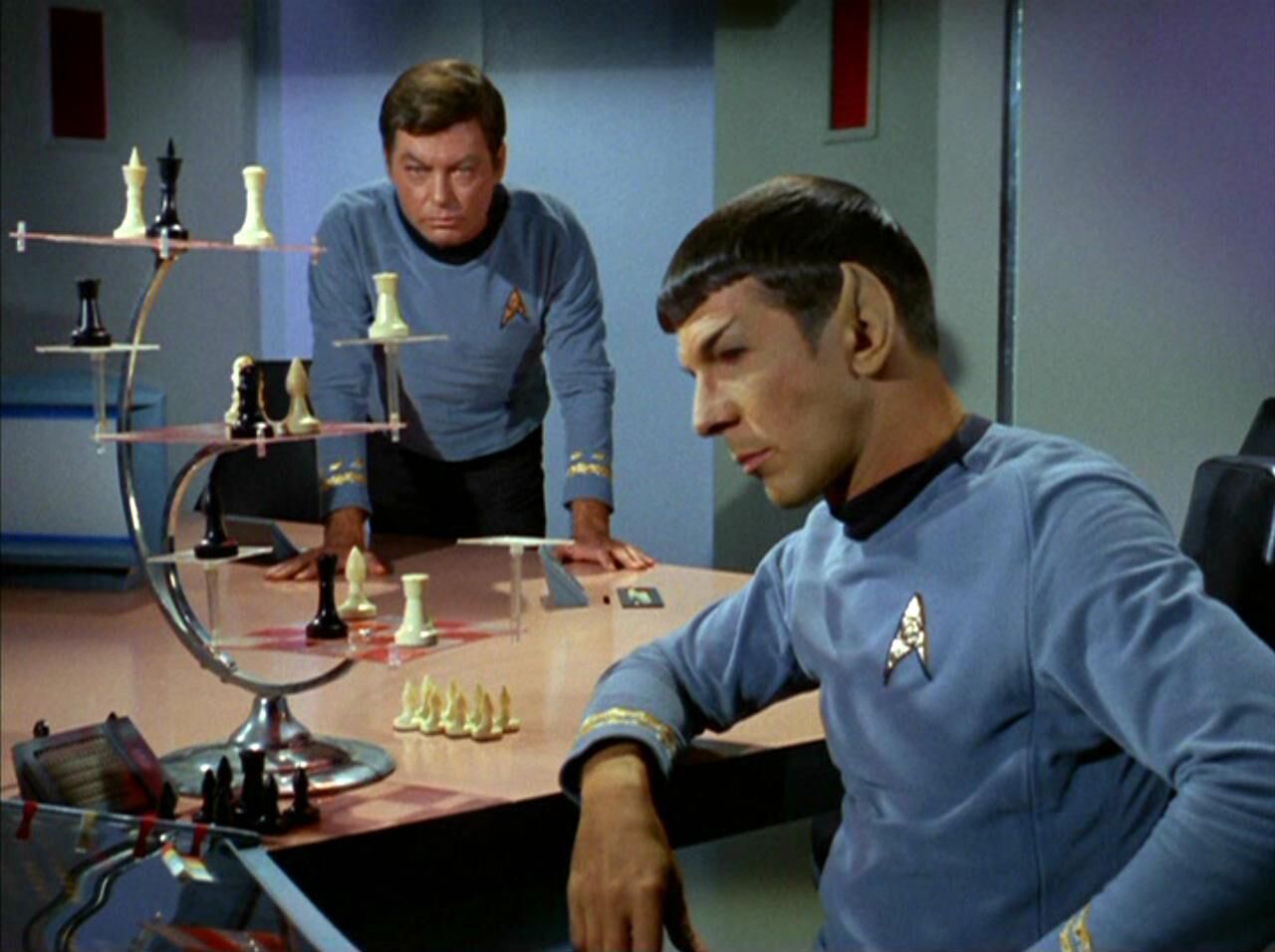}
    \caption{Three dimensional chess as shown in Star Trek \cite{trek1966original}}
    \label{fig:star_trek}
\end{figure}

We differentiate between varying approaches to the $n$-queens problem, some of which may benefit from one another. The most straightforward approach is brute force search methods, possibly abusing the symmetries of the problem. Multiple constructions exist that provide solutions or lower bounds to the problem in both two and higher dimensions. Additionally, there are several interesting insights into the problem through combinatorics. Our approach to the problem is using integer programming. This has several advantages:
\begin{itemize}
    \item[a)] we can prove the maximality of solutions
    \item[b)] we may incorporate and benefit from insights of other solution methods
    \item[c)] we are able to solve instances that are potentially out of reach for other methods
\end{itemize}
Our primary focus is the IP formulation and possible strengthenings, as well as using heuristics for warmstarts. This is complemented by lower bounds for the instances that we cannot solve as of today.

\newpage 
\subsection{Contribution}
We provide a comprehensive overview of existing literature and methods for solving the $n$-queens problem in higher dimensions. Our starting point is \textit{"A survey of known results and research areas for n-queens"} \cite{bell2008results} from 2009, which we extend with recent results and our methods regarding the variation of the problem in higher dimensions and the solving of instances. While this variation of the problem is not unknown, literature on the topic is partially scattered or unpublished, which is best illustrated by the fact that several results have been independently (re-)discovered by different authors over the years. Thus, we consider collecting and connecting the existing results and solving techniques necessary.  
We describe and briefly compare the varying approaches to the problem and draw the connection between several related problems.

Second, we provide an integer programming formulation for the $n$-queens problem in higher dimensions. We discuss and compare several possible strengthenings of the formulation.
We note that the recent preprint \textit{Complexity of Chess Domination Problems} \cite{langlois2022complexity} by Alexis Langlois-R{\'e}millard,  Christoph M{\"u}{\ss}ig  and {\'E}rika R{\'o}ldan was published during the work on this thesis. Among their results regarding complexity of domination problems on polyminoes, they propose an IP formulation for the $n$-queens problem in higher dimensions. In order to incorporate their results, we compare our methods to their formulation and achieve a speedup between $15.5\times -71.2 \times $ less computational time over all instances.\footnote{Measuring this speedup, we compare average computational time, gurobi work, and nodes for their method and our best-performing method and limit ourselves to instances that initially took more than 10s to solve in order to minimize noise. See section \ref{sec_comp_results} in detail.}.

Third, we are able to verify the maximality of existing certificates to the problem for different $n,d$.

Fourth, we present a heuristic method to derive lower bounds for instances still out of reach today.

Lastly, we give several preliminary results for enumeration and the density of solutions to the $n$-queens problem in higher dimensions and point towards insights regarding related problems.

\newpage
\subsection{Notation and Definitions}
Notation follows \cite{gent2017complexity} and \cite{Bell2009} in large parts. There are several cases of varying names or notation for the same problem, property, or result in different sources. In those cases, we will list and define them once and proceed with a choice that allows for a coherent and descriptive notation.

\subsubsection{Board}\label{subsubsection_board}
We consider \textbf{boards} of size $n$ in dimension $d$, where integer $n$ is the number of squares in each dimension and integer $d$ is the number of dimensions the board extends to. We will describe a board as $(n,d)$-board, if not otherwise apparent from the context. As an example, following this notation, a conventional chess board would be called an $(8,2)$-board.

A generalization of the $n$-queens and related problems to rectangular boards or polyominoes is possible; see \cite{langlois2022complexity}. Further variants of the problem identify opposite faces of the board, creating different topologies such as $n$-queens on a Moebius strip or a torus. For an overview see \cite{bell2008results} and \cite{Bell2009}. 
In particular, the $n$-queens problem on the modular $n^d$ board \cite{Nudelman1995} proves useful for the $n$-queens problem in higher dimensions and will be expanded on in the following section. 

We will refer to individual \textbf{squares} on such a board of dimension $d$ using a tuple $(a_1, a_2, ..., a_d)$, where each element marks the position in the respective dimension.

The \text{$i$-th} \textbf{layer} \text{in the $j$-th dimension} of a board is given by the set of squares, for which the $a_j = i$. For $d=2$, the two possible layers are simple rows and columns. For $d=3$, the layers are two-dimensional boards themselves. The concept is introduced, as it naturally leads to a possible algorithm to construct solutions for the $n$-queens problem in higher dimensions. Layers of the $(n,d)$-board correspond to $(n,d-1)$-boards.\\
\begin{figure}[H]
\setkeys{Gin}{width=0.3\linewidth}
\includegraphics{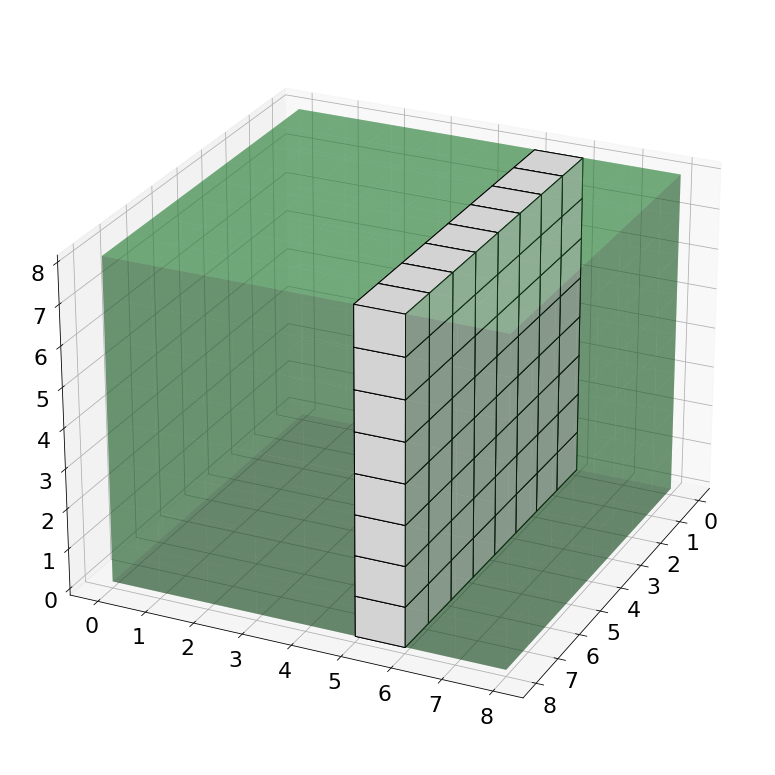}
\hfill
\includegraphics{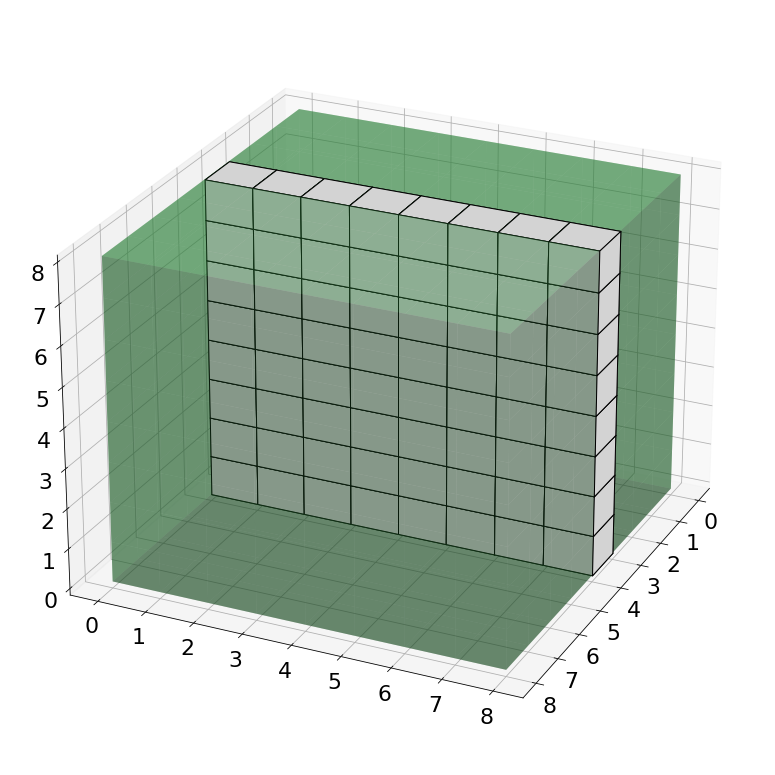}
\hfill
\includegraphics{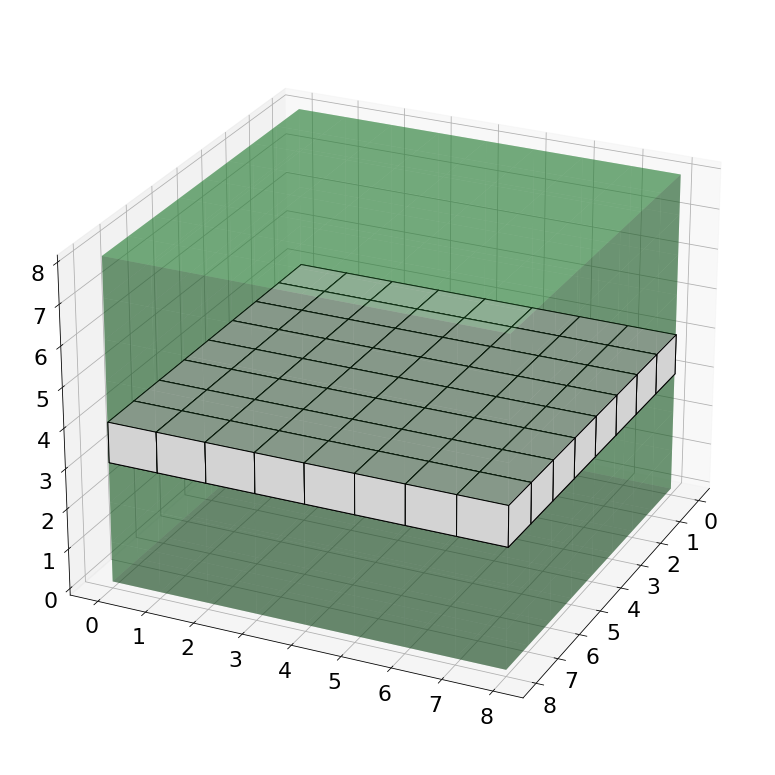}
\caption{Example of one layer in each dimension of the $(8,3)$-board}
\label{tab:vis_layer}
\end{figure}
We point out that developing additional variants of the $n$-queens problem is possible using boards that do not follow a rectangular grid, such as hexagonal chess. A generalization to other grid graphs (as we may identify the chessboard as a square grid graph) is motivated naturally by the formulation of the $n$-queens problem as a maximal independent set problem on grid graphs. The queen graph is later touched on in \ref{subsubsection:queensgraph}. These variants may also be formulated so that they correspond to a version of the Chv\'atal's art gallery problem (c.f. \cite{alpert2021art}), provided a definition of a queens vision (which equates to their legal moves) on the chosen grid.

\begin{figure}[H]
    \centering
    \includegraphics[width=0.4\linewidth]{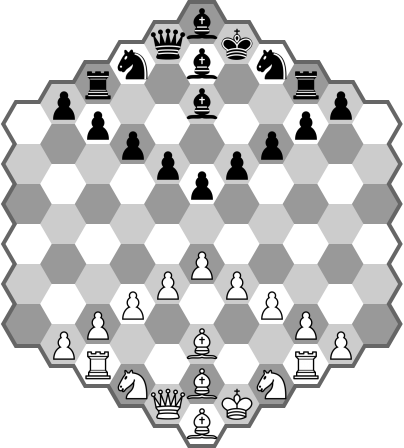}
    \caption{Hexagonal chess (Gli\'{n}ski's variant)}
    \label{fig:vis_hexchess}
\end{figure}

\subsubsection{Pieces}
We define chess pieces through their moves, i.e., lines of attack on a board. To avoid confusion in the context of the minimum dominating set problem, any piece is always attacking its own square. Definitions build upon \cite{gent2017complexity}.

Similar to boards, it is possible to come up with numerous chess pieces and examine the resulting maximal independent set problem or other related problems. The reason why queens are particularly interesting is that they present the first step in complexity above rooks. For rooks, the maximal independent sets directly correspond to permutation matrices and Latin squares. Therefore, the cardinality of those maximal independent sets on the $(n,d)$-board is trivially $n^{d-1}$. Enumeration of solutions may still be non-trivial; in fact, it has only been solved recently for such permutation matrices \cite{keevash2018existence}, which we will expand on in section \ref{section_enumeration_general_d}. For queens, however, we cannot always place $n^{d-1}$ in a non-attacking configuration on the $(n,d)$-board. That is why we are concerned with finding such maximal independent sets of queens.

\begin{definition}[Rook] A rook \textbf{r} on a $(n,d)$-board is a tuple of integers $(a_1, a_2, ... , a_d)$ with $1 \leq a_i \leq n$ for $i \in \{1,2,...,d\}$.
\begin{itemize}
    \item The \textbf{size of \textbf{$r$}}, written $\norm{r}$ is defined by $\norm{r}:=\text{max}(a_1, a_2, ..., a_d)$. \\
    We say that $r$ fits a board of size $n$ iff $\norm{r}\leq n$. We extend this to sets of rooks, so that $\norm{R}:=\text{max}_{r\in R}(\norm{r})$. Any set of rooks is pairwise distinct. We write $|R|$ for the number of elements in $R$.
    \item For any pair of rooks $r_1 = (a_1, a_2, ... , a_d)$ and $r_2 = (b_1, b_2, ... , b_d)$ with $r_1 \neq r_2$, we say that \textbf{$r_1$ attacks $r_2$} if there exists some $m\in \mathbb{Z}$ and $\epsilon = (0,..., \epsilon_i, ... , 0)$ with one non-zero entry $\epsilon_i \in \{-1,0,1\}$ \textit{such that}
    \begin{align}
        a_i = \epsilon_i \cdot m + b_i \; \; \forall i \in \{1,2,...,d\}
    \end{align}
\end{itemize}
\end{definition}
In other words, a pair of rooks $r_1 = (a_1, a_2, ... , a_d)$ and $r_2 = (b_1, b_2, ... , b_d)$ attack each other if $a_i = b_i$ for all but one $i \in 1,...,d$. The above might not be the most compact definition of a rook's movement in higher dimensions, but it introduces us to the concept of hyperplanes of attacks described by the vectors $\epsilon$ and naturally leads to the following definition for queens.

\begin{definition}[Queen] A queen \textbf{q} on a $(n,d)$-board is a tuple of integers $(a_1, a_2, ... , a_d)$ with $1 \leq a_i \leq n$ for $i \in \{1,2,...,d\}$.
\begin{itemize}
    \item The \textbf{size of \textbf{$q$}}, written $\norm{q}$ is defined by $\norm{q}:=\text{max}(a_1, a_2, ..., a_d)$. \\
    We say that $q$ fits a board of size $n$ iff $\norm{q}\leq n$. We extend this to sets of queens, so that $\norm{Q}:=\text{max}_{q\in Q}(\norm{q})$. Any set of queens is pairwise distinct. We write $|Q|$ for the number of elements in $Q$.
    \item For any pair of queens $q_1 = (a_1, a_2, ... , a_d)$ and $q_2 = (b_1, b_2, ... , b_d)$ with $q_1 \neq q_2$, we say that \textbf{$q_1$ attacks $q_2$} if there exists some $m\in \mathbb{Z}$ and nonzero  $\epsilon = (\epsilon_1, \epsilon_2, ... , \epsilon_d)$ with $\epsilon_i \in \{-1,0,1\}$ such that
    \begin{align}
        a_i = \epsilon_i \cdot m + b_i \; \; \forall i \in \{1,2,...,d\}
    \end{align}
\end{itemize}
\end{definition}

The attacking relation is symmetric for rooks and queens. Note that the relation is not necessarily symmetric for all chess pieces and boards. This is the reason why the given definition differentiates between \textbf{$p_1$} attacking \textbf{$p_2$} and \textbf{$p_2$} attacking \textbf{$p_1$}. We will call a pair of queens for which neither \textbf{$q_1$} is attacking \textbf{$q_2$} nor \textbf{$q_2$} is attacking \textbf{$q_1$} \textbf{mutually non-attacking}.

The $\epsilon \cdot m$ describe the hyperplanes a queen can attack in for general dimensions. We will also refer to a piece $p$ attacking a set of squares if a piece placed on each of those squares would be attacked by $p$. In the context of chess domination problems, a square that is attacked by at least one piece is considered \textbf{dominated}.

\begin{figure}[H]
\setkeys{Gin}{width=0.45\linewidth}
\includegraphics{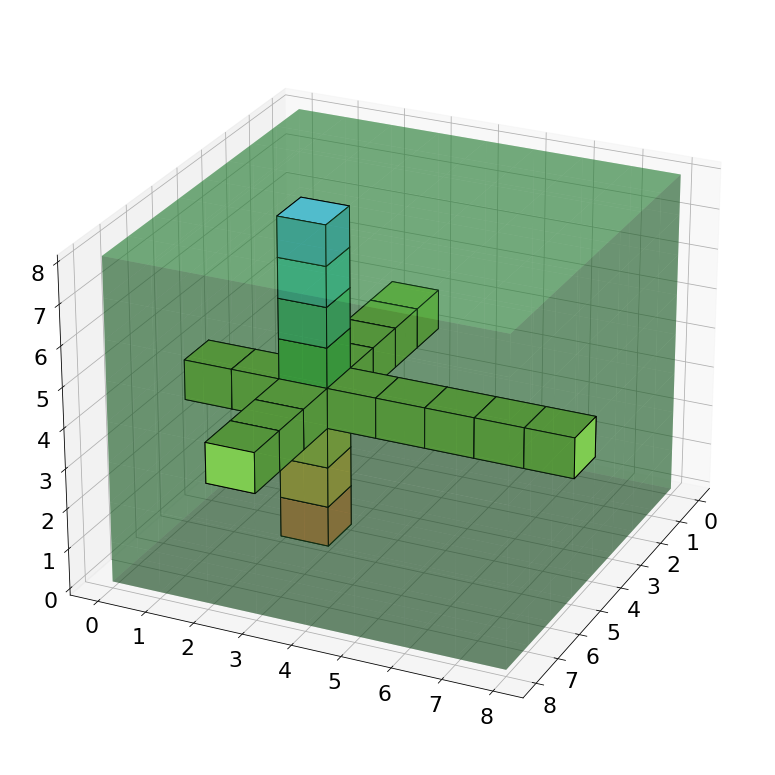}
\hfill
\includegraphics{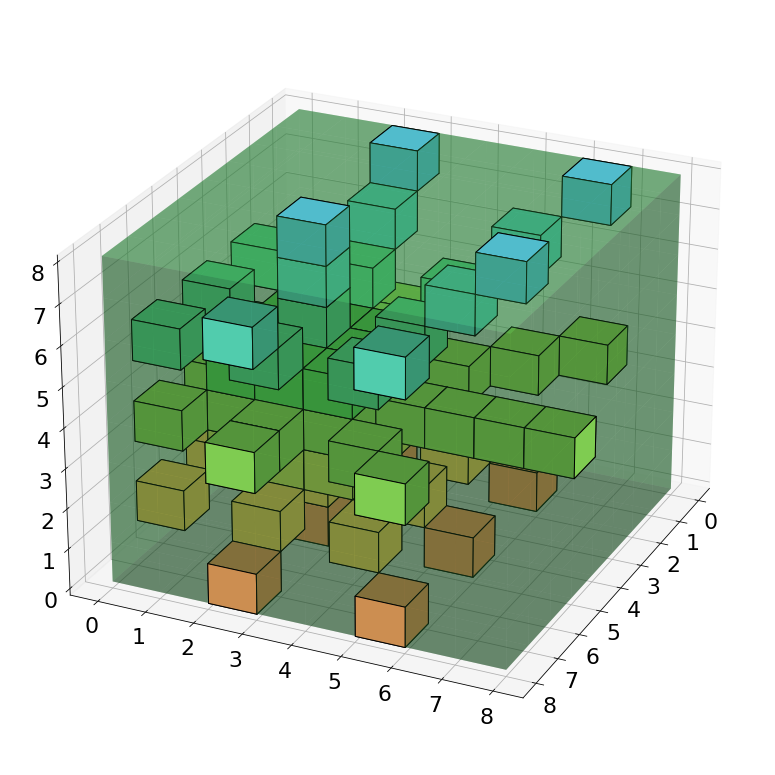}
\caption{Dominated squares by a rook (left) and a queen (right) placed on the square $(4,2,3)$ on the $(8,3)$-board}
\label{fig:vis_rook_attack}
\end{figure}

\newpage
\subsection{Problem Definitions}
In the following, we define the $(n,d)$-queens problem as a generalization of the $n$-queens problem and a selection of variants as decision problems. This allows us to formulate corresponding integer program formulations as feasibility problems and provides the necessary foundation to discuss complexity (c.f. \cite{gent2017complexity} \cite{langlois2022complexity}). 
Notation follows \cite{gent2017complexity} in large parts.\\

\textbf{Problem 1} ($(n,d)$-queens). \vspace{0.2cm} \\
\begin{tabular}{rll}
    \textsc{Problem:} & \textit{A $(n,d)$-board,} $n,d \in \mathbb{N}^{+}$\vspace{0.2cm} \\
    \textsc{Solution:} & \textit{A set $Q$ of queens such that: \vspace{-0.2cm}}\\
\end{tabular}
\begin{itemize}
    \item[i)] $|Q|=n^{d-1}$
    \item[ii)] $\norm{Q} \leq n $
    \item[iii)] \textit{For any distinct pair of queens} $q_1$, $q_2$ $\in Q$, $q_1$ \textit{does not attack} $q_2$
\end{itemize}
This is a generalization of the definition \cite{gent2017complexity} of the standard $n$-queens problem to higher dimensions. In particular, the two-dimensional case $d=2$ for which (i) $|Q|=n$ is commonly known and referred to as the standard $n$-queens problem.\\

\textbf{Problem 2} (modular $(n,d)$-queens). \vspace{0.2cm} \\
\begin{tabular}{rll}
    \textsc{Problem:} & \textit{A $(n,d)$-board,} $n,d \in \mathbb{N}^{+}$\vspace{0.2cm} \\
    \textsc{Solution:} & \textit{A set $Q$ of queens such that: \vspace{-0.2cm}}\\
\end{tabular}
\begin{itemize}
    \item[i)] $|Q|=n^{d-1}$
    \item[ii)] $\norm{Q} \leq n $
    \item[iii)] \textit{For any distinct pair of queens} 
    $q_1 = (a_1, a_2, ... , a_d)$\textit{,} $q_2 = (b_1, b_2, ... , b_d)$  $\in Q$
    \textit{and for any non-zero} $\epsilon = (\epsilon_1, \epsilon_2, ... , \epsilon_d)$
    \begin{align*}
        \sum^d_{i=1}(\epsilon_i \cdot a_i) \neq \bigg( \sum^d_{i=1}(\epsilon_i \cdot b_i) \bigg) \text{mod n}
    \end{align*}
\end{itemize}

The modular $n$-queens problem, first introduced by Polya, \cite{polya1921uber} 
is a variation of the $n$-queens problem where opposite sides of the board are identified, forming a torus. This generalizes to higher dimensions $d\geq3$ as the $n$-queens problem on the $d$-torus.

\newpage
\begin{figure}[H]
    \centering
    \includegraphics[width=0.5\linewidth]{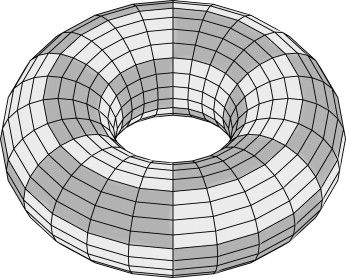}
    \caption{Queens on a donut \cite{bell2008results}}
    \label{fig:my_label}
\end{figure}

\begin{proposition}
    A solution to the modular $(n,d)$-queens problem is also a solution to the $(n,d)$-queens problem.
\end{proposition}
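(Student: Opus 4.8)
The plan is to observe first that conditions (i) and (ii) are literally the same in Problem 1 and Problem 2, so the only content of the statement is that the modular non-attacking condition (iii) implies the ordinary one. I would prove this by contraposition: assuming that some distinct pair $q_1=(a_1,\dots,a_d)$, $q_2=(b_1,\dots,b_d)\in Q$ attacks in the ordinary sense, I will exhibit a single nonzero direction $\epsilon'$ that witnesses a modular collision, so that $Q$ cannot satisfy the modular condition (iii). By the definition of the queen's attack there are $m\in\mathbb{Z}$ and a nonzero $\epsilon\in\{-1,0,1\}^d$ with $a_i-b_i=\epsilon_i m$ for every $i$; since $q_1\neq q_2$ this forces $m\neq 0$ (otherwise all coordinates would agree), and $\epsilon\neq 0$ by assumption.

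The key reduction is to translate ``modular collision'' into a statement about the vectors $\epsilon,\epsilon'$. For any candidate direction $\epsilon'$ one computes $\sum_i \epsilon'_i a_i-\sum_i \epsilon'_i b_i=\sum_i \epsilon'_i(a_i-b_i)=m\sum_i \epsilon'_i\epsilon_i$. Hence a modular violation for the pair $(q_1,q_2)$ is produced by any nonzero $\epsilon'\in\{-1,0,1\}^d$ that is orthogonal to $\epsilon$, i.e. with $\sum_i \epsilon'_i\epsilon_i=0$, because then the two modular sums are not merely congruent but exactly equal. Note that the naive choice $\epsilon'=\epsilon$ does not work: it yields $m\sum_i\epsilon_i^2=mk$, where $k$ is the number of nonzero entries of $\epsilon$, which need not vanish modulo $n$. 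So the heart of the argument is the combinatorial fact that every nonzero $\epsilon\in\{-1,0,1\}^d$ with $d\geq 2$ admits a nonzero orthogonal companion in $\{-1,0,1\}^d$.

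I would prove that fact by a short case split. If $\epsilon$ has a zero coordinate $k$, take $\epsilon'$ to be the $k$-th unit vector; then $\sum_i\epsilon'_i\epsilon_i=\epsilon_k=0$, and geometrically this is exactly the rook/single-axis case, where $q_1$ and $q_2$ already share their $k$-th coordinate. If instead every $\epsilon_i\in\{-1,1\}$, pick two coordinates (possible since $d\geq 2$) and let $\epsilon'$ swap them with one sign flip, e.g. $\epsilon'_1=\epsilon_2$, $\epsilon'_2=-\epsilon_1$ and $\epsilon'_i=0$ otherwise, so that $\sum_i\epsilon'_i\epsilon_i=\epsilon_1\epsilon_2-\epsilon_2\epsilon_1=0$. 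In either case $\epsilon'$ is nonzero with entries in $\{-1,0,1\}$ and orthogonal to $\epsilon$, so the distinct pair $(q_1,q_2)$ violates the modular condition (iii), as required. The degenerate dimension $d=1$ needs no separate treatment: there $|Q|=n^{0}=1$, so no distinct pair exists and both versions of (iii) hold vacuously.

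The step I expect to be the genuine obstacle is precisely this orthogonal-companion construction, and in particular noticing that the single-axis (``rook'') attack must be handled separately: reusing the attacking direction $\epsilon$ fails, and one must instead exploit a coordinate on which the two queens agree. Once the existence of a small orthogonal $\epsilon'$ is secured, everything else is bookkeeping, and the fact that the resulting modular sums come out exactly equal, not merely congruent, makes the contradiction with condition (iii) immediate.
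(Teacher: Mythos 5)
Your proof is correct, and it is a genuinely more careful argument than the one the paper gives. The paper disposes of the proposition in a single sentence: the set of squares attacked by a queen on the ordinary board is contained in the set it attacks on the modular board, so any modularly non-attacking configuration is ordinarily non-attacking (the same idea is then abstracted in Proposition \ref{prop_rooks_contained_in_queens}). That containment is geometrically plausible but is never checked against the actual wording of Problem~2, whose condition (iii) is indexed by linear functionals $\sum_i \epsilon_i x_i \bmod n$ rather than by directions of lines; your proof supplies exactly the missing verification. The step you isolate as the real content --- that an ordinary attack along direction $\epsilon$ only becomes a modular collision after passing to a nonzero $\epsilon' \in \{-1,0,1\}^d$ orthogonal to $\epsilon$, and that such an $\epsilon'$ always exists (a unit vector on a zero coordinate of $\epsilon$, or a signed transposition of two coordinates when $\epsilon$ has no zero entry) --- is precisely what is hidden inside the paper's containment claim, and your remark that the naive choice $\epsilon'=\epsilon$ gives $mk$ and therefore fails is a correct and worthwhile sanity check. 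What the paper's phrasing buys is brevity and a reusable monotonicity intuition about pieces with nested attack sets; what yours buys is an actual proof against the stated definitions, including the rook-type case $\epsilon_k=0$, the exact (not merely modular) equality of the two sums for your chosen $\epsilon'$, and the vacuous case $d=1$.
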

This follows from the fact that the set of attacked squares for any queen in the $n$-queens problem is contained in the set of attacked squares of a queen on the same square on the modular board. Therefore any mutually non-attacking configuration on the modular board is also a mutually non-attacking configuration on the standard board. Analog arguments can be made for certain sets of mutually non-attacking chess pieces, for which the move-set of one piece is contained in the move-set of another (as is the case with queen and superqueen or bishop, rook, and queen).

\begin{proposition} \label{prop_rooks_contained_in_queens}
    For a pair of pieces $p_1$ and $p_2$ for which the set of dominated squares of $p_2$ is contained in the set of dominated squares of $p_1$, a solution to the $(n,d)$-$p_1$ problem is also a solution to the $(n,d)$-$p_2$ problem.
\end{proposition}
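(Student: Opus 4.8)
The plan is to observe that a configuration $Q$ is specified purely by the positions of its pieces, so the first two defining conditions of a solution — the cardinality $|Q| = n^{d-1}$ and the fitting condition $\norm{Q} \leq n$ — make no reference to the piece type and are therefore satisfied for $p_2$ exactly when they are satisfied for $p_1$. Hence the entire statement reduces to the non-attacking condition: I would show that if $Q$ is mutually non-attacking with respect to $p_1$, then it is mutually non-attacking with respect to $p_2$, and conclude that the $p_1$-solution $Q$ is a $p_2$-solution.

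First I would fix the reading of the hypothesis: for every square $s$ on the $(n,d)$-board, the set $\mathrm{dom}_{p_2}(s)$ of squares dominated by $p_2$ placed on $s$ is contained in the set $\mathrm{dom}_{p_1}(s)$ of squares dominated by $p_1$ placed on $s$. I would also recall the convention that every piece dominates its own square, so that for two \emph{distinct} pieces the notions ``$p$ on $s$ attacks $s'$'' and ``$s' \in \mathrm{dom}_p(s)$'' coincide, the self-square being irrelevant once $s \neq s'$.

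The core step is a contrapositive argument that also handles the possible asymmetry of the attack relation. Suppose $Q$ fails to be mutually non-attacking under $p_2$; then there is a distinct pair $q_1, q_2 \in Q$ with, say, $p_2$ on $q_1$ attacking $q_2$, i.e. $q_2 \in \mathrm{dom}_{p_2}(q_1)$. Applying the containment hypothesis at $s = q_1$ gives $q_2 \in \mathrm{dom}_{p_1}(q_1)$, so $p_1$ on $q_1$ attacks $q_2$, contradicting that $Q$ is mutually non-attacking under $p_1$. Running the same argument with the roles of $q_1$ and $q_2$ exchanged covers the other direction of attack, so both directions are dealt with and no symmetry of the attack relation is assumed. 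This mirrors the containment argument already used for the modular-versus-standard proposition above.

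I expect the only genuine subtlety — rather than a real obstacle — to be bookkeeping around the definitions: being explicit that the containment must hold placement-by-placement (for each square $s$), and that ``dominated'' versus ``attacking'' differ only by the self-square, which drops out because the pairs considered are distinct. Everything else follows immediately from the positional nature of conditions (i) and (ii), so the proof should be short.
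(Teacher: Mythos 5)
Your argument is correct and is essentially the same containment argument the paper uses (the paper gives no separate proof for this proposition, relying on the informal "set of attacked squares is contained" reasoning stated for the preceding modular-queens proposition, which you explicitly mirror). Your added care about the placement-by-placement reading of the hypothesis, the positional nature of conditions (i) and (ii), and the possible asymmetry of the attack relation only makes the argument more complete.
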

\begin{figure}[H]
    \centering
    \includegraphics[width=0.4\linewidth]{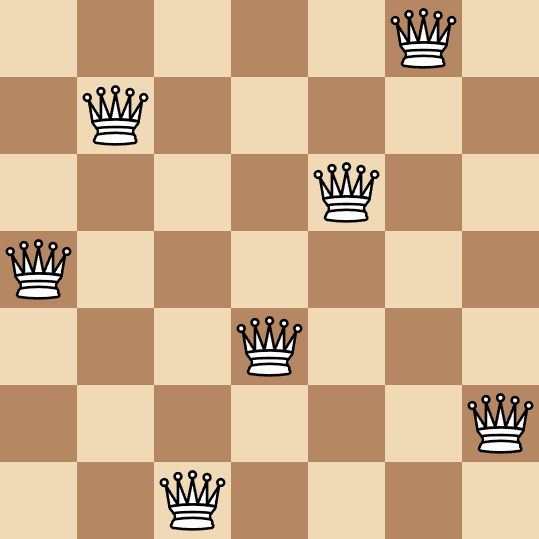}
    \caption{solution to the modular $(7,2)$-queens problem}
    \label{fig:my_label}
\end{figure} 

\newpage
While the standard $(n,2)$-queens problem has a solution for all $n \geq 3$ (see Theorem \ref{thm_pauls}), a comparable result is not known for the $(n,d)$-queens problem for $d \geq 3$. For $n, d$, for which no solution exists, we are interested in finding sets of mutually non-attacking queens with maximal cardinality. This motivates the following definition of the \textit{partial} $(n,d)$-queens problem.\\

\textbf{Problem 3} (partial $(n,d)$-queens). \vspace{0.2cm} \\
\begin{tabular}{rll}
    \textsc{Problem:} & \textit{A $(n,d)$-board,} $k\leq n^{d-1}$, $n,d,k \in \mathbb{N}^{+}$\vspace{0.2cm} \\
    \textsc{Solution:} & \textit{A set $Q$ of queens such that: \vspace{-0.2cm}}\\
\end{tabular}
\begin{itemize}
    \item[i)] $|Q|=k$
    \item[ii)] $\norm{Q} \leq n $
    \item[iii)] \textit{For any distinct pair of queens} $q_1$, $q_2$ $\in Q$, $q_1$ \textit{does not attack} $q_2$
\end{itemize}

The special case $k=n^{(d-1)}$ of the partial $(n,d)$-queens problem is the $(n,d)$-queens problem. For given $n, d$, the corresponding optimization problem asks to find the maximal $k$, for which a solution to the partial $(n,d)$-queens problem exists. We call such a solution a \textbf{maximal partial solution} \cite{bell2008results}. For a given board of size $n$ in dimension $d$, we write $Q_{max}(n,d)$ for such a solution and $|Q_{max}(n,d)|$ for its cardinality. We distinguish between the $(n,d)$-queens problem and the partial $(n,d)$-queens problem, as certain statements, methods, and enumeration techniques may differ or only apply for $k=n^{(d-1)}$.

The main focus of this thesis is to find and enumerate maximal partial solutions. We will further define a small selection of variants and related problems. These may serve to provide lower bounds to the partial $(n,d)$-queens problem. Others, such as the minimal independent dominating set problem, are well suited to be solved by the integer programming methods presented in section \ref{sectionip_formulation}.\\ 

\textbf{Problem 4} ($(n,d)$-queens completion). \vspace{0.2cm} \\
\begin{tabular}{rll}
    \textsc{Problem:}
    & \textit{A $(n,d)$-board,} $n,d \in \mathbb{N}^{+}$, 
     \textit{a set of mutually non-attacking } \\
    & \textit{queens } $Q^*$ \textit{with} $|Q^*| \leq n^{d-1}$, $\norm{Q^*} \leq n $ \vspace{0.2cm} \\
    \textsc{Solution:} & \textit{A set $Q$ of queens such that: \vspace{-0.2cm}}\\
\end{tabular}
\begin{itemize}
    \item[i)] $|Q|=n^{d-1}$
    \item[ii)] $\norm{Q} \leq n $
    \item[iii)] \textit{For any distinct pair of queens} ${q}_1$, ${q}_2$ $\in Q$, ${q}_1$ \textit{does not attack} ${q}_2$
    \item[iv)] $Q^* \subseteq Q$
\end{itemize}

Note that this definition differs from the definition given by \cite{gent2017complexity}, which allows the queens in $Q^*$ to be set on the same diagonal. 

\newpage
\begin{proposition}
    The $(n,d)$-queens completion for some given $Q^*$ has no solution if and only if there does not exists a solution $Q$ to the corresponding $(n,d)$-queens problem with $Q^* \subseteq Q$
\end{proposition}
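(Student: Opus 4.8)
The plan is to prove the logically equivalent positive statement — that the $(n,d)$-queens completion for $Q^*$ \emph{has} a solution if and only if there \emph{exists} a solution $Q$ to the $(n,d)$-queens problem with $Q^* \subseteq Q$ — and then negate both sides. Since a biconditional is preserved under simultaneous negation of its two sides, this suffices. The entire argument is a matter of unfolding the two problem definitions and comparing their constraint lists, so I would carry it out by direct inspection rather than by any combinatorial construction.

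The key step is to observe that conditions (i)--(iii) of the completion problem (Problem 4) are \emph{verbatim} conditions (i)--(iii) of the $(n,d)$-queens problem (Problem 1): both demand $|Q| = n^{d-1}$, $\norm{Q} \leq n$, and that every distinct pair of queens in $Q$ be mutually non-attacking. Hence a set of queens $Q$ satisfies (i)--(iii) of Problem 4 exactly when $Q$ is a solution of Problem 1. The sole remaining requirement distinguishing the completion problem is condition (iv), namely $Q^* \subseteq Q$. Combining these, a set $Q$ is a solution to the completion problem for $Q^*$ if and only if $Q$ is a solution to the $(n,d)$-queens problem and additionally contains $Q^*$; that is, the two solution sets coincide.

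From this equality of solution sets the two existential statements coincide, which is precisely the positive form, and negating each side yields the asserted biconditional for non-existence. The only point requiring care is that the hypotheses placed on the \emph{input} $Q^*$ — that it be mutually non-attacking with $|Q^*| \leq n^{d-1}$ and $\norm{Q^*} \leq n$ — are well-posedness conditions on the instance and impose nothing further on the candidate $Q$ beyond (iv); indeed, any $Q \supseteq Q^*$ satisfying (i)--(iii) automatically respects them. There is no substantive obstacle: the statement is definitional, and the main thing to verify is simply that the constraint lists of the two problems agree, so that no hidden combinatorial content is smuggled into the equivalence.
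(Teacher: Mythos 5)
Your proposal is correct: the equivalence is purely definitional, since conditions (i)--(iii) of the completion problem coincide with those of the $(n,d)$-queens problem and (iv) is exactly the containment $Q^* \subseteq Q$, so the two solution sets agree and negating both sides of the existence statement gives the claim. The paper offers no explicit proof and treats the proposition as immediate in just this way, so your argument matches its (implicit) reasoning.
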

In particular, for $Q^*$ with  $|Q^*| = 1$ this means that it is possible to check whether a certain square may be occupied without having to check all solutions to the $(n,d)$-queens problem exhaustively.\\
Again, for instances where the $(n,d)$-queens problem does not have a solution, we are be interested in maximal partial solutions:\\

\textbf{Problem 5} (partial $(n,d)$-queens completion). \vspace{0.2cm} \\
\begin{tabular}{rll}
    \textsc{Problem:} 
    & \textit{A $(n,d)$-board,} $k\leq n^{d-1}$, $n,d,k \in \mathbb{N}^{+}$, 
     \textit{a set of mutually } \\
    & \textit{non-attacking queens } $Q^*$ \textit{with} $|Q^*| \leq n^{d-1}$, $\norm{Q^*} \leq n $ \vspace{0.2cm} \\
    \textsc{Solution:} & \textit{A set $Q$ of queens such that: \vspace{-0.2cm}}\\
\end{tabular}
\begin{itemize}
    \item[i)] $|Q|=k$
    \item[ii)] $\norm{Q} \leq n $
    \item[iii)] \textit{For any distinct pair of queens} ${q}_1$, ${q}_2$ $\in Q$, ${q}_1$ \textit{does not attack} ${q}_2$
    \item[iv)] $Q^* \subseteq Q$
\end{itemize}

As discussed in section \ref{subsubsection:queensgraph} 
, the partial $(n,d)$-queens problem (or to be precise its respective optimization problem) can be interpreted and formulated as maximal independent set on the $(n,d)$-queen graph. 
Naturally one may also ask, what the minimum dominating sets of queens for such a given board are. This problem in turn has a visual interpretation as a variant of Chv\'{a}tal's art gallery problem \cite{alpert2021art}. Hence we define the corresponding decision problem to the minimal $n$-queen domination:\\ 

\textbf{Problem 6} ($(n,d)$-queens domination). \vspace{0.2cm} \\
\begin{tabular}{rll}
    \textsc{Problem:} 
    & \textit{A $(n,d)$-board,} $k\leq n^{d-1}$, $n,d,k \in \mathbb{N}^{+}$ \vspace{0.2cm} \\
    \textsc{Solution:} & \textit{A set $Q$ of queens such that: \vspace{-0.2cm}}\\
\end{tabular}
\begin{itemize}
    \item[i)] $|Q|=k$
    \item[ii)] $\norm{Q} \leq n $
    \item[iii)] \textit{Every square of the board is attacked by at least one queen} $q \in Q$
\end{itemize}

Analog definitions to problems 1,2,4,5, and 6 for the rook are straight forward.

\newpage
\subsubsection{Solutions}
\begin{definition}[regular solution \cite{Bell2009}]
A regular (or sometimes called linear) solution is a certificate for the $(n,d)$-queens problem, that can be constructed by a starting square $(s_1,s_2,...)$ and a fixed movement $(m_1,m_2,...)$ that places the $k$-th queen at $(s_1 + (k \cdot m_1) \text{ mod } n, s_2 + (k \cdot m_2) \text{ mod } n,...)$.
\end{definition}
Note that the existence of such regular solutions implies that the corresponding problem is solvable in polynomial time.\\ 

\begin{definition}[superimposable solutions \cite{Bell2009}]
A disjoint set of solutions to the $(n,d)$-queens problem is called superimposable.
\end{definition}
In other words, a set of superimposable solutions to the $(n,d)$-queens problem can be placed on the $(n,d)$-board without overlap. The cardinality of a superimposable set cannot be greater than $n$.\\

\begin{definition}[partial solution]
A set of queens $Q$ is called a partial solution on the $(n,d)$-board if:
\begin{itemize}
    \item[i)] $\norm{Q}\leq n $
    \item[ii)] \textit{For any distinct pair of queens} ${q}_1$, ${q}_2$ $\in Q$, ${q}_1$ \textit{does not attack} ${q}_2$
\end{itemize}
\end{definition}
This is a generalization of maximal partial solutions and will be used to describe the output of heuristics, as maximality might not be proven for such solutions.

We refer to \cite{Bell2009} who introduce further classes of solutions, such as \textit{symmetric solutions}, \textit{doubly symmetric solutions} and \textit{doubly periodic solutions}, which will not be of importance for this thesis but may be of use for further research on the partial modular $(n,d)$-queens problem and for the enumeration of regular solutions to the $(n,d)$-queens problem.

\newpage
\section{The Classical $(n,2)$-Queens Problem}
The following is a brief overview of selected theoretical results regarding the $(n,d)$-queens problem, focused on those relevant to solving and understanding the problem in higher dimensions. For a detailed overview of all known results on n-queens up until 2009, \cite{Bell2009} remains the most comprehensive source. 
\subsection{Existence and Construction of Solutions}
\begin{theorem}[Pauls \cite{pauls1874maximalproblem}]\label{thm_pauls}
    For all $n \geq 4$ the $(n,2)$-queens problem has a solution.
\end{theorem}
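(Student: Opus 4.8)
The plan is to prove the existence of a solution for all $n \geq 4$ by explicit construction, exhibiting concrete placements of $n$ mutually non-attacking queens on the $(n,2)$-board. Since the two-dimensional queens problem asks for a set $Q$ with $|Q| = n$, $\norm{Q} \leq n$, and no two queens sharing a row, column, or diagonal, it suffices to give, for each $n$, a permutation $\sigma$ of $\{1,\dots,n\}$ (placing one queen per row and column automatically, so the rook-conditions hold) such that the two diagonal conditions are also met. In terms of the attacking relation, writing queens as $(i,\sigma(i))$, the no-diagonal requirement translates into the two well-known conditions that $i + \sigma(i)$ are pairwise distinct and $i - \sigma(i)$ are pairwise distinct as $i$ ranges over $\{1,\dots,n\}$.

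My approach would follow the classical casework by residue of $n$ modulo $6$. First I would handle the generic cases where $n$ is not divisible by $2$ or $3$, i.e. $n \equiv 1, 5 \pmod 6$, using a single linear (regular) construction: place the $k$-th queen according to a fixed step vector, for instance $\sigma(i) \equiv 2i \pmod n$ (suitably shifted into $\{1,\dots,n\}$), and verify that this yields distinct sums and differences exactly when $\gcd(2,n) = \gcd(3,n)\cdots$ conditions hold — this is precisely where the coprimality with $6$ is needed, since the map must be a bijection and its sum/difference versions must also avoid collisions modulo $n$. For the remaining residues $n \equiv 0, 2, 3, 4 \pmod 6$, the naive linear placement fails, so I would patch the construction: one standard technique is to split the board and place queens on even columns in increasing order and odd columns in a second pass, or to start from a solution for a nearby coprime case and adjust a small number of queens by hand. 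The base cases $n = 4, 5$ would be checked directly by exhibiting explicit solutions.

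I expect the main obstacle to be the non-generic residues, especially $n \equiv 2, 3 \pmod 6$, where no single-step regular solution exists and one must either interleave two arithmetic progressions or perturb a handful of queens near the corners of the board. Verifying that such a patched configuration simultaneously satisfies both diagonal conditions — that the finitely many modified queens do not collide with the bulk construction on either diagonal — is the delicate bookkeeping step, and it is where the proof genuinely uses $n \geq 4$ rather than merely $n \geq 1$ (the cases $n = 2, 3$ are the only true obstructions, being genuinely unsolvable). An alternative cleaner route, which I would keep in reserve, is to cite the fact that superimposable or modular solutions exist whenever $\gcd(n,6) = 1$ (so the modular $(n,2)$-queens problem is solvable by Proposition-style reasoning, and by the earlier proposition any modular solution descends to a standard solution), and then extend to all $n \geq 4$ by appending or modifying queens; this reduces the hard part to the combinatorial patching for the four bad residue classes.
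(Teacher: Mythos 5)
Your overall strategy is the same as the paper's: the paper follows Pauls/Hoffman \cite{Hoffman1969} and proves the theorem by explicit construction with casework governed by $n \bmod 6$ --- a linear (regular) placement in the generic case, a modified placement for the bad even residue, and the odd case reduced to the even case $n-1$ by appending a queen at $(n,n)$. Your translation to permutations with pairwise distinct $i+\sigma(i)$ and $i-\sigma(i)$, your treatment of $\gcd(n,6)=1$ via $\sigma(i)\equiv 2i \pmod n$ (sums $3i$ and differences $-i$ are injective mod $n$ exactly when $3\nmid n$ and $2\nmid n$, and modular distinctness implies integer distinctness), and your base cases $n=4,5$ all line up with what the paper does.

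The gap is that the one case carrying the real content of the theorem is never actually constructed. For $n\equiv 2 \pmod 6$ (and, via the odd reduction, $n\equiv 3\pmod 6$) you only say one should ``interleave two arithmetic progressions or perturb a handful of queens near the corners'' and you yourself flag the verification as the delicate step --- but you do not exhibit the placement or check the two diagonal conditions for it, so the proof is incomplete precisely where every naive construction fails. The paper's case (B) supplies the missing ingredient explicitly: for $n$ even with $n\not\equiv 0\pmod 6$, place queens at $\bigl(j,\,1+((2(j-1)+n/2-1) \bmod n)\bigr)$ and $\bigl(n/2+j,\,n-((2(j-1)+n/2-1) \bmod n)\bigr)$ for $j=1,\dots,n/2$, i.e.\ a globally shifted progression rather than a local corner repair. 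Your reserve route (descend from a modular solution when $\gcd(n,6)=1$ and ``extend or modify'' otherwise) has the same hole: extending a modular solution across the four bad residue classes is exactly the combinatorial work you defer. To close the argument you must either write down and verify a concrete placement for $n\equiv 2\pmod 6$ or cite one.
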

Pauls provided the first proof, listing construction methods for all residue classes of $(n \text{ mod } 6)$. Throughout the history of the problem, a large number of different authors have given different proofs for various constructions. An overview of all construction techniques is expanded on in \cite{Bell2009}. The often cited construction of \cite{Hoffman1969} (c.f. \cite{Bernhardsson1991}) distinguishes between three cases:
\begin{itemize}
    \item[(A)] $n$ even, $n \neq 6k + 2$: Place queens on: \vspace{0.1cm}\\
    $(j,2j)$\\
    $(n/2+j,2j-1)$, for $j = 1,...,n/2$
    \item[(B)] $n$ even, $n \neq 6k$: Place queens on: \vspace{0.1cm}\\
    $(j,1+(2(j-1)+n/2-1 \text{ mod } n))$\\
    $(m+1.j, n-(2(j-1)+n/2-1 \text{ mod } n))$, for $j = 1,...,n/2$
    \item[(C)] $n$ odd: Use suitable A or B for $(n-1)$ and add one queen on (n,n)
\end{itemize}

The resulting solutions for even $n$ are regular solutions. Solutions for odd $n$ are not regular by definition, but as they are constructed by simply extending a regular solution, the same argument regarding complexity holds. Thus, finding one single certificate for the $(n,2)$-queens problem is trivial; however, finding all solutions for given $n$ is not.
\begin{figure}[H]
    \centering
    \includegraphics[width=0.9\linewidth]{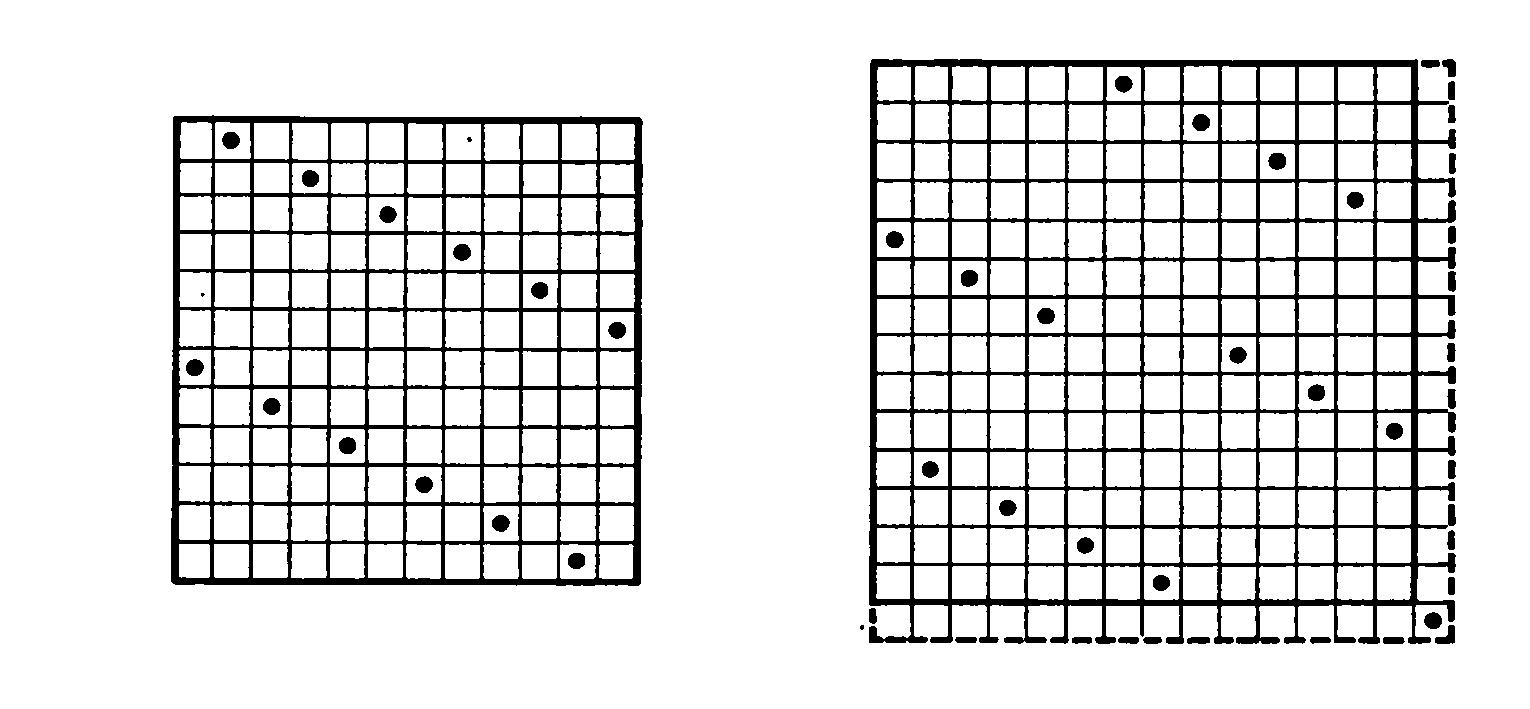}
    \caption{certificates for the $(n,2)$-queens problem by \cite{Hoffman1969}, $n=12,14,15$}
    \label{tab:vis_2d_constr}
\end{figure}

\subsection{Enumeration of Solutions}\label{subsection:enumeration_2d}
Let $\mathcal{Q}(n,d)$ denote the number of solutions to the $(n,d)$-queens problem. Table \ref{table_enumeration_2d} shows all currently known exact values of the sequence $\mathcal{Q}(n,2)$.\\
\begin{table}[H]
\begin{center}
\noindent
\begin{minipage}{0.8\linewidth}
\hspace{0.5cm}
\begin{tabularx}{0.3\textwidth}{*{3}{r}}
    \hline 
    $n$ & $\mathcal{Q}(n,2)$ \\ \hline%\hline
    0&		1 \\
    1&		1 \\
    2&		0 \\
    3&		0 \\
    4&		2 \\
    5&		10 \\
    6&		4 \\
    7&		40 \\
    8&		92 \\
    9&		352 \\
    10&		724 \\
    11&		2680 \\
    12&		14200 \\
    13&		73712 \\ \hline
\end{tabularx}
\hspace{0.5cm}
\begin{tabularx}{0.5\textwidth}{*{2}{r}}
    \hline 
    $n$ & $\mathcal{Q}(n,2)$ \\ \hline%\hline
    14&		365596\\
    15&		2279184\\
    16&		14772512\\
    17&		95815104\\
    18&		666090624\\
    19&		4968057848\\
    20&		39029188884\\
    21&		314666222712\\
    22&		2691008701644\\
    23&		24233937684440\\
    24&		227514171973736\\
    25&		2207893435808352\\
    26&		22317699616364044\\
    27&		234907967154122528\\ \hline
\end{tabularx}
\end{minipage}
\caption{\label{table_enumeration_2d}Number of solutions to the $(n,2)$-queens problem \cite{A000170}}
\end{center}
\end{table}
\begin{theorem}[Hsiang, Hsu, Shieh \cite{hsiang2004hardness}]
Finding all the solutions for the $n(n,2)$-queens problem and the modular $(n,2)$-queens problem are both beyond the \#P-class.
\end{theorem}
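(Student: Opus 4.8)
The plan is to show that neither counting function can lie in $\#P$ by a magnitude argument comparing output size against input size, a comparison that hinges crucially on the convention that the input is the single integer $n$ presented in binary, so that the input length is $|x| = \Theta(\log n)$.

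First I would recall the defining closure property of $\#P$: if $f \in \#P$, then $f(x)$ equals the number of accepting computation paths of some nondeterministic Turing machine running in time $p(|x|)$ for a fixed polynomial $p$. Such a machine has at most $2^{p(|x|)}$ computation paths in total, so every $\#P$ function obeys the a priori bound $f(x) \le 2^{p(|x|)}$. Substituting $|x| = \Theta(\log n)$, any $\#P$ function of a binary-encoded input $n$ is bounded by $2^{p(\log n)}$, that is, it is at most \emph{quasi-polynomial} in $n$.

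Second, I would supply lower bounds showing that both $\mathcal{Q}(n,2)$ and the number of modular solutions, which I denote $\mathcal{Q}_{\mathrm{mod}}(n)$, grow far faster than any quasi-polynomial. It suffices to exhibit a constant $c>1$ with $\mathcal{Q}(n,2) \ge c^{\,n}$ for all large $n$, since then $\log \mathcal{Q}(n,2) = \Theta(n)$ is exponential in the input length and eventually exceeds $p(\log n)$ for every fixed polynomial $p$; such exponential lower bounds on the number of solutions are classical and can be invoked directly. For the modular problem I would restrict to the infinite subsequence of $n$ coprime to $6$ (the residues for which solutions exist) and use the correspondence between toroidal solutions and complete mappings / orthomorphisms of $\mathbb{Z}_n$ to obtain an analogous exponential lower bound on $\mathcal{Q}_{\mathrm{mod}}(n)$ along this subsequence. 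Since a $\#P$ bound must hold for \emph{all} inputs, violating it on infinitely many $n$ already precludes membership in $\#P$.

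Combining the two steps, for every polynomial $p$ there are infinitely many $n$ with $\mathcal{Q}(n,2) > 2^{p(\log n)}$, and likewise for $\mathcal{Q}_{\mathrm{mod}}(n)$, so no polynomial can serve as the required bound and neither function is in $\#P$. I expect the lower-bound step to be the main obstacle: the magnitude argument itself is routine once a good lower bound is in hand, but proving a clean exponential lower bound, especially for the modular count restricted to admissible residues, is the substantive part. I would also take care to state the binary-encoding convention explicitly, because under a unary encoding the input length becomes $\Theta(n)$ and the same counts \emph{would} fit the $2^{\mathrm{poly}(|x|)}$ envelope; the result is therefore genuinely a statement about the succinctness of the input rather than about the intrinsic difficulty of enumerating a single instance.
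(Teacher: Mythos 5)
The paper states this theorem without proof, quoting it from Hsiang, Hsu and Shieh, and their argument is precisely the one you give: under the convention that the input is the integer $n$ in binary, every \#P function is bounded by $2^{\mathrm{poly}(|x|)} = 2^{\mathrm{poly}(\log n)}$, i.e.\ quasi-polynomially in $n$, whereas the number of (modular) $n$-queens solutions grows exponentially in $n$ along an infinite family of $n$ (e.g.\ $\gcd(n,6)=1$, via the correspondence with complete mappings of $\mathbb{Z}_n$), which rules out membership in \#P. Your proposal is correct and takes essentially the same route; the only caveat is that the exponential lower bound on the solution counts, which you invoke as classical, is the genuinely load-bearing ingredient and must be cited or proved for the argument to be complete.
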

This implies that there exists no close form to enumeration of solutions to the $(n,2)$-queens problem in $d=2$. However, significant improvements have been made in tightening lower and upper bounds by \cite{luria2017new} \cite{Luria2021}  \cite{Simkin2021} and most recently \cite{nobel2023computing}.\\

\begin{theorem}[Nobel, Agrawal, Boyd \cite{nobel2023computing} and Simkin \cite{Simkin2021}]\label{thm_simkin_nobel}
    There exists a constant $1.944000752019729 < \alpha < 1.9440010813092217$ such that
    \begin{align}
        \lim_{n \to \infty} \frac{\mathcal{Q}(n,2)^{1/n}}{n} = e^{-\alpha}
    \end{align}
\end{theorem}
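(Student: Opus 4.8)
The plan is to prove the statement by squeezing $\mathcal{Q}(n,2)$ between matching upper and lower bounds that agree to within a $(1+o(1))$ factor in the exponent, and then to identify the resulting constant with the value of an explicit convex program that can be evaluated numerically to give the stated interval. The reformulation I would use throughout is that a solution is a permutation: condition (iii) forces exactly one queen per row and per column, so a configuration corresponds to a permutation $\sigma$ of $\{1,\dots,n\}$ for which the maps $i\mapsto i+\sigma(i)$ and $i\mapsto i-\sigma(i)$ are both injective (no two queens share a diagonal or an anti-diagonal). Consequently $\log\mathcal{Q}(n,2)$ is the entropy of a uniformly random such $\sigma$, and the goal becomes the sharp estimate $\log\mathcal{Q}(n,2)=n\log n-\alpha n+o(n)$, from which the claimed limit $\mathcal{Q}(n,2)^{1/n}/n\to e^{-\alpha}$ is immediate.

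\textbf{Upper bound.} For the upper bound I would apply the entropy method, exposing the queen positions one row at a time. The conditional entropy of the column chosen in row $i$, given the placements in rows $1,\dots,i-1$, is at most the logarithm of the number of columns still compatible with the already-used columns, diagonals and anti-diagonals. Summing these conditional entropies and rescaling the board to a fixed square, one passes to a continuum limit in which the combinatorial count becomes a variational problem: the entropy rate is governed by an optimal \emph{fractional placement}, i.e.\ a density on the square describing the limiting local frequency of queens. The crucial feature that distinguishes queens from rooks is that the diagonals have unequal lengths across the board (short near the corners, long through the centre), so the constraint pressure is position dependent and the optimal density is non-uniform.

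\textbf{The constant and the lower bound.} I would then define $\alpha$ as the value of this convex optimization problem over probability densities on the square, which balances placement entropy against the loss incurred by the row, column and length-varying diagonal constraints; convexity guarantees a unique optimizer and makes $\alpha$ well defined. Two sanity checks orient the value: dropping the diagonal constraints reduces the count to $n!$ and gives the rook baseline $\alpha=1$, while on the fully symmetric torus the two diagonal families act like two further independent orthogonal constraints and push the constant up to $3$; the planar value $\alpha\approx 1.94$ sits in between precisely because the planar diagonals are on average shorter than the toroidal ones. The genuinely hard half is the lower bound, where one must exhibit roughly $(ne^{-\alpha})^{n}$ actual configurations realizing the entropy of the optimizer. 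Here I would route through the modular $(n,2)$-queens problem (Problem 2), which is well behaved when $\gcd(n,6)=1$ and whose solutions are automatically solutions of the ordinary problem by the proposition that every modular configuration is non-attacking on the standard board, and run a random greedy / Rödl-nibble construction that places queens in rounds according to the optimal density, finishing with an absorption step that completes almost-solutions into exact ones; concentration of the nibble process then yields the required count, and a transfer argument carries it back from the torus to the planar board.

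\textbf{Numerics and main obstacle.} Finally, the interval $1.944000752\ldots<\alpha<1.944001081\ldots$ is obtained by discretizing the convex program on a fine grid and certifying rigorous upper and lower bounds on its optimum through weak (LP/convex) duality, refining the grid to tighten the gap; this last, computational sharpening is exactly where Nobel, Agrawal and Boyd improve on Simkin's original estimate. I expect the main obstacle to be the lower bound: forcing the nibble-plus-absorption construction to attain the \emph{exact} entropy of the optimal fractional placement, rather than merely a positive exponential count, is delicate, and the transfer between the modular and the standard board is subtle because the two problems fail to have solutions for the same residues of $n \bmod 6$.
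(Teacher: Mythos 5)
The paper does not prove this theorem; it is quoted verbatim from the literature (\cite{Simkin2021} for the existence of the limit and \cite{nobel2023computing} for the numerical interval), so there is no internal proof to compare against. Judged on its own terms, your sketch is a broadly faithful roadmap of the actual argument: the entropy upper bound via row-by-row exposure passing to a variational problem over limiting queen densities (Simkin's ``queenons''), the definition of $\alpha$ as the value of a convex program, a randomized construction plus absorption for the lower bound, and rigorous LP/convex-duality certification of the numerical interval as the Nobel--Agrawal--Boyd contribution. As a proof it is of course only a plan, but the plan is the right one in most respects.

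There is, however, one concrete step that would fail as written: routing the lower bound through the modular $(n,2)$-queens problem. Toroidal solutions are (in the limit) forced to be uniformly distributed over the board, and their count is $((1+o(1))\,n e^{-3})^n$, i.e.\ they realize the constant $3$, not $1.944\ldots$. Since $e^{-3} < e^{-\alpha}$, the set of modular solutions is exponentially \emph{smaller} than the planar count you need, so any transfer from the torus can only certify $\alpha \leq 3$ and can never reach the sharp constant. The whole point of the planar value $\alpha \approx 1.944$ being strictly below $3$ is that the optimal planar density is non-uniform (concentrated away from the long central diagonals), and a construction attaining it must place queens non-uniformly on the standard board; Simkin's lower bound therefore runs the greedy/absorption argument directly on the planar board, guided by the optimizing queenon, rather than transferring from the modular problem. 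Replace that step and the outline matches the literature proof.
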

$\alpha$ is called the $n$-queens constant \cite{nobel2023computing} \cite{A359441}.

\newpage
\subsubsection{Density of Solutions}
\begin{theorem}[Simkin \cite{Simkin2021}]
    Let $\mathcal{R}$ be the collection of subsets of the plane of the form
    \begin{align*}
        \{ (x,y) \in [ -1/2, 1/2 ]² : a_1 \leq x+y \leq b_1, a_2 \leq y-x \leq b_2 \}
    \end{align*}
    for $a_1,a_2,b_1,b_2 \in [-1,1]$. Let $\gamma_1, \gamma_2$ be two finite Borel measures on  $[ -1/2, 1/2 ]²$. We define the distance between $\gamma_1$ and $\gamma_2$ by
    \begin{align*}
        d_{\diamond}(\gamma_1,\gamma_2) = sup\{ |\gamma_1(\alpha)-\gamma_2(\alpha)|: \alpha \in \mathcal{R} \}
    \end{align*}
    Let $Q$ be an $(n,2)$-queens solution.\\ 
    Define the step function $g_Q : [-1/2, 1/2]² \rightarrow \mathbb{R}$ by $g_Q \equiv n$ on every square 
    $( ( \frac{i-1}{n} - \frac{1}{2}, \frac{i}{n} - \frac{1}{2}) \times 
    ( ( \frac{j-1}{n} - \frac{1}{2}, \frac{j}{n} - \frac{1}{2})$ such that $(i,j) \in Q$ and $g_Q \equiv 0$ elsewhere. Let $\gamma_Q$ the probablity measure with density function $g_Q$. 

    Then there exists a Borel probability measure $\gamma^*$ on $ [-1/2, 1/2 ]^2$ such that the following holds:
    Let $\epsilon > 0$ be fixed and let $q$ be a uniformly random n-queens configuration. With high probability $d_{\diamond}(\gamma_Q,\gamma^*)< \epsilon$. 
\end{theorem}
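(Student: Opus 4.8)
The plan is to realize an $(n,2)$-queens solution as a heavily constrained analogue of a permutation and to prove a large deviation principle (LDP) whose rate function has a \emph{unique} minimizer; the limit measure $\gamma^*$ will be this minimizer, and the concentration statement will follow from a counting argument. First I would pass to the rotated coordinates $u=x+y$, $v=y-x$, in which the two diagonal families become coordinate lines. A queen solution then simultaneously satisfies one-per-row and one-per-column constraints in the original $(x,y)$ directions (forcing the $x$- and $y$-marginals of $\gamma_Q$ to be uniform in the limit) together with at-most-one-per-diagonal constraints (forcing the $u$- and $v$-marginals to be dominated by the diagonal-length profile of the square). The natural space of limiting objects is therefore the convex, weakly compact set $\mathcal{M}$ of Borel probability measures on $[-1/2,1/2]^2$ with uniform $x$- and $y$-marginals whose diagonal marginals are bounded by the diagonal capacities. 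The family $\mathcal{R}$ of diamond rectangles is chosen precisely so that $d_\diamond$ metrizes the weak topology on $\mathcal{M}$ while directly controlling these diagonal marginals.

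Next I would set up the entropy functional. For $\gamma\in\mathcal{M}$ with density $g$, take the differential entropy $\mathcal{H}(\gamma)=-\int g\log g$ (equivalently the relative entropy against Lebesgue measure), and establish a counting estimate of the form
\[
\log \#\{\, Q : d_\diamond(\gamma_Q,\gamma)<\delta \,\} = n\bigl(\mathcal{H}(\gamma)+o_\delta(1)\bigr).
\]
The upper bound here is the entropy/permanent method: a Br\'egman--Minc- or Kahn--Lov\'asz-type entropy inequality applied to the incidence structure of rows, columns and both diagonals, bounding the log-count of configurations with a prescribed approximate profile by the corresponding entropy. The matching lower bound is the constructive part: one shows that near any admissible $\gamma$ there really are $e^{n(\mathcal{H}(\gamma)+o(1))}$ solutions, e.g.\ via a Sinkhorn-type scaling producing a doubly-stochastic-like array realizing the marginals, followed by a random-greedy or iterative-absorption argument that converts this fractional object into genuine non-attacking placements without disturbing the profile in the $d_\diamond$ metric.

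With the LDP in hand, the limit shape is identified variationally. Since $-\int g\log g$ is strictly concave and $\mathcal{M}$ is convex, $\mathcal{H}$ attains its maximum over $\mathcal{M}$ at a unique measure $\gamma^*$. Upper semicontinuity of $\mathcal{H}$ together with weak compactness of $\mathcal{M}$ (metrized by $d_\diamond$) then yields, for every fixed $\epsilon>0$,
\[
M_\epsilon := \sup\{\,\mathcal{H}(\gamma) : \gamma\in\mathcal{M},\ d_\diamond(\gamma,\gamma^*)\ge\epsilon \,\} < \mathcal{H}(\gamma^*).
\]
I would then conclude by the standard ratio argument: writing $Q$ for a uniformly random configuration,
\[
\Pr\bigl[d_\diamond(\gamma_Q,\gamma^*)\ge\epsilon\bigr] = \frac{\#\{\,Q : d_\diamond(\gamma_Q,\gamma^*)\ge\epsilon\,\}}{\mathcal{Q}(n,2)} \le e^{-n(\mathcal{H}(\gamma^*)-M_\epsilon+o(1))} \to 0,
\]
because the numerator is at most $e^{n(M_\epsilon+o(1))}$ by the counting upper bound and the denominator is at least $e^{n(\mathcal{H}(\gamma^*)+o(1))}$ by the lower bound applied at $\gamma^*$. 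This is exactly the asserted high-probability closeness to $\gamma^*$.

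The hard part will be the matching counting bounds, i.e.\ the LDP itself. The upper bound needs an entropy inequality adapted to the simultaneous row, column and both-diagonal constraints, which is delicate because the diagonals have unequal lengths and the constraint is an inequality rather than an equality. The lower bound needs a genuine construction, and controlling the empirical profile $\gamma_Q$ in the $d_\diamond$ metric throughout an absorption or random-greedy process is where most of the technical effort lies. By contrast, verifying upper semicontinuity of $\mathcal{H}$ in the $d_\diamond$ topology (so that $M_\epsilon<\mathcal{H}(\gamma^*)$) and the strict-concavity argument for uniqueness of $\gamma^*$ are comparatively routine, though they must be checked carefully.
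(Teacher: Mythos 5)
This theorem is quoted from Simkin's paper \cite{Simkin2021}; the thesis does not prove it, offering only the remark that Simkin obtains the density via a martingale/iterative-placement construction. So your proposal can only be judged against Simkin's actual argument, whose broad architecture (a large-deviations/entropy-maximization principle over limit objects with prescribed marginals, uniqueness of the maximizer, and a ratio-of-counts conclusion) your sketch does reflect. The LDP framework, the compactness and upper-semicontinuity step, and the final ratio argument are all sound in outline, and you correctly identify the matching counting bounds as the technical core.

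However, there is a genuine gap in the rate function you propose, and it is fatal as written. You take $\mathcal{H}(\gamma)=-\int g\log g$ and maximize it over the set $\mathcal{M}$ of measures with uniform row/column marginals and diagonal marginals below capacity. But the diagonal capacity constraint is that an interval of diagonals of width $w$ (in the rotated coordinate $u=x+y\in[-1,1]$) contains at most a $w$-fraction of the queens, i.e.\ diagonal marginal density at most $1$; the Lebesgue measure on the square has triangular diagonal marginals of density $1-|u|\le 1$ and therefore lies in $\mathcal{M}$. Since Lebesgue measure is the unconstrained global maximizer of differential entropy, your variational problem would return $\gamma^*=\mathrm{Leb}$, i.e.\ it would predict that random queens configurations are asymptotically uniform --- contradicting the manifestly non-uniform density $\gamma^*$ that the theorem (and the paper's own figure) describes. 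The missing ingredient is that the probability that a configuration with profile $\gamma$ avoids all diagonal collisions is itself exponentially small in $n$ with a rate depending on the diagonal marginals (heuristically, a diagonal with expected occupancy $c$ is collision-free with probability about $(1+c)e^{-c}$, contributing integral terms in $\gamma_{\nearrow}$ and $\gamma_{\searrow}$ to the exponent). These diagonal terms must be added to $-\int g\log g$; they penalize crowded diagonals and are precisely what pushes the optimizer away from uniform toward the ring-shaped density. Without them, both your counting upper bound and the identification of $\gamma^*$ are wrong, even though the surrounding concentration machinery would work once the correct functional is in place. (A secondary, cosmetic issue: the count should be $e^{n\log n-n+n\mathcal{H}(\gamma)+o(n)}$ rather than $e^{n\mathcal{H}(\gamma)+o(n)}$, but the $n\log n-n$ term cancels in the ratio.)
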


\begin{figure}[H]
    \centering
    \includegraphics[width=0.7\linewidth]{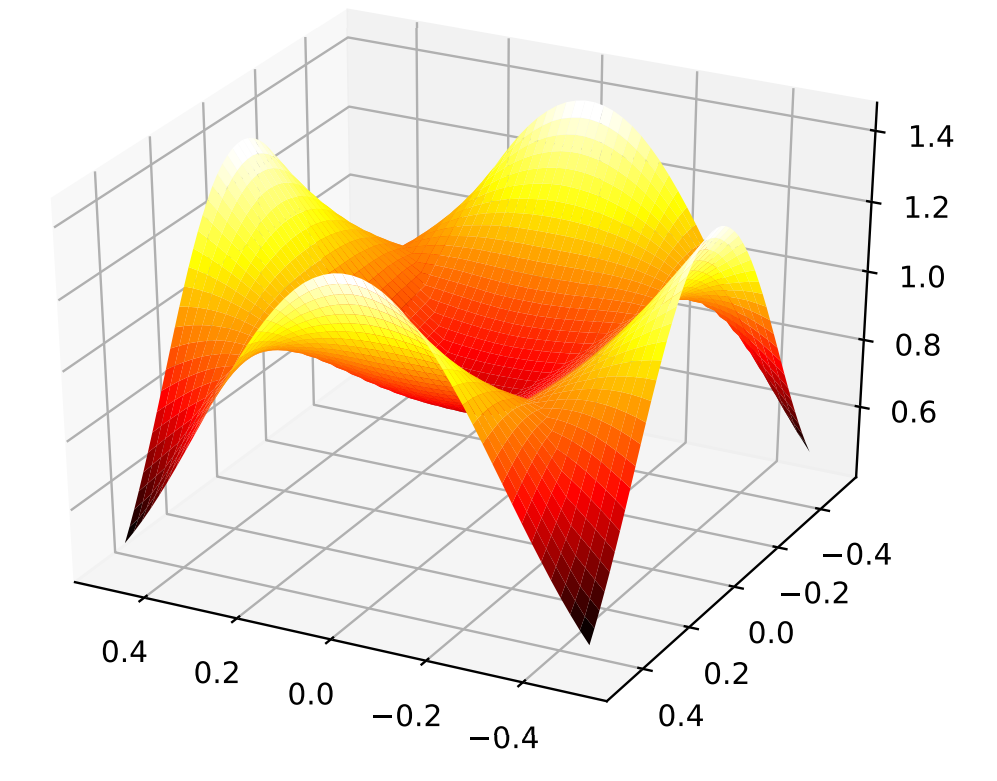}
    \caption{Density $\gamma^*$, the distribution of queens for the $(n,2)$-queens problem}
    \label{fig:dens_simkin}
\end{figure}

Intuitively, one may understand this density by observing that squares near the corners dominate fewer squares, while a queen placed in the middle of the board dominates the maximum number of squares. At the same time, placing a queen in a corner forbids the placement of further queens in both that row and column, so it appears to be least expensive to place queens that min-max this relation. Tied to this intuition, \cite{Simkin2021} derives the density function through a martingale construction that iteratively places queens on the board.

\begin{figure}[H]
\setkeys{Gin}{width=0.5\linewidth}
\includegraphics{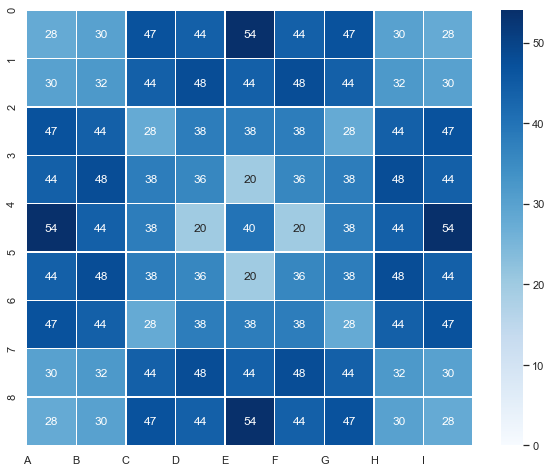}
\hfill
\includegraphics{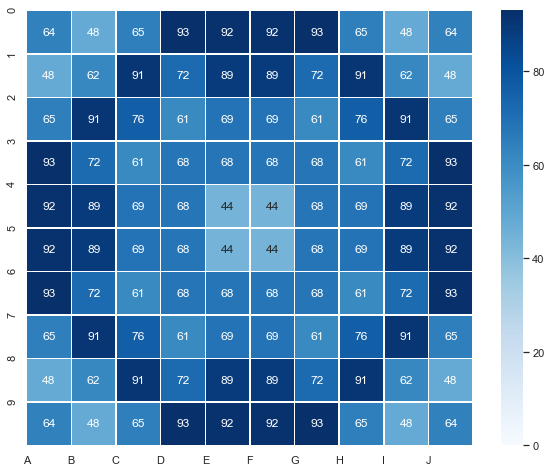}

\smallskip
\includegraphics{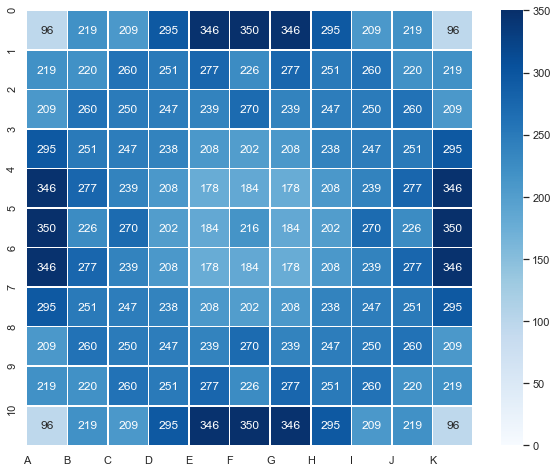}
\hfill
\includegraphics{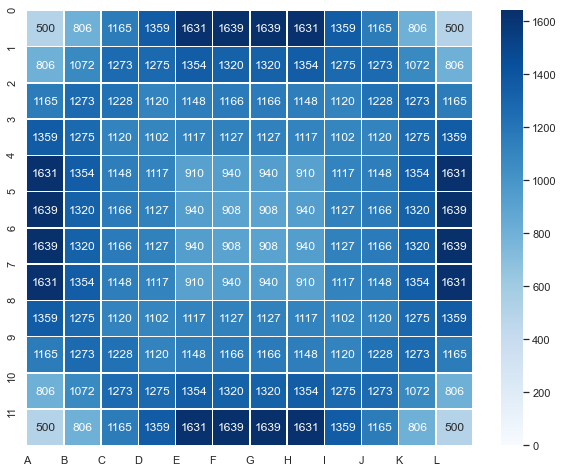}

\caption{Illustrating the density of solutions to the $(n,2)$-queens problem for $n=9,10,11,12$. The number in each square describes the number of solutions that place a queen on that square.}
\label{fig:density_d2}
\end{figure}

\newpage
\section{The $(n,3)$-Queens Problem}
While polynomial time construction methods exist for all $(n,2)$-boards, this is not the case for maximal partial solutions in $d\geq3$. This section discusses the specific case for $d=3$, i.e., the $(n,3)$-queens problem and different solution methods. These methods appear more intuitive when introduced in three-dimensional space. They connect to Latin squares and the queen's graph. Many results naturally extend to higher dimensions $d>3$, which will be the topic of the following section \ref{sec_gereral_dimension}.
\subsection{Existence and Construction of Solutions}
\begin{definition}[Latin Square \cite{mckay2005number}]
    For $n \in \mathbb{N}$, a Latin square is a $n \times n$ array with entries in $(1,2,\dots ,n)$ such that the entries in each row and in each column are distinct.
\end{definition}

The generalization of the $n$-queens problem to the third dimension is first proposed by McCarthy \cite{mccarty1978queen}. In his work, he draws the connection to Latin squares:\\
\\
\textit{"The placing of the $n^2$ non-attacking rooks into an $n$-cube is now reduced to the filling of an $n \times n$ grid with $n$ copies of the set $\{ 1,2,...,n\}$ such that no to element is in the same row or column twice. This problem is equivalent to finding a Latin square of order $n$. Such a square can be obtained, for
example, by cyclically permuting the elements $\{1, 2, ... , n\}$ and gives the desired maximal solution for
any $n$."}\\
\\ 
Continuing this thought, we may describe the $(n,3)$-queens problem as equivalent to finding Latin squares of order $n$ with additional diagonal constraints. This representation shines light on the problem of enumerating solutions; it suggests using methods of enumerating higher dimensional permutations and permutations with additional constraints as discussed by \cite{keevash2018existence} may be a suitable approach to determine bounds for the number of solutions to the $(n,3)$-queens problem.\\ 
\cite{mccarty1978queen} further mentions the trivial upper bound $|Q_{max}(n,3)| \leq n^2$ for the maximal partial solutions for $d=3$. We formulate the following observation:\\
\begin{proposition}[]
\label{cor_superimpo}
Any solution $Q$ to the $(n,3)$-queens problem also yields $n$ superimposable solutions to the $(n,2)$-queens problem in each of the three dimensions of the cube.  
\end{proposition}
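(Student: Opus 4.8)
The plan is to slice the cube into its $n$ layers along a fixed dimension and to show that the orthogonal projection of each layer is a full $(n,2)$-queens solution, with the $n$ projections pairwise disjoint. Fix, say, the third coordinate and for $i \in \{1,\dots,n\}$ let $Q_i = \{q \in Q : a_3 = i\}$ be the $i$-th layer in the third dimension; these sets partition $Q$. Write $\pi(q) = (a_1,a_2)$ for the projection that forgets the third coordinate, and let $P_i = \pi(Q_i)$, regarded as a configuration on the $(n,2)$-board. The first step is the observation that a queen's move-set contains the rook's: two queens sharing two of their three coordinates attack each other. Consequently $\pi$ is injective on $Q_i$ (two same-layer queens with the same $(a_1,a_2)$ would be identical), and, more importantly, for distinct queens in $Q_i$ the $3$D attack condition reduces exactly to the $2$D one: since $a_3 = b_3 = i$, the equation $a_3 = \epsilon_3 m + b_3$ forces $\epsilon_3 m = 0$, hence $\epsilon_3 = 0$ (as $m=0$ would make the queens equal), so a witnessing $\epsilon$ lies in $\{-1,0,1\}^2$ and describes precisely a $2$D queen attack on the projected squares. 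Because $Q$ is mutually non-attacking, each $P_i$ is therefore a set of mutually non-attacking queens on the $(n,2)$-board, i.e. a partial solution, and in particular $|P_i| = |Q_i| \le n$.

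The second step turns these partial solutions into full ones by counting. Since the layers partition $Q$ and $|Q| = n^2$, we have $\sum_{i=1}^n |P_i| = n^2$; combined with $|P_i| \le n$ across the $n$ layers, this forces $|P_i| = n$ for every $i$, so each $P_i$ is a genuine solution to the $(n,2)$-queens problem. It remains to check that $\{P_i\}_{i=1}^n$ is superimposable, i.e. pairwise disjoint as subsets of the board. If a square $(a_1,a_2)$ belonged to both $P_i$ and $P_{i'}$ with $i \neq i'$, its preimages would be two distinct queens of $Q$ agreeing in the first two coordinates but differing in the third, and these attack each other by the rook sub-move---contradicting that $Q$ is a solution. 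Hence the $P_i$ are disjoint and form $n$ superimposable $(n,2)$-queens solutions. Finally, since the queen attack definition is symmetric in the three coordinates, the identical argument applied to the first and to the second coordinate yields $n$ superimposable solutions in each of the three dimensions, as claimed.

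The two counting/disjointness arguments are routine; the one step deserving care is the reduction of the $3$D attack relation inside a layer to the $2$D attack relation. The subtlety is in handling the projected coordinate correctly: one must argue that any $\epsilon$ witnessing an attack between two same-layer queens necessarily has $\epsilon_3 = 0$, and conversely that every $2$D attack between the projected squares lifts to a $3$D attack within the layer, so that ``non-attacking in $3$D'' and ``non-attacking in $2$D'' coincide on each slice. Everything else follows from the elementary fact that the queen dominates every square the rook does, which supplies both the injectivity of the projection and the cross-layer disjointness.
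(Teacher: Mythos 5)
Your proposal is correct and follows essentially the same route as the paper: decompose the cube into the $n$ layers of a chosen dimension, note each layer carries at most $n$ non-attacking queens so that the count $n^2$ forces exactly $n$ per layer, and derive superimposability from the non-attacking property along the sliced dimension. You merely spell out in more detail the step the paper leaves implicit, namely that the $3$D attack relation restricted to a layer coincides with the $2$D attack relation on the projected board.
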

\begin{proof}
    Given a solution to the $(n,3)$-queens problem choose one dimension without loss of generality. Consider each of the $n$ layers in the chosen dimension as a $(n,2)$-board, by dropping the coordinate of the chosen dimension. $n^2$ queens are placed on the $(n,3)$-board, so $n^2$ queens are placed on all $n$ $(n,2)$-boards in total. No more than $n$ queens can be placed on a single  $(n,2)$-board and exactly $n$ have to placed on each board for them to add up to $n^2$. Hence the layers of the $(n,3)$-solution yield $n$ $(n,2)$-solutions. Those $n$ solutions are superimposable due to the initial $(n,3)$-solution being a non-attacking configuration in the chosen dimension in particular.
\end{proof}
Every $n \times n$ layer of a partial solution of the $n \times n \times n$ cube may only contain a maximum of $n$ queens. Conversely, every layer of a solution to the $(n,2)$-queens problem has to contain exactly $n$ queens. 

Lastly, \cite{mccarty1978queen} provides the first lower bounds on maximal partial solutions, using search algorithms (the method is not further elaborated on) for $n$ up to 18. The following table also includes the currently best-known bounds in comparison:\\

\begin{table}[H]
\begin{center}
\noindent
\begin{minipage}{0.8\linewidth}
\hspace{0.5cm}
\begin{tabularx}{0.4\textwidth}{*{3}{r}}
    \hline 
    $n$ & \cite{mccarty1978queen} & current \\ \hline%\hline
    3 & 4 & \textbf{4} \\ %\hline
    4 & 7 & \textbf{7}\\% \hline
    5 & 13& \textbf{13}\\% \hline
    6 & 18& \textbf{21} \\% \hline
    7 & 27& \textbf{32} \\% \hline
    8 & 34&  \textbf{48}\\ %\hline
    9 & 43&  \textbf{67}\\ %\hline
    10& 58&  \textbf{91}\\ \hline
\end{tabularx}
\hspace{0.5cm}
\begin{tabularx}{0.4\textwidth}{*{3}{r}}
    \hline 
    $n$ & \cite{mccarty1978queen} & current \\ \hline%\hline
    11 & 68 & \textbf{121} \\ %\hline
    12 & 80& 133\\% \hline
    13 & 96& \textbf{169}\\% \hline
    14 & 111& 172 \\% \hline
    15 & 132& 199 \\% \hline
    16 & 151&  241\\ %\hline
    17 & 171&  \textbf{289}\\ %\hline
    18&  191&  307\\ \hline
\end{tabularx}
\end{minipage}
\caption{\label{table-1}lower bounds for $|Q_{max}(n,3)|$}
(bold entries are proven maximal)
\end{center}
\end{table}
The original table from \cite{mccarty1978queen} additionally includes the variable $R(n) := \frac{|Q_{max}(n,2)|}{n^2}$ and asks, if there exists an upper bound on $R(n)$. We now know that this bound is $1$ from the construction of the following theorem \ref{Klarner}. But we may still ask:\\
\\
\textit{For given $n_0$, can we find a lower bound $\alpha_{n_0}$ for $R(n)$, such that:}
\begin{align}
    \alpha_{n_0} \leq R(n) = \frac{|Q_{max}(n,3)|}{n^2} \text{ , } \forall n \geq n_0
\end{align}\\
This question will be addressed again in section \ref{subsec_lowerbounds3d}, where heuristics for partial solutions are discussed. 
\\
\subsubsection{Regular Solutions}
\begin{theorem}[Klarner \cite{klarner1979queen}]
\label{Klarner}
If $\text{gcd}(n,210) = 1$ then $n^2$ non-attacking queens can be placed on the $(n,3)$-board, i.e. the $(n,3)$-queens problem has a solution.
%\[ x^2 + y^2 = z^2 \]
\end{theorem}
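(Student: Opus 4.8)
The plan is to exhibit an explicit \emph{regular} solution to the \emph{modular} $(n,3)$-queens problem; by the Proposition stating that every solution to the modular $(n,d)$-queens problem is also a solution to the $(n,d)$-queens problem, such a configuration immediately settles Theorem \ref{Klarner}. The reduction rests on the simple observation that the integer attack relation $a-b = m\epsilon$ is a special case of its modular counterpart $a \equiv b + m\epsilon \pmod n$, so producing a configuration with \emph{no} modular attacks is strictly stronger than, and hence sufficient for, a non-attacking placement on the ordinary board.

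I would parametrize the queens by two indices $i,j \in \{1,\dots,n\}$ (read modulo $n$) and, for fixed integer constants $c,e$, place the queen $q_{i,j} = (i,\,j,\,z_{i,j})$ with $z_{i,j} \equiv c\,i + e\,j \pmod n$. This gives exactly $|Q| = n^2$ pairwise distinct queens with $\norm{Q}\le n$, so conditions (i) and (ii) hold by construction and only the non-attacking condition (iii) must be checked. The point of the linear placement is that, writing $(u,v,w)$ for the coordinatewise difference of two queens, the third coordinate is determined by $w \equiv c\,u + e\,v \pmod n$, reducing (iii) to a finite, direction-by-direction verification.

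Two queens attack modularly iff $(u,v,w)\equiv m\,\epsilon \pmod n$ for some $m$ and some nonzero $\epsilon\in\{-1,0,1\}^3$; up to sign there are $13$ such directions. The direction $(0,0,1)$ never links distinct queens, as it forces $u\equiv v\equiv 0$. For each of the remaining $12$ directions, substituting $w\equiv c\,u+e\,v$ collapses the proportionality requirement to a single congruence $t\cdot m\equiv 0 \pmod n$, where $t$ ranges over the twelve linear forms
$$ c,\quad e,\quad c+e,\quad c-e,\quad c\pm 1,\quad e\pm 1,\quad c+e\pm 1,\quad c-e\pm 1. $$
Thus attacks are ruled out exactly when $\gcd(t,n)=1$ for each such $t$. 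I would then choose $c=3$, $e=5$: the twelve values become $3,5,8,-2,2,4,4,6,7,9,-3,-1$, each nonzero with all prime factors lying in $\{2,3,5,7\}$. Hence every condition $\gcd(t,n)=1$ holds as soon as $n$ is coprime to $2\cdot 3\cdot 5\cdot 7 = 210$, i.e. whenever $\gcd(n,210)=1$, which completes the construction.

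The routine-but-delicate part is the exhaustive case analysis over the $13$ directions that yields the twelve linear forms, together with careful handling of signs and of the degenerate directions having a zero coordinate. The genuinely pointed step is the choice of constants: one must pick $c,e$ so that all twelve combinations avoid every prime dividing $n$ \emph{simultaneously} for all admissible $n$. The number $210$ in the hypothesis is precisely the product of the primes that can arise (the largest being $7 = c+e-1$), which is why no prime beyond $7$ must be excluded; I expect this to be the crux, and it also explains why the theorem is stated via $\gcd(n,210)=1$ rather than a single congruence.
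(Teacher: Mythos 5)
Your proposal is correct and follows essentially the same route as the paper: a regular linear placement $q_3 \equiv a q_1 + b q_2 \pmod n$ with $a=3$, $b=5$, reducing the non-attacking conditions to the requirement that all the linear forms $e_0 + e_1 a + e_2 b$ with $e_i \in \{-1,0,1\}$ be coprime to $n$, which holds precisely when $\gcd(n,210)=1$. Your explicit detour through the modular board and the containment proposition is a slightly cleaner framing of what the paper does implicitly, but the construction and the coprimality analysis are identical.
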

\begin{proof}
There is a simple construction which gives a solution to this problem for all $n$ whose largest prime factor exceeds $7$. For any queen $q=(q_1,q_2,q_3)$, we may write $q_3$ as a function of $q_1$ and $q_2$: $q_3(q_1, q_2)$. For a non-attacking placement of $n^2$ queens the following equations are then both necessary and sufficient:\\
\begin{align}
&\text{For all $k$ with } 1 \leq k \leq n-1 \nonumber \\
\vspace{0.3cm}
    &q_3(q_1+k,q_2) - q_3(q_1,q_2) \neq 0 \nonumber\\
    &q_3(q_1+k,q_2) - q_3(q_1,q_2) \neq k \nonumber\\
    &q_3(q_1+k,q_2) - q_3(q_1,q_2) \neq -k \nonumber\\
    &q_3(q_1,q_2+k) - q_3(q_1,q_2) \neq 0 \nonumber\\
    &q_3(q_1,q_2+k) - q_3(q_1,q_2) \neq k \nonumber\\
    &q_3(q_1,q_2+k) - q_3(q_1,q_2) \neq -k \nonumber\\
    &q_3(q_1+k,q_2+k) - q_3(q_1,q_2) \neq 0 \nonumber\\
    &q_3(q_1+k,q_2+k) - q_3(q_1,q_2) \neq k \nonumber\\
    &q_3(q_1+k,q_2+k) - q_3(q_1,q_2) \neq -k 
\end{align}
The construction takes the form $q_3(q_1,q_2) = (a \cdot q_1 + b \cdot q_2) \text{ mod } n$ for $a,b \in \mathbb{N}$. The listed equations become the requirement that all of the numbers $e_0+e_1a+e_2b$ with $e_0,e_1,e_2 \in \{ -1,0,1 \}$ have no prime factor in common with $n$. It is easy to check that this condition cannot be met when 2, 3, 5 or 7 is a factor of $n$. However, if all of the prime factors of $n$ exceed 7, then it can be $a=3$, $b=5$ which yields a solution for any such $n$.
\end{proof}
\cite{van1981latin} proves the same results, so does \cite{MontgomerySmith}. A slightly weaker result is shown by \cite{herzberg1981latin}, stating that for any prime $n \geq 11$, a solution exists, following a construction using a similar knight-move pattern. 

Each solution yields a set of $n$ superimposable solutions for the $(n,2)$-queens problem. Different choices for $a,b$ other than 3, 5 are possible, depending on $n$. The constructed solutions are regular solutions. 
\begin{figure}[H]
    \centering
    \includegraphics[width=0.65\linewidth]{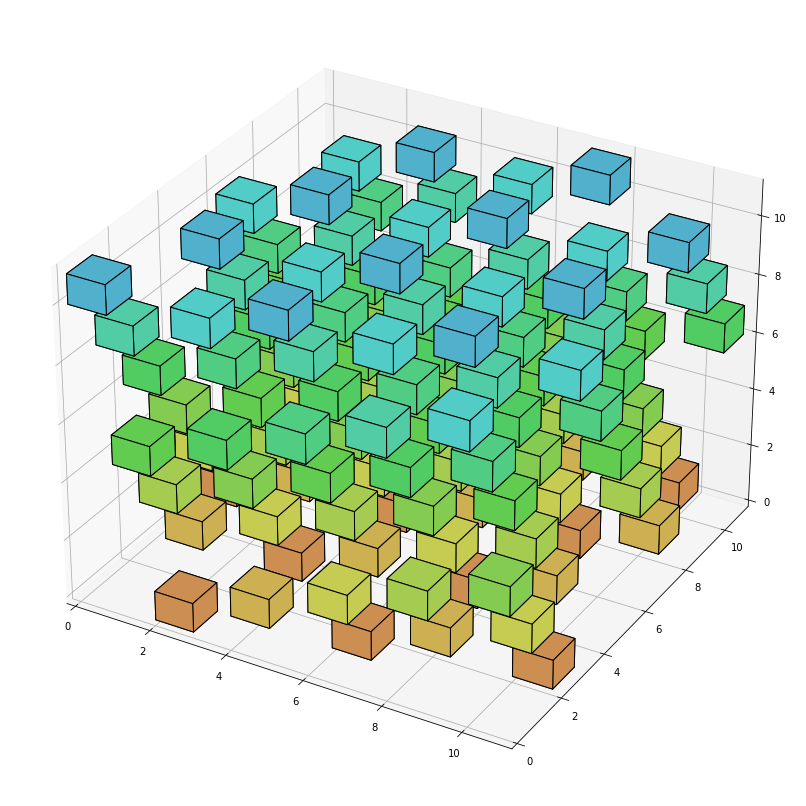}
    \caption{regular solution for $n=11, d=3$}
    \label{fig:klarner}
\end{figure}
Figure \ref{fig:klarner} is an example of the construction of a regular solution for $n=11$ with the choice of $a=3$ and $b=5$. Each $z$-layer is colored in the same hue, distinguishing the 11 superimposable solutions. Figure \ref{fig:klarner2} shows the same solution viewed from above the $xy$-plane.
\begin{figure}[H]
    \centering
    \includegraphics[width=0.65\linewidth]{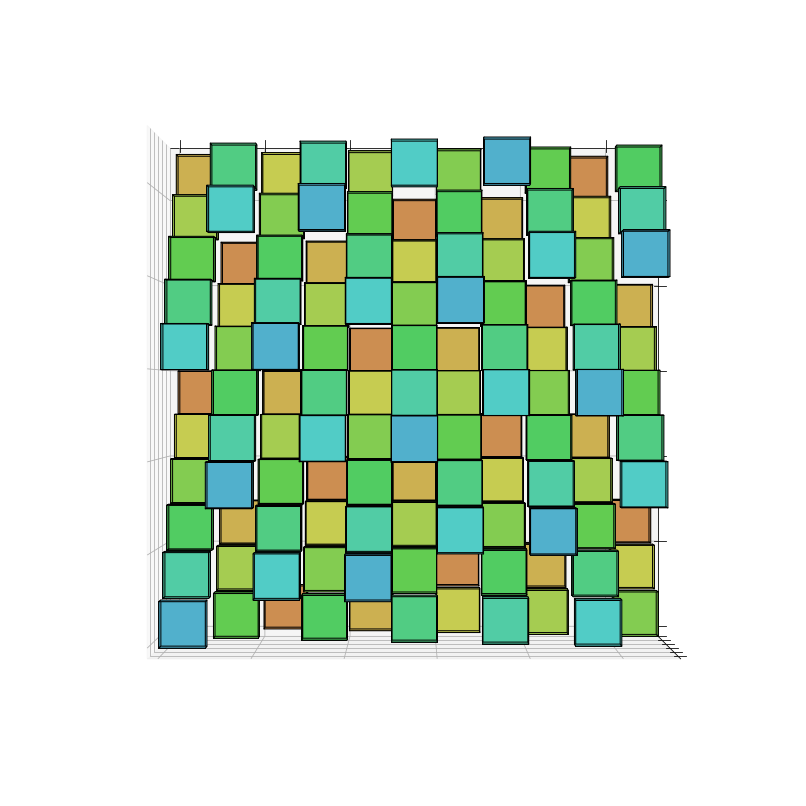}
    \vspace{-1cm}
    \caption{regular solution for $n=11, d=3$}
    \label{fig:klarner2}
\end{figure}

Due to their regularity, we can distinguish $n$ classes of solutions by fixing a queen on $(1,1,s)$, $s \in \{ 1, 2, ..., n\}$. These classes of solutions are equivalent, as all of them can be obtained by shifting the solutions of one class in the previously fixed dimension and applying modulo $n$ if necessary. A visual interpretation of this is given below in Fig. \ref{fig:shifting_property}, by identifying the two opposing sides in said dimension with one another. This shifting property allows us to focus on only one class of solutions, that is, to fix a queen on $(1,1,1)$ when discussing the enumeration and density of such solutions.

\begin{figure}[H]
    \centering
    \includegraphics[width=0.85\linewidth]{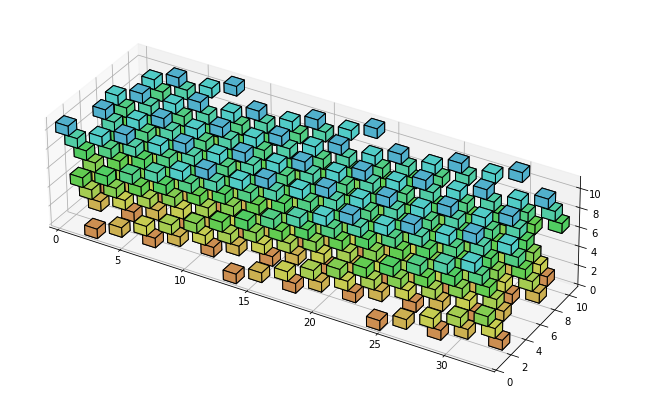}
    \caption{regular solution for $n=11, d=3$, continued}
    \label{fig:shifting_property}
\end{figure}

While the construction by \cite{klarner1979queen} shows that there exists a solution to the $(n,d)$-queens problem for $d=3$ for infinitely many $n$, a necessary condition for the existence of solutions remains to be shown. \cite{van1981latin} conjectures that $\text{gcd}(n,210)=1$ is not only sufficient (as shown in Theorem \ref{Klarner}), but also a necessary condition. So far, this conjecture is only supported by the fact that (a) no construction method for other $n$ exists and (b) instances for $n$, $\text{gcd}(n,210)>1$, for which the decision problem could be solved are infeasible.\\

\newpage
\subsubsection{Colouring the Queen Graph} \label{subsubsection:queensgraph}
Va\v{s}ek Chv\'{a}tal lists \textit{Colouring the queen graphs} under unsolved problems on his website \cite{chvatalonline}. He writes:

\textit{The $n \times n$ queen graph has the squares of an $n \times n$ chessboard for its vertices and two such vertices are adjacent if, and only if, queens placed on the two squares attack each other.}

\begin{figure}[H]
    \centering
    \includegraphics[width=0.5\linewidth]{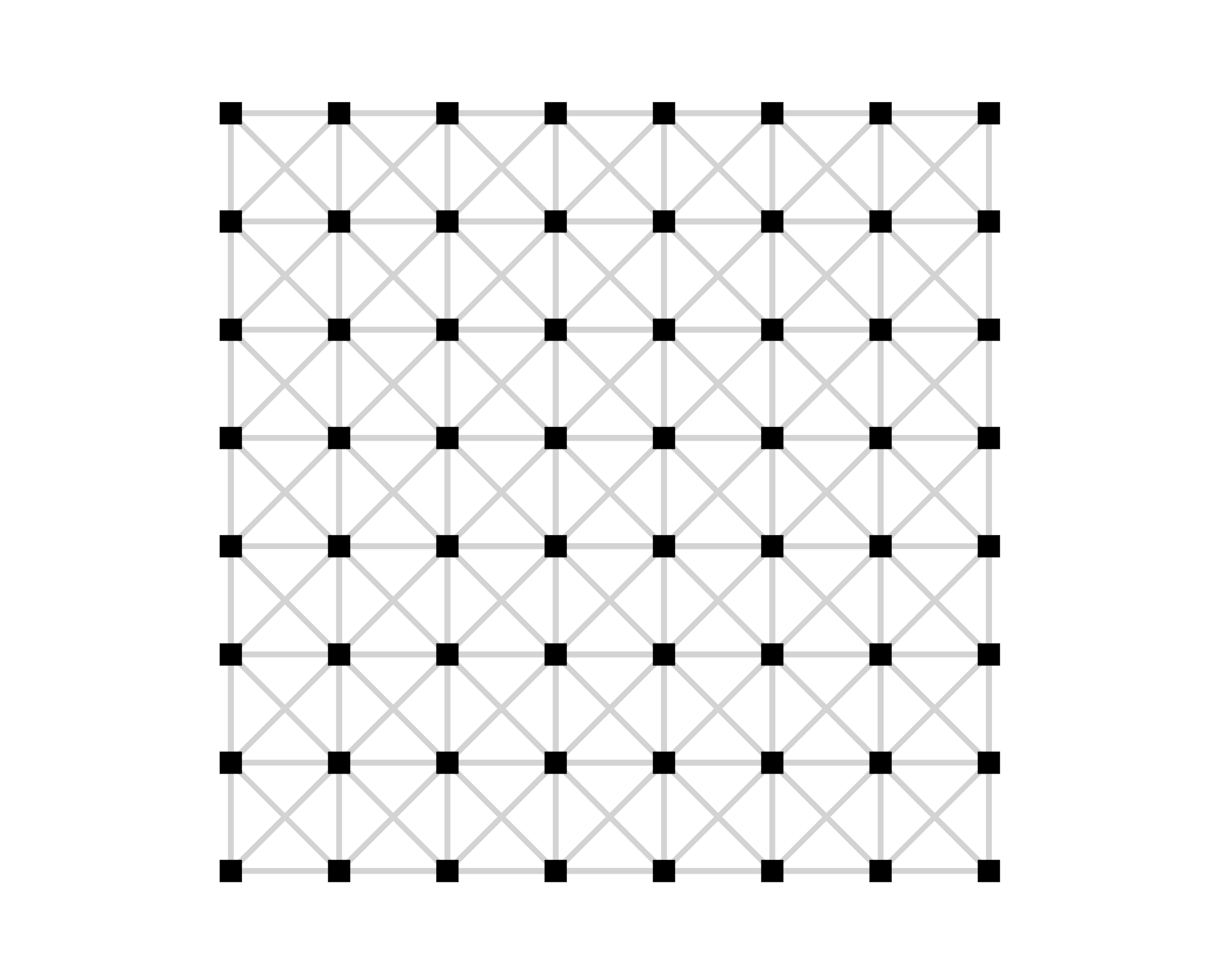}
    \caption{$(8,2)$-queen graph}
    \label{fig:queen_graph}
\end{figure}

Continuing the introduced notation, we will call such a graph the $(n,2)$-queen graph to distinguish it from graphs corresponding to boards of higher dimensions. An independent set of vertices on the $(n,2)$-queen graph corresponds to a mutually non-attacking configuration on the $(n,2)$-board. Thus, finding a solution to the $(n,2)$-queens problem is equivalent to finding a maximum independent set on the $(n,2)$-queen graph \cite{foulds1984application}. This, again, is equivalent to finding a maximal clique in the complement of the queens graph. We can generalize this observation for the $(n,d)$-queen graph for $d\geq 3$, leading to the integer programming formulation as an independent set problem. The resulting integer programming formulation and computational results are discussed in section \ref{sectionip_formulation}.

If we consider a vertex colouring of the $(n,2)$-queen graph $G$, called $G_{n,2}$, it is clear that for the chromatic number $\gamma (G_{n,2}) \geq n$. This is due to all $n$ vertices of a row, column, or the two long diagonals of the board being adjacent on the queen graph. The problem proposed by Chv\'{a}tal asks, for which $n$ there exists an $n$-colouring of the $(n,2)$-queen graph, meaning $\gamma (G_{n,2}) = n$.

An $n$-colouring of the queen graph contains exactly $n$ nodes of each color. As each set of $n$ nodes within the same color is an independent set on the queen graph, it directly corresponds to a solution to the $(n,2)$-queens problem.

\begin{proposition}
\label{cor_queensgraph}
    $n$-colouring the $(n,2)$-queen graph is equivalent to finding $n$ superimposable solutions to the $(n,2)$-queens problem.
\end{proposition}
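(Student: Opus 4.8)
The statement is an equivalence, so the plan is to prove both implications by viewing a proper vertex colouring of the $(n,2)$-queen graph $G_{n,2}$ as a partition of the $n^2$ squares into colour classes and then counting. The whole argument is combinatorial and rests on the trivial fact that any independent set in $G_{n,2}$ has at most $n$ vertices.

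For the forward direction, I would suppose we are given a proper $n$-colouring of $G_{n,2}$. By definition of a proper colouring, each of the $n$ colour classes $C_1,\dots,C_n$ is an independent set, and together they partition the $n^2$ vertices. First I would record the elementary upper bound that every independent set in $G_{n,2}$ has at most $n$ vertices: two queens sharing a row attack each other, so each of the $n$ rows of the board contains at most one vertex of any independent set, giving $|Q_{max}(n,2)| \le n$. With $n$ classes, each of size at most $n$, summing to $n^2$, a counting (averaging) argument forces every class to have size exactly $n$. Each such class is then a size-$n$ independent set, i.e. a solution to the $(n,2)$-queens problem, and since the classes are disjoint by construction they form a superimposable set of $n$ solutions. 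For the converse, I would start from $n$ superimposable solutions $Q_1,\dots,Q_n$. Each $Q_i$ has exactly $n$ queens and they are pairwise disjoint, so together they occupy $n\cdot n = n^2$ distinct squares; as the board has exactly $n^2$ squares, the $Q_i$ partition the board. I would then colour every square in $Q_i$ with colour $i$, which is well defined precisely because the partition covers each square exactly once. This colouring is proper, since two adjacent vertices lie on a common line of attack and hence cannot both belong to the same mutually non-attacking $Q_i$, so they receive different colours; this produces an $n$-colouring of $G_{n,2}$.

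The main obstacle, though a mild one, is the counting step in the forward direction, where the relation $n^2 = \sum_{i} |C_i|$ together with each $|C_i| \le n$ must be promoted to $|C_i| = n$ for all $i$. This is the only place an inequality is turned into an equality, and it is exactly where the hypothesis of an $n$-colouring (as opposed to a colouring with more than $n$ colours) is essential: with more colour classes one could no longer conclude that each class is a full solution rather than a partial one.
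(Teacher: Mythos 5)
Your proposal is correct and takes essentially the same approach as the paper: both implications rest on the fact that each row of the board is a clique in $G_{n,2}$, so every colour class meets each row at most once. The paper phrases the forward direction as a proof by contradiction (a colour class of size less than $n$ would miss some row, yet every row must receive all $n$ colours), whereas you run the same pigeonhole observation as a direct counting argument $n^2=\sum_i|C_i|$ with $|C_i|\le n$; the content is identical and your write-up of the converse is a little more explicit than the paper's ``clear.''
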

\begin{proof}
$\Leftarrow$: clear. 
$\Rightarrow$: Through contradiction; Assume one colour appears on $k \neq n$ vertices. Then there exists at least one colour $c$ which appears on $k_c < n$ vertices. That would imply that there exists at least one row or column, in which $c$ does not appear, which leads to a contradiction as we require $n$ colours in each row and column to colour the graph. 
\end{proof}

\begin{table}[H]
    \centering
    \begin{tabular}{ccccc}
        0 & 1 & 2 & 3 & 4\\
        3 & 4 & 0 & 1 & 2\\
        1 & 2 & 3 & 4 & 0\\
        4 & 0 & 1 & 2 & 3\\
        2 & 3 & 4 & 0 & 1\\
    \end{tabular}
    \caption{certificate for the $(5,2)$-queen graph \cite{chvatalonline}}
    \label{tab:chvatal_queensgraph_certificate_n5}
\end{table}

Following Corollary \ref{cor_superimpo} and Corollary \ref{cor_queensgraph}, it is clear that for given $n$, the existence of an $n$-colouring of $G_{n,2}$ is a necessary condition for the existence of a solution to the $(n,3)$-queens problem. If it can be shown that no such an $n$-colouring of $G_{n,2}$, i.e., a set of $n$ superimposable solutions to the $(n,2)$-queens problem exists, then the $(n,3)$-queens problem has no solution. Chv\'{a}tal \cite{chvatalonline} discusses this in the context of coloring the queen graph and shows that no such set exists for $n=$ 8, 9, and 10. He further lists solutions for $n=$ 12, 14, 15, 16, 18, 20, 21, 22 and 24. \cite{vasquez2004complete} later extends this list with certificates for $n=$ 28 and 32. Given an $r$-colouring of the $r \times r$ graph and $s$-colouring of the $s \times s$ graph, a coloring of the $rs \times rs$ can be constructed (cf. \cite{abramson1986construction}). 

\begin{figure}[H]
    \centering
    \includegraphics[width=0.7\linewidth]{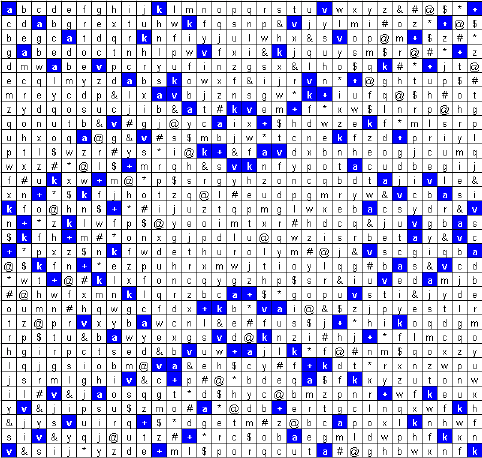}
    \caption{certificate for the $(32,2)$-queen graph \cite{vasquez2004complete}}
    \label{fig:klarner2}
\end{figure}

The two following theorems give insight into sufficient conditions for the existence of an $n$-colouring. However, a necessary condition remains to be found. The existence of such a necessary condition would have great implications for the problem at hand; as for the case of the existence of (infinitely many) $n$ that do not fulfill such a condition, we could conclude that the $(n,3)$-queens problem would not have a solution for those $n$. Conversely, the existence and construction of superimposable solutions may aid the construction of solutions to the (partial) $(n,3)$-queens problem. %, as demonstrated by \cite{Lyndenlea}.
\begin{theorem}[Iyer, Menon \cite{iyer1966coloring}] % rephrased from: If n is not divisible by 2 or 3, the chromatic number of the graph of Queen's move is n. 
\label{thm_iyer}
    If $n$ is not divisible by $2$ or $3$, the chromatic number of the $n$-queens graph is $n$.
\end{theorem}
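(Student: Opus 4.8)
The plan is to establish the two bounds $\gamma(G_{n,2}) \geq n$ and $\gamma(G_{n,2}) \leq n$ separately. The lower bound is already recorded in the text: the $n$ squares of any fixed row form a clique in the queen graph, so $\gamma(G_{n,2}) \geq n$ holds for every $n$. It therefore remains to exhibit an explicit $n$-colouring under the hypothesis $\gcd(n,6)=1$. By Proposition \ref{cor_queensgraph}, an $n$-colouring is the same data as $n$ superimposable solutions to the $(n,2)$-queens problem, so it suffices to partition the board into $n$ mutually non-attacking transversals, one per colour.

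For the construction I would index rows and columns by the residues $\mathbb{Z}/n\mathbb{Z} = \{0,1,\dots,n-1\}$ (matching the certificate in Table \ref{tab:chvatal_queensgraph_certificate_n5}) and colour the square $(i,j)$ by $c(i,j) := (i + 2j) \bmod n$. This uses exactly $n$ colours. The first thing to check is that each colour class is a transversal: fixing a colour $k$ and a row $i$, the congruence $i + 2j \equiv k$ determines $j$ uniquely because $2$ is invertible modulo $n$ (here the assumption $2 \nmid n$ enters), and symmetrically fixing a column determines the row; hence each class meets every row and every column exactly once and has size $n$.

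The substantive step is verifying that no colour class contains two squares on a common diagonal, and this is where the hypothesis $3 \nmid n$ is used. Suppose $(i,j)$ and $(i',j')$ share a colour and lie on a common descending diagonal, i.e.\ $i - i' = j - j' =: t$ as integers. Subtracting the colour congruences gives $(i-i') + 2(j-j') \equiv 0 \pmod n$, which upon substitution reads $3t \equiv 0 \pmod n$; since $3$ is invertible modulo $n$ this forces $t \equiv 0 \pmod n$, and as $|t| < n$ we get $t=0$, so the two squares coincide. On the anti-diagonal $i - i' = -(j-j')$ the same elimination yields $-t \equiv 0 \pmod n$ and again $t=0$. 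Thus every colour class is non-attacking in all four directions.

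Collecting these facts, $c$ is a proper $n$-colouring, giving $\gamma(G_{n,2}) \leq n$, and together with the clique bound $\gamma(G_{n,2}) = n$. I expect the diagonal check to be the only real obstacle, and it is precisely the coprimality of $\{1,2,3\}$ with $n$ — equivalently $\gcd(n,6)=1$ — that makes the slopes $a=1,\,b=2$ admissible; this is the two-dimensional mirror of the condition that $e_0 + e_1 a + e_2 b$ be coprime to $n$ in the proof of Theorem \ref{Klarner}. As an alternative route one could invoke P\'olya's modular $(n,2)$-queens construction, whose $n$ translates furnish the required superimposable solutions directly.
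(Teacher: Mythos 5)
Your argument is correct and complete. The paper itself gives no proof of this theorem---it only cites Iyer and Menon---so there is nothing internal to compare against; for the record, your linear colouring $c(i,j)=(i+2j)\bmod n$ is essentially the classical construction. All the steps check out: invertibility of $2$ modulo $n$ makes each colour class a transversal of rows and columns; the descending-diagonal case reduces to $3t\equiv 0\pmod n$ and the anti-diagonal case to $-t\equiv 0\pmod n$, and together with $|t|<n$ both force $t=0$, so each class is an independent set in $G_{n,2}$ (note the diagonal check correctly uses integer differences bounded by $n$, so no toroidal wrap-around is smuggled in). Combined with the clique lower bound already stated in the text, this gives $\gamma(G_{n,2})=n$. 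Your colouring also reproduces the certificate of Table \ref{tab:chvatal_queensgraph_certificate_n5} up to a relabelling of colours (that table is $(3i+j)\bmod 5$, which is $2\cdot(i+2j)\bmod 5$), and the observation that the admissibility of the slopes $1,2,3$ mirrors the condition $\gcd(e_0+e_1a+e_2b,n)=1$ from the proof of Theorem \ref{Klarner} is exactly the right way to see why $\gcd(n,6)=1$ is the natural hypothesis here.
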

\begin{theorem}[Vasquez \cite{vasquez2006coloration}] \label{thm_vasquez}
    There exist infinitely many integers $n$ of the form $n = 2 \cdot p$ or $n=3 \cdot p$ for $p$ prime, such that the $n$-queens graph is $n$-colourable.
\end{theorem}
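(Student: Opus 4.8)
The plan is to translate the statement into the language of Proposition~\ref{cor_queensgraph} and prove it by an explicit, parametrized construction. By that proposition an $n$-colouring of $G_{n,2}$ is the same object as a map $c:\mathbb{Z}_n\times\mathbb{Z}_n\to\mathbb{Z}_n$ whose colour classes are the $n$ cells of a non-attacking configuration, i.e.\ a partition of the board into $n$ superimposable $(n,2)$-solutions. In these coordinates $c$ is a valid colouring exactly when it is a bijection along every row and along every column (the two Latin conditions) and when, within each colour class, no two cells lie on a common diagonal in either diagonal direction. I would first dispose of the obvious tool: the product construction of \cite{abramson1986construction} cannot reach the target numbers on its own, because factoring $n=2p$ or $n=3p$ would require an $r$-colouring with $r\in\{2,3\}$, and no $2$- or $3$-colouring of the corresponding board exists. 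Hence a direct construction for the two families $n=2p$ and $n=3p$ is unavoidable.

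Next I would attempt the natural algebraic construction and use its failure to locate the real content of the theorem. Writing $n=2p$ and splitting coordinates through $\mathbb{Z}_{2p}\cong\mathbb{Z}_2\times\mathbb{Z}_p$ as $i=(i_0,i_1)$, one is tempted to colour by a linear rule $a\,i_1+b\,j_1\bmod p$ on the $\mathbb{Z}_p$-part (legitimate for $p$ coprime to $6$ by Theorem~\ref{thm_iyer}) together with a parity colour $c_0$ on the $\mathbb{Z}_2$-part. The Latin conditions force $c_0$ to flip whenever $i_0$ flips and whenever $j_0$ flips; but under the linear $\mathbb{Z}_p$-rule the only diagonal pairs that can also share the $\mathbb{Z}_p$-colour are those whose coordinate differences are $\pm p$, and since $p$ is odd such a pair differs by $(1,1)$ in $\mathbb{Z}_2^2$, so avoiding a monochromatic diagonal forces $c_0$ to flip under the \emph{simultaneous} shift as well. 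These three demands on a function $\mathbb{Z}_2^2\to\mathbb{Z}_2$ are contradictory. This is precisely the parity obstruction responsible for the non-existence of modular solutions at even $n$, and overcoming it is the heart of the matter.

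The resolution I would pursue is to break the clean product structure by letting the $\mathbb{Z}_p$-component of the colour itself depend on the parities $(i_0,j_0)$, through a parity-dependent shift; this separates the dangerous $\pm p$-diagonal pairs in the $\mathbb{Z}_p$-component rather than in $c_0$, where the obstruction lives. The delicate part — and the step I expect to be the main obstacle — is that such a shift reintroduces possible collisions on the ordinary, non-wrapping diagonals at other coordinate differences, and the whole construction only succeeds once the shift parameters and the linear coefficients $a,b$ are tuned so that \emph{every} diagonal and anti-diagonal, across all coordinate differences, is collision-free simultaneously. I expect this tuning to reduce to a finite set of congruence conditions on $p$ modulo a fixed modulus, solvable precisely for $p$ in one or more admissible residue classes; the case $n=3p$ is handled by the analogous but heavier bookkeeping over $\mathbb{Z}_3$.

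Finally, the infinitude is cheap once a working congruence class is secured: the admissible residues are coprime to the modulus by construction, so Dirichlet's theorem on primes in arithmetic progressions yields infinitely many primes $p$ of the required form, hence infinitely many $n=2p$ and $n=3p$ with $G_{n,2}$ being $n$-colourable. As a consistency check, the small known certificates for $14=2\cdot 7$, $22=2\cdot 11$ and $21=3\cdot 7$ should appear as the first members of this family.
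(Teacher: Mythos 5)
This statement is imported from \cite{vasquez2006coloration}; the thesis itself gives no proof of it, so there is no internal argument to compare against. Judged on its own terms, your proposal is a reasonable research plan but not a proof, and the gap sits exactly where you flag it. The entire content of the theorem is the existence of a colouring that evades the parity obstruction you correctly identify, and your text stops at the point where that colouring would have to be written down: you introduce a ``parity-dependent shift'' of the $\mathbb{Z}_p$-component, concede that it reintroduces collisions on the ordinary (non-wrapping) diagonals, and then assert that you \emph{expect} the parameters to be tunable so that all diagonals and anti-diagonals are simultaneously collision-free, with the admissible $p$ forming some residue classes. Nothing in the proposal establishes that such a tuning exists, that the resulting conditions on $p$ are consistent, or that the admissible residues are coprime to the modulus --- the last point being asserted ``by construction'' even though no construction has been fixed. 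Since the Dirichlet step inherits all of these unverified premises, the conclusion of infinitude is not actually derived. A secondary soft spot: your claim that under a linear rule mod $p$ the only monochromatic diagonal pairs have coordinate difference $\pm p$ is an assertion about the non-modular board that itself depends on the choice of $a,b$ and needs to be proved, not assumed, before the obstruction analysis is airtight. To turn this into a proof you would need to exhibit the colouring explicitly (as Vasquez does in the cited work) and verify the row, column, and both diagonal conditions for it; the verification is finite for each $n$ but must be carried out uniformly in $p$ for the infinitude claim to follow.
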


\subsubsection{Stacking Superimposable Solutions}
Given sufficient upper bounds to $|Q_{max}(n,d|$ or searching for solutions for which $|Q_{max}(n,d|=n^{d-1}$ a possible approach to constructing maximal partial solutions is to stack superimposable solutions of the $(n,d-1)$-board. \\
First, consider the set of all solutions to the partial $(n,d-1)$-queens problem or a suitable chosen subset thereof. Then, find a feasible order of stacking those solutions that yields a maximal partial solution of the required size. \cite{Lyndenlea} follows this approach for the $(n,3)$-queens problem, choosing a subset of the regular solutions to the $(n,2)$-queens problem.

\newpage
\subsection{Lower Bounds} \label{subsec_lowerbounds3d}
\subsubsection{Subcube Heuristic}
We present a simple construction method for partial solutions on the $(n,3)$-board. This method achieves to construct the largest maximal partial solutions known for several of the instances in question. As it follows from the construction of solutions of Theorem \ref{Klarner}, it allows us to directly compute a lower bound calculating a closed expression for given $n$. In contrast to brute-force search algorithms or other computationally expensive heuristics, it does so for arbitrarily large $n$.

Given a partial solution to the $(n,3)$-queens problem of $k$ queens, which we suspect also to be a maximal partial solution, we may prove its maximality by showing that no configuration of $k+1$ queens exists using IP. This turns out to be a time-saving strategy compared to solving an integer program that tries to maximize the size of a valid configuration (see \ref{sectionip_formulation}) and allows us to make general statements about lower bounds. Additionally, partial solutions derived through this method may be used to warmstart the IPs.\\

\begin{theorem} \label{thm_subcubeheur_3d}
    For any given integer $m$, find suitable $n>m$, $n = m+k$ for which $\text{gcd}(n,210) = 1$. We can construct a partial solution of $n^2 - 3k \cdot n + 3k^2 - 2p$ queens on the $(m,3)$-board under the following condition: There exists a regular solution to the $(n,3)$-queens problem that contains a subcube of size $k$ with $p$ queens, meaning there are exactly $p$ queens placed in at least one $k \times k \times k$ subset of the board. 
\end{theorem}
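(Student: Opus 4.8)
The plan is to start from a regular solution to the $(n,3)$-queens problem guaranteed by Theorem \ref{Klarner}, which places $n^2$ non-attacking queens, and then to carve out the target $(m,3)$-board as a subcube and count how many queens survive this restriction. First I would fix the regular solution whose existence is assumed in the hypothesis, namely one containing a $k \times k \times k$ subcube holding exactly $p$ queens. Since $n = m + k$, removing a slab of thickness $k$ from each of the three axes (choosing the slabs so that their mutual intersection is precisely the $p$-queen subcube) leaves an $m \times m \times m$ subcube, and the queens remaining inside it form a valid partial solution on the $(m,3)$-board: they are a subset of a non-attacking configuration, and any sub-configuration of non-attacking queens is itself non-attacking, while trivially $\norm{Q} \le m$ after reindexing. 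So the structural part is immediate from Proposition \ref{prop_rooks_contained_in_queens}-type reasoning; the content of the theorem is purely the count.

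The core of the argument is therefore an inclusion-exclusion count of how many of the $n^2$ queens lie in the removed region. Let me denote by $A$, $B$, $C$ the three slabs of thickness $k$ that are deleted (one orthogonal to each coordinate axis). The number of queens we lose is $|A \cup B \cup C|$, and the number retained is $n^2 - |A \cup B \cup C|$. The key observation is that because the solution is a genuine $(n,3)$-queens solution, every layer in every dimension contains exactly $n$ queens (this is Proposition \ref{cor_superimpo}: each of the $n$ layers in a chosen dimension is an $(n,2)$-solution with exactly $n$ queens). Hence a slab of thickness $k$ meets exactly $k \cdot n$ queens, giving $|A| = |B| = |C| = k n$. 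For the pairwise intersections, a slab-intersection $A \cap B$ is a $k \times k \times n$ bar; since its projection onto the two thin directions is a $k \times k$ window and each of the $n$ layers in the long direction is a permutation-type configuration, I would argue each such bar contains exactly $k^2$ queens, so $|A \cap B| = |A \cap C| = |B \cap C| = k^2$. Finally $|A \cap B \cap C|$ is exactly the $k \times k \times k$ subcube, which by hypothesis holds $p$ queens. Inclusion-exclusion then yields
\begin{align*}
    |A \cup B \cup C| = 3kn - 3k^2 + p,
\end{align*}
and the retained count is $n^2 - 3kn + 3k^2 - p$.

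The main obstacle is the pairwise-intersection count $|A \cap B| = k^2$, which is slightly more delicate than the single-slab count: it asserts that within any $k \times k \times n$ bar, the queens project bijectively in a way that forces exactly $k^2$ of them. The clean way to see this is to use the rook structure underlying the queen solution, via Proposition \ref{cor_superimpo}: since each $z$-layer is an $(n,2)$-solution, the queens define, for each pair of $z$-coordinates, a permutation, and counting queens whose $x$- and $y$-coordinates both fall in a fixed $k$-window reduces to counting over the superimposable layers; I would make this precise by observing that the $n$ superimposable $(n,2)$-solutions partition the $x,y$-plane so that each $(x,y)$ cell is occupied exactly once across the stack, whence a $k \times k$ window of cells is occupied exactly $k^2$ times in total over all $n$ layers. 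I note that the theorem statement writes the final expression as $n^2 - 3k \cdot n + 3k^2 - 2p$ rather than $-p$; I would flag this discrepancy and verify the coefficient of $p$ carefully, since under the standard inclusion-exclusion the triple intersection enters with coefficient $+1$ and is subtracted once, giving $-p$. If the intended construction deletes the subcube queens an extra time (for instance because the retained board is re-embedded in a way that double-counts a boundary interaction), the $-2p$ could be correct, but establishing which count matches the construction is exactly the step that requires the most care.
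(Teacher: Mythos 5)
Your argument follows essentially the same route as the paper's: inclusion--exclusion over the three removed slabs, with $|L_x|=|L_y|=|L_z|=k\cdot n$ from the layer structure, pairwise slab intersections of size $k^2$, and a triple intersection of size $p$ by hypothesis. Your justification of the pairwise count (each line in the $z$-direction carries exactly one queen in a full solution, so a $k\times k\times n$ bar carries exactly $k^2$ of them) is in fact cleaner than the paper's, which only argues "cannot be less, cannot be more" somewhat informally. On the coefficient of $p$: your suspicion is warranted --- the paper's own proof writes the complement as $n^2-3kn+\bigl(|I_{xy}|+|I_{xz}|+|I_{yz}|+2|I_{xyz}|\bigr)$ with $|I_{xy}|=|I_{xz}|=|I_{yz}|=k^2-p$ and $|I_{xyz}|=p$, and concludes with $n^2-3kn+3k^2-p$, exactly the value your inclusion--exclusion gives; the $-2p$ in the theorem statement is an internal inconsistency of the paper rather than something your derivation fails to reproduce. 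One point you gloss over that the paper handles explicitly: the three deleted slabs must consist of \emph{outer} (or contiguous boundary) layers, since deleting interior layers and compressing coordinates would alter coordinate differences and can create new diagonal attacks; because the $p$-queen subcube may sit anywhere, one must first invoke the toroidal shift-invariance of regular solutions to move it to a corner ("we can shift the solution wlog" in the paper). Your write-up should state that step rather than leaving the slab placement implicit.
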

\begin{proof}
    By construction. The proof idea is to find the $(m,3)$-board within in one of the regular solutions to the $(n,3)$-queens problem, that contains the maximum amount of queens.

    \begin{figure}[H]
        \centering
        \includegraphics[width=0.6\linewidth]{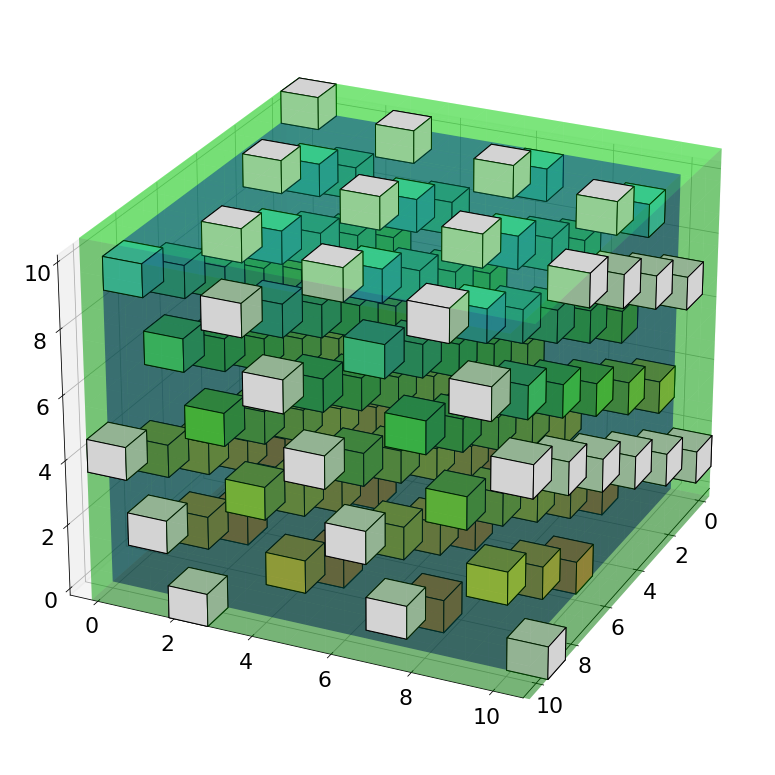}
        \caption{partial solution for the $(10,3)$-board (blue) contained in a solution to the $(11,3)$-queens problem (green) - cut off queens are light grey}
        \label{fig:subcube_example}
    \end{figure}
Given any solution to the $(n,3)$-queens problem, we recall that it contains $n^2$ queens and each of its layers contains $n$ queens. 
\begin{align*}
    |Q_{max}(n,3)| = n^2
\end{align*}
We obtain $Q_s(m,3)$, a partial solution for the $(m,3)$-board by removing $k$ layers in each dimension of the solution. The $k$ layers in each dimension have to be outer layers of the cube or adjacent layers, as we can shift the solution wlog. Let us call the set of queens in the removed $x$-layers $L_x$, the set of queens in the removed $y$-layers $L_y$ and the set of queens in the removed $z$-layers $L_z$.
\begin{align}
    |L_x| = |L_y| = |L_z| = k \cdot n
\end{align}
Our goal is to find a maximal subset of the initial solution that is a partial solution for the $(m,3)$-board, i.e. to minimize the queens removed, as:
\begin{align}
    |Q_s(m,3)| = |Q_{max}(n,3)| - |L_x \cup L_y \cup L_z|
\end{align}
There exists an intersection between the removed $x$-layers and $y$-layers, the $x$-layers and $z$-layers, the $y$-layers and $z$-layers and one intersection between all three. We define:
\begin{align}
    I_{xy} := (L_x \cap L_y) \setminus L_z \nonumber\\
    I_{xz} := (L_x \cap L_z) \setminus L_y \nonumber\\
    I_{yz} := (L_y \cap L_z) \setminus L_x \nonumber\\
    I_{xyz} := L_y \cap L_y \cap L_z
\end{align}
We can now write
\begin{align}
    |Q_s(m,3)| &= |Q_{max}(n,3)| - |L_x \cup L_y \cup L_z| \nonumber\\
    &= n^2 - |L_x \cup L_y \cup L_z| \nonumber\\
    &= n^2 - \big( |L_x| + |L_y| + |L_z| \big) + \big( |I_{xy}| + |I_{xz}| + |I_{yz}| + 2|I_{xyz}| \big) \nonumber\\
    &= n^2 - 3 \cdot k \cdot n + \big( |I_{xy}| + |I_{xz}| + |I_{yz}| + 2|I_{xyz}| \big)
\end{align}
$I_{xyz}$ is contained in a $k \times k \times k$ cube inside the initial $(n,3)$-board. If this subcube of size $k$ contains $p$ queens in the chosen solution to the $(n,3)$-queens problem, i.e. $|I_{xyz}|=p$, we can conclude that the $I_{xy}$, $I_{xz}$ and $I_{yz}$ all must contain $k^2-p$ queens each. They cannot contain less, as they respective rows/columns in their layers have to contain one queen (otherwise this leads to a contradiction as the total cube containing $n^2$ queens). And they cannot contain more due to maximality.
\begin{align}
    |Q_s(m,3)| &= n^2 - 3 \cdot k \cdot n + \big( |I_{xy}| + |I_{xz}| + |I_{yz}| + 2|I_{xyz}| \big) \nonumber \\
    &= n^2 - 3 \cdot k \cdot n + 3 \cdot (k^2 - p) + 2 \cdot p  \nonumber \\
    &= n^2 - 3 \cdot k \cdot n + 3 \cdot k^2 - p
\end{align}
Summarizing we conclude that we have removed $(3k \cdot n - 3k^2- p)$ queens from a solution to the $(n,3)$-queens problem and have obtained a partial solution for the $(m,3)$-board, containing $n^2 - 3k \cdot n + 3k^2- p$ queens.
\end{proof}
\newpage
In order to maximize the size of the derived partial solution, we want to minimize $p$, in particular for $p=0$ we get
\begin{corollary}\label{cor_empty_subcube_3d}
    For any given integer $m$, find suitable $n>m$, $n = m+k$ for which $\text{gcd}(n,210) = 1$. We can construct a partial solution of $n^2 - 3k \cdot n + 3k^2$ queens on the $(m,3)$-board under the following condition: There exists a regular solution to the $(n,3)$-queens problem that contains an empty subcube of size $k$. 
\end{corollary}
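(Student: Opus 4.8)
The plan is to obtain the statement as the $p = 0$ specialization of Theorem \ref{thm_subcubeheur_3d}. That theorem already establishes, by deleting $k$ adjacent outer layers in each of the three coordinate directions of a regular $(n,3)$-solution, that the surviving queens form a partial solution on the embedded $(m,3)$-board of cardinality
\begin{align*}
    |Q_s(m,3)| = n^2 - 3kn + 3k^2 - p,
\end{align*}
where $p$ counts the queens lying in the single $k \times k \times k$ corner subcube removed by all three deletions simultaneously (the triple-intersection term $|I_{xyz}|$ in that proof). Note that the hypothesis $\gcd(n,210)=1$ is exactly what guarantees, via Klarner's Theorem \ref{Klarner}, that a regular $(n,3)$-solution exists in the first place, so the construction has something to start from.

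Next I would observe that this count is monotonically decreasing in $p$, so that the largest partial solution produced by the construction corresponds to the smallest admissible $p$. The absolute minimum is $p = 0$, which by definition says that the chosen corner subcube of side length $k$ contains no queens, i.e. that it is \emph{empty}. Substituting $p = 0$ into the displayed formula removes the correction term and leaves exactly $n^2 - 3kn + 3k^2$, which is the asserted count.

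There is essentially no separate obstacle in the derivation itself: the corollary is a direct instantiation of the theorem once the side condition is granted. The genuine difficulty, which the statement deliberately hides inside its hypothesis, is \emph{verifying} that side condition — that among the regular solutions $q_3(q_1,q_2) = (a q_1 + b q_2) \bmod n$ to the $(n,3)$-queens problem (and their translates in the free coordinate) there is at least one possessing an empty $k \times k \times k$ corner subcube. The hard part in any concrete application will therefore be to pin down suitable parameters $a,b$ and a corner so that the queen-free subcube of the required size actually occurs; the arithmetic of the corollary follows unconditionally thereafter.
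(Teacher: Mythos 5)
Your proposal is correct and matches the paper exactly: the corollary is obtained there, without a separate proof, simply as the $p=0$ instantiation of Theorem \ref{thm_subcubeheur_3d}, with the remark that minimizing $p$ maximizes the size of the derived partial solution. Your additional observations (that $\gcd(n,210)=1$ supplies the regular solution via Theorem \ref{Klarner}, and that the real work lies in verifying the empty-subcube hypothesis) are consistent with the paper's surrounding discussion of when empty subcubes of size $k$ exist.
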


Note that the existence of an empty subcube of size $k$ is trivial for $k=1$. An empty subcube of size $2$ fits in any regular solution constructed following Theorem \ref{Klarner} using the choice of parameters $a=3$ and $b=5$. Larger subcubes may fit depending on the choice of $a,b$. However, this is dependent on $n$, as larger $n$ allow for more choices of $a,b$. 

Also note that a slight variant corollary \ref{cor_empty_subcube_3d} can be applied to obtain a bound for the $(m,3)$ board from any partial solution of the $(n,3)$ board, $m < n$, without the latter having to be a maximal partial solution. Here, we additionally require the empty subcube to be adjacent to a corner of the $(n,3)$ board. We can then always construct a subset of a  partial solution on the $(n-1,3)$-board, given a partial solution on the $(n,3)$-board for which 
\begin{align*}
    |Q_{s}(n-1,3)| \geq |Q_{p}(n,3)| - 3 \cdot n + 3 
\end{align*}   
In this case, the result might not be the best lower bound achievable, as it is easily demonstrated by the fact that applying the heuristic twice with $k=1, p=0$ is strictly worse than applying it once with $k=2, p=0$:
\begin{align}
    |Q_{max}(n,3)| &= n^2 \nonumber \\
    |Q_s(n-1,3)| &= n^2 - 3 \cdot n + 3 \nonumber\\
    |Q_s(n-1-1,3)| &= |Q_s(n-1,3)| - 3 \cdot (n-1) + 3 \nonumber\\
    &= n^2 - 3 \cdot n + 3 - 3 \cdot (n-1) + 3 \nonumber\\
    &= n^2 - 6 \cdot n + 9 \nonumber\\
    &< n^2 - 6 \cdot n + 12 \nonumber\\
    &= n^2 - 3 \cdot 2 \cdot n + 3 \cdot 2^2 \nonumber\\
    &= |Q_s(n-2,3)|
\end{align}
This underestimation is due to the fact that the partial solutions to which the heuristic is applied do not contain a queen in each row or column, and thus, the corresponding statement in the construction proof becomes an inequality (lower bound) instead of an equality. 

The following table \ref{table_subcube3d} compares the configurations derived from this heuristic for $p=0$ and $k=1$, $k=2$ with the best-known results to the partial $(n,d)$-queens problem. Entries that are proven maximal are printed in bold. All other entries are taken from the results of \cite{ZimmermannContest}.

\begin{table}[H]
\begin{center}
\noindent
\begin{tabularx}{0.4\textwidth}{*{4}{r}}
    \hline 
    $n$ & current & $k=1$ & $k=2$ \\ \hline%\hline
    9 &  \textbf{67}  &    &67\\ %\hline
    10 &  \textbf{91} & 91 &\\% \hline
    11 & \textbf{121} & &\\% \hline
    12 & \textbf{133} &133 & \\% \hline
    13 & \textbf{169} & &\\% \hline
    14 &  \text{172}  & &\\ %\hline
    15 &  \text{199}  &   &199\\ %\hline
    16 &  \text{241}  &241&\\ %\hline
    17 &  \textbf{289}& &\\ %\hline
    18 &  \text{307}  &307&\\ %\hline
    19 &  \textbf{361}& &\\ %\hline
    20 &  \text{364}  & &\\ %\hline
    21 &  \text{405}  & &403\\ %\hline
    22 &  \text{463}  &463&\\ %\hline
    23 &  \textbf{529}& &\\ \hline
    %... &  & &\\ \hline
\end{tabularx}
\caption{\label{table_subcube3d}lower bounds for $|Q_{max}(n,3)|$}
\end{center}
\end{table}
The heuristic matches the results from \cite{ZimmermannContest} for all bounds obtained with $k=1$ and up to $n=15$ with $k=2$. However, for $n=21$, a partial solution $Q$ with $405$ queens exists. As $405 > 403 = |Q|$, which would be the maximal subset of a solution to the $(23,3)$-queens problem that is a partial solution for the  $(21,3)$-board, we know that the configuration of $405$ queens cannot be such a subset. Notably, this difference of two queens on the $(21,3)$-board turned out to be the deciding difference between 1st and 2nd place in Al Zimmermann's contest \cite{ZimmermannContest}.

    \begin{figure}[H]
        \centering
        \includegraphics[width=0.65\linewidth]{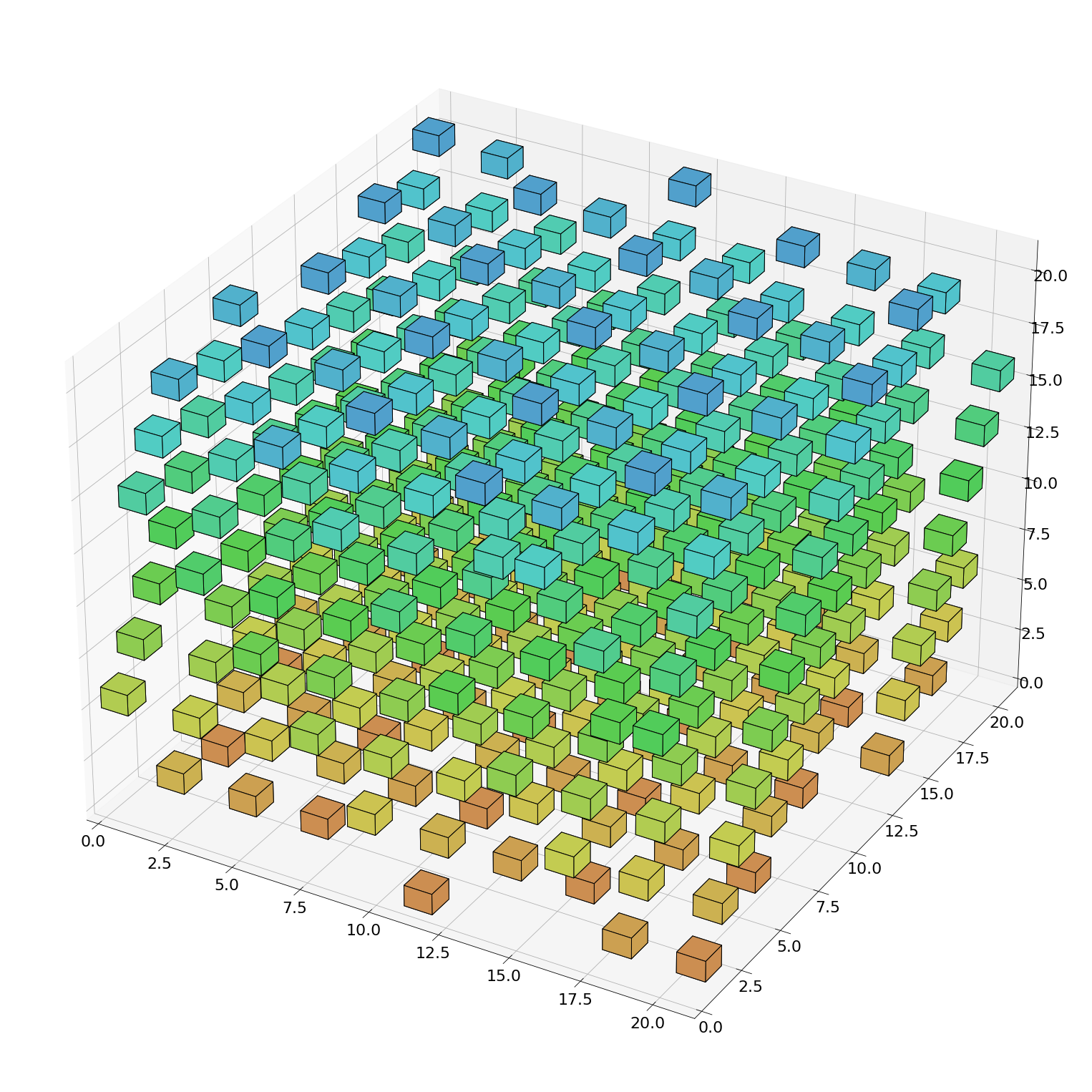}
        \caption{partial solution of $405$ queens on the $(21,3)$-board}
        \label{fig:n21_d3_405queens}
    \end{figure}

Further, we may now state lower bounds for $n$ that exceed the instances that have been tackled so far (see table. \ref{3d_lowerbounds_2}). It is unlikely that configurations derived through this method are also maximal partial solutions for large $n$, as demonstrated by the configuration for the $(21,3)$-board. Instead, we expect the maximal partial solutions for larger $n$ not to be subsets of regular solutions of larger boards. 
\begin{table}[H]
\begin{center}
\noindent
\begin{tabularx}{0.4\textwidth}{*{4}{r}}
    \hline 
    $n$ & current & $k=1$ & $k=2$ \\ \hline%\hline
    27 &  \text{}  &    &697\\ %\hline
    28 &  \text{} & 757 &\\% \hline
    29 & \textbf{841} & &\\% \hline
    30 & \textbf{} &871 & \\% \hline
    31 & \textbf{961} & & \\% \hline
    ... & \textbf{} & &\\% \hline
    35 &  \text{}  & &1159\\ %\hline
    36 &  \text{}  &  1261 &\\ %\hline
    37 &  \textbf{1369}  &   &\\ %\hline
    ... &  \text{}  &&\\ %\hline
    39 &  \textbf{}& &1447\\ %\hline
    40 &  \text{}  &1561&\\ %\hline
    41 &  \textbf{1681}  &&\\ %\hline
    ... &  \text{}  &&\\ \hline
\end{tabularx}

\caption{\label{3d_lowerbounds_2}lower bounds for $|Q_{max}(n,3)|$}
\end{center}
\end{table}

We may now revisit the question inspired by \cite{mccarty1978queen}:

\textit{For given $n_0$, can we find a lower bound $\alpha_{n_0}$ for $R(n)$, such that:}
\begin{align}
    \alpha_{n_0} \leq R(n) = \frac{|Q_{max}(n,3)|}{n^2} \text{ , } \forall n \geq n_0
\end{align}\\
Repeatedly applying the described heuristic to regular solutions to the $(n.3)$-queens problem 
one can also easily check, that $\lim_{n_0 \to \infty} \alpha_n = 1$.\\ 
\begin{corollary}\label{cor_limit_alpha}
For all $n$, $|Q_{max}(n,3)| \geq n^2 -10n - 32$
\end{corollary}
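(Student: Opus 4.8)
The plan is to sandwich the target board size $n$ between itself and a slightly larger size $N$ on which a full solution is guaranteed by Klarner's construction, and then to carve the $(n,3)$-board out of that solution with the subcube heuristic. First I would dispose of the small cases: since $n^2 - 10n - 32 \le 0$ for $n \le 12$ while $|Q_{max}(n,3)| \ge 0$ always, it suffices to treat $n \ge 13$.

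The one genuinely external input is a sharp bound on the spacing of the integers coprime to $210 = 2\cdot3\cdot5\cdot7$. I would use that this spacing never exceeds $10$ — equivalently, the longest run of consecutive integers each divisible by one of $2,3,5,7$ has length $9$ (witnessed by $2,\dots,10$ and, via the symmetry $x\mapsto 210-x$, by $200,\dots,208$), and that no run of length $10$ exists, which is a finite verification modulo $210$ (this is the Jacobsthal value $g(210)=10$). Consequently, for every $n$ there is an integer $N$ with $n<N\le n+10$ and $\gcd(N,210)=1$; set $k:=N-n\in\{1,\dots,10\}$. Theorem \ref{Klarner} then supplies a solution of the $(N,3)$-queens problem with $N^2$ queens.

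I would then descend from the $(N,3)$-board to the $(n,3)$-board using the construction in the proof of Theorem \ref{thm_subcubeheur_3d}: after shifting the solution so that the removed slabs are the $k$ outermost layers in each axis, one has the exact count
\begin{align*}
    |Q_s(n,3)| = N^2 - 3kN + 3k^2 - p,
\end{align*}
where $p$ is the number of queens in the corner $k\times k\times k$ subcube. The only property of $p$ I need is the trivial estimate $p\le k^2$: the corner's $k\times k$ footprint in any coordinate plane lies on exactly $k^2$ axis-parallel lines, each carrying a single queen of the full solution, so at most $k^2$ of those queens can fall inside the corner. Substituting $N=n+k$ gives
\begin{align*}
    |Q_{max}(n,3)| \;\ge\; N^2 - 3kN + 2k^2 \;=\; n^2 - nk \;\ge\; n^2 - 10n,
\end{align*}
since $k\le 10$, and $n^2-10n \ge n^2-10n-32$, which is the claim (in fact with room to spare).

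The main obstacle is really to locate where the stated constant $-32$ is forced, since the crude $p\le k^2$ already eliminates it. If one insists on using only \emph{guaranteed} emptiness — Corollary \ref{cor_empty_subcube_3d} provides an empty subcube only up to size $2$ — then for $k>2$ one cannot take $p=0$, and I would instead make one clean jump of two layers with the empty $2$-subcube and then peel off the remaining $k-2$ layers one at a time via the single-layer variant, which loses at most $3b-3$ queens when descending from board size $b$. Carrying out this bookkeeping in the worst case $k=10$ yields precisely $n^2-10n-32$, with every smaller $k$ strictly better; this is presumably the route behind the stated constant. The delicate point in that version is checking that the single-layer peeling can always be arranged (an empty corner cell in the current partial solution), whereas the direct estimate above sidesteps the issue entirely.
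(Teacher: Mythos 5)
Your proof is correct, and the descent step is genuinely different from (and sharper than) the one in the paper. The paper also starts from the gap bound of $10$ for integers coprime to $210$ and a Klarner solution (Theorem \ref{Klarner}) on the $(N,3)$-board, but it then descends by iterating the heuristic with \emph{guaranteed empty} subcubes: one application of Theorem \ref{thm_subcubeheur_3d} with $k=2$, $p=0$ (an empty $2$-subcube always exists, cf.\ Corollary \ref{cor_empty_subcube_3d}) followed by eight applications with $k=1$, $p=0$. Because the intermediate boards carry only partial solutions, each single-layer peel is merely the inequality $|Q_{s}(b-1,3)|\geq |Q_{p}(b,3)|-3b+3$, and the accumulated losses are exactly what produces the constant $-32$; you reconstructed this bookkeeping correctly in your final paragraph. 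Your route instead applies the heuristic once, with $k=N-n\leq 10$, to the full solution, where the count $N^2-3kN+3k^2-p$ is exact, and uses only the trivial estimate $p\leq k^2$ (each of the $k^2$ axis-parallel lines through the corner subcube carries exactly one queen of the full solution). This collapses to $(N-k)(N-2k)=n^2-nk\geq n^2-10n$, which is stronger than the stated bound and sidesteps the delicate point you flag about arranging emptiness at every intermediate stage. One caveat worth recording: the displayed statement of Theorem \ref{thm_subcubeheur_3d} reads $-2p$, while its proof derives $-p$ (the version you use); with $-2p$ your one-shot estimate would only give $n^2-10n-k^2\geq n^2-10n-100$, which does not imply the corollary. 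Your argument therefore implicitly relies on the $-p$ version being the correct one --- which it is, by the computation $3(k^2-p)+2p=3k^2-p$ in that proof.
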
 
\begin{proof}
    The next largest $m>n$ with $gcd(m,210)=1$ is at most $m=n+10$. Apply the lower bound from theorem \ref{thm_subcubeheur_3d} once with $k=2, p=0$ and then 8 times with $k=1, p=0$. 
\end{proof}
Note that this bound can be further improved for sufficiently large $n$.\\

\begin{corollary} \label{cor_alpha_limit_3d}
$ \lim_{n_0 \to \infty} \alpha_{n_0} = 1$
\end{corollary}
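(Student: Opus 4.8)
The plan is to sandwich $\alpha_{n_0}$ between two explicit sequences that both converge to $1$, using the uniform lower bound of Corollary \ref{cor_limit_alpha} together with the trivial upper bound on maximal partial solutions. First I would pin down the interpretation of $\alpha_{n_0}$ demanded by the statement: since the preceding display asks for a \emph{lower bound} $\alpha_{n_0}$ valid for all $n \geq n_0$, the meaningful (tightest) such quantity is $\alpha_{n_0} = \inf_{n \geq n_0} R(n)$, and it is this reading for which the limit is asserted. With that fixed, the whole argument reduces to controlling $R(n) = |Q_{max}(n,3)|/n^2$ from above and below uniformly in $n$.

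For the upper side, I would recall the trivial bound $|Q_{max}(n,3)| \leq n^2$ from \cite{mccarty1978queen}, giving $R(n) \leq 1$ for every $n$ and hence $\alpha_{n_0} \leq 1$ for every $n_0$. For the lower side, I would invoke Corollary \ref{cor_limit_alpha}, which supplies $|Q_{max}(n,3)| \geq n^2 - 10n - 32$ for all $n$. Dividing by $n^2$ yields
$$R(n) \geq 1 - \frac{10}{n} - \frac{32}{n^2}.$$
The right-hand side is strictly increasing in $n$ for $n > 0$ (its derivative $10/n^2 + 64/n^3$ is positive), so for every $n \geq n_0$ it is bounded below by its value at $n_0$, whence
$$\alpha_{n_0} = \inf_{n \geq n_0} R(n) \geq 1 - \frac{10}{n_0} - \frac{32}{n_0^2}.$$

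Combining the two estimates produces the squeeze
$$1 - \frac{10}{n_0} - \frac{32}{n_0^2} \leq \alpha_{n_0} \leq 1,$$
and letting $n_0 \to \infty$ forces $\lim_{n_0 \to \infty} \alpha_{n_0} = 1$. I do not expect any genuine obstacle once Corollary \ref{cor_limit_alpha} is available, since the result is then a one-line squeeze; the only point deserving care is the definition of $\alpha_{n_0}$ itself, because the argument needs $\alpha_{n_0}$ to be no larger than the infimum of $R(n)$ over $n \geq n_0$. Under that "best lower bound" convention — the one forced by the displayed quantifier — the statement follows immediately from the uniform density estimate already established.
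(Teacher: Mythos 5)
Your proposal is correct and follows essentially the same route as the paper: a squeeze of $\alpha_{n_0}$ between the trivial upper bound $R(n)\leq 1$ and the lower bound $1 - 10/n - 32/n^2$ obtained from Corollary \ref{cor_limit_alpha}. Your version is in fact slightly more careful than the paper's one-line display, since you make explicit the reading $\alpha_{n_0}=\inf_{n\geq n_0}R(n)$ and the monotonicity needed to pass from the pointwise bound to the infimum.
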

\begin{proof}
    \begin{align*}
        1 \geq \lim_{n_0 \to \infty} \alpha_{n_0} = \liminf_{n_0 \to \infty} R(n) \geq \lim_{n_0 \to \infty} \frac{n_0^2 -10n_0 - 32}{n^2} = 1
    \end{align*}
\end{proof}

\newpage
\subsection{Enumeration of Solutions} \label{section_enumeration_general_d}
\subsubsection{Lower Bounds} \label{subsubsec_lowerbounds_3d}
Through the construction of regular solutions following theorem \ref{Klarner}, we obtain a lower bound on the number of solutions for the $(n,3)$-queens problem for $n$ with $gcd(n,210)=1$. For given $n$, we may fix one of the shifting classes and observe that there are $12$ possible directions for the stepping pattern (4 in each dimension as for $(\pm a, \pm b)$). There are $n \cdot c(n)$ such regular solutions for all $n$, where $c(n)$ is a function that describes how many choices of $a,b$ there are for given $n$ considering symmetries. 

Table \ref{table_enumeration_d3} shows the currently known exact results for the number of solutions for both full and maximal partial solutions (in cursive). Results are computed through the IP formulation described in section \ref{sectionip_formulation}.\\
\begin{table}[H]
\begin{center}
\noindent
\begin{tabularx}{0.37\textwidth}{*{4}{r}}
    \hline 
    $n$ & $\mathcal{Q}(n,3)$ &$n \cdot {c(n)}$ &  \\ \hline%\hline
    1 &  \textit{1}  &    &\\ %\hline
    2 &  \textit{8} &  &\\% \hline
    3 & \textit{16} & &\\% \hline
    4 & \textit{1344} & & \\% \hline
    5 & \textit{1056} & & \\% \hline
    6 & \textit{912} & &\\% \hline
    7 &  \textit{96}  & &\\ %\hline
    8 &  \text{}  &   &\\ %\hline
    9 &  \textbf{}  &   &\\ %\hline
    10 &  \text{}  &&\\ %\hline
    11 &  264& $= 11 \cdot 12 \cdot 2$&\\ %\hline
    12 &  \text{}  &&\\ %\hline
    13 &  624&  $= 13 \cdot 12 \cdot 4$ &\\ %\hline
    14 &  \text{}  &&\\ \hline
\end{tabularx}
\caption{\label{table_enumeration_d3}Known sequence of $\mathcal{Q}(n,3)$ and partial solutions}
\end{center}
\end{table}
\cite{Lyndenlea} implemented a method that gives a lower bound on  $\mathcal{Q}(n,3)$ by exhaustively computing all regular solutions. The results are listed in table \ref{table_enumeration_d3_2}. Note, however that they may only represent a subset of the solutions.
\begin{table}[H]
\begin{center}
\noindent
\begin{tabularx}{0.35\textwidth}{*{2}{r}*{1}{l}}
    \hline 
    $n$ & lb.& $n \cdot c(n)$   \\ \hline%\hline
    17 & 2,040  & $17 \cdot 12 \cdot 10$ \\%\hline
    19 &  3,192 & $19 \cdot 12 \cdot 14$ \\%\hline
    23 &  6,624 & $23 \cdot 12 \cdot 24$ \\%\hline
    29 & 35,496 & $29 \cdot 12 \cdot 102$  \\% \hline
    31 & 19,344 & $31 \cdot 12 \cdot 52$ \\ \hline
\end{tabularx}
\caption{\label{table_enumeration_d3_2}Lower bound on $\mathcal{Q}(n,3)$ by \cite{Lyndenlea}}
\end{center}
\end{table}
For general statements on upper bounds see section \ref{subsubsec_upperbounds_generald}.

\newpage
\subsection{Open Questions}
In the following, we summarize and connect some of the previously mentioned open questions regarding the $(n,3)$-queens problem.\\
\begin{question}[Ch\`atal \cite{chvatalonline}]
    Does there exists $n_0$, such that for all $n \geq n_0$ the $(n,2)$-queen graph is $n$-colourable?
\end{question}
Recall that this is equivalent to the existence of $n$ superimposable solutions. Thus, the existence of an $n$-colouring is necessary for the existence of a solution to the $(n,3)$-queens problem. \\

\begin{question}[Van Rees]
    Is $\text{gcd}(n,210)=1$ a necessary condition for the $(n,3)$-queens problem to have a solution?
\end{question}

\begin{question}[]
    Are there non-regular solutions to the $(n,3)$-queens problem?
\end{question}
So far there are no certificates of non-regular solutions. Note that answering Question 1 might also help provide insights towards Question 3. We observed that if the only solutions existing are regular, $qc(n,3)$ would be constant and not depend on $n$.\\

\begin{question}
Does there exist an equivalent to the $n$-queens constant (see Theorem \ref{thm_simkin_nobel}) for $d=3$ for all $n$ with $gcd(n.210)=1$. 
Is there a relation between this constant and the $n$-queens constant?
\end{question}
This connects to the question of whether there exist non-regular solutions for sufficiently large $n$ and the enumeration of such solutions.\\

\begin{question}[]
    Does the density of the solutions to the partial $(n,3)$-queens problem converge for $n \to \infty$? 
\end{question}
Recall from corollary \ref{cor_limit_alpha} that for $n \to \infty$, one can place $n^2$ queens (in limit). The density of solutions is of particular interest for $d \geq 3$, as we differentiate between maximal partial solutions with $|Q_{max}(n,3)| < n^2$ who present a density reminiscent of the density for the $(n,2)$-queens problem as described by \cite{Simkin2021} and solutions with $|Q_{max}(n,3)| = n^2$ which so far showcase an equal distribution due to their regularity.

\newpage
\section{Generalization to Higher Dimensions}\label{sec_gereral_dimension}
\begin{theorem}[Nudelman \cite{Nudelman1995}]\label{theorem_nudelmanattackingplanes_higherdim}
    A queen on the $(n,d)$-board attacks in $\frac{1}{2}(3^d-1)$ hyperplanes.
\end{theorem}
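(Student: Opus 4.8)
The plan is to count the directions of attack directly from the definition of a queen. Every line along which the queen attacks is encoded by a nonzero displacement vector $\epsilon = (\epsilon_1,\dots,\epsilon_d)$ with each $\epsilon_i \in \{-1,0,1\}$: the square $(a_1,\dots,a_d)$ is attacked from $(b_1,\dots,b_d)$ precisely when $a_i = \epsilon_i m + b_i$ for all $i$ and some $m \in \mathbb{Z}$. Thus the set of attacked squares splits into the sets $\{(b_1+\epsilon_1 m,\dots,b_d+\epsilon_d m) : m \in \mathbb{Z}\}$, one for each admissible $\epsilon$, and these are exactly the hyperplanes of attack in the statement. The whole argument is therefore a counting problem on admissible vectors $\epsilon$.

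First I would count the admissible vectors outright: there are $3^d$ vectors in $\{-1,0,1\}^d$, and discarding the zero vector, which is excluded since $\epsilon$ must be nonzero, leaves $3^d - 1$. Next I would observe that $\epsilon$ and $-\epsilon$ trace out the same hyperplane. Substituting $m \mapsto -m$ gives $\{b + \epsilon m : m\in\mathbb{Z}\} = \{b + (-\epsilon)m : m\in\mathbb{Z}\}$, so the map $\epsilon \mapsto -\epsilon$ sends each direction to an identical line of attack. Since $\epsilon$ is nonzero we have $\epsilon \neq -\epsilon$, so this map is a fixed-point-free involution on the $3^d - 1$ nonzero vectors and pairs them off two at a time.

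The only point requiring a little care—and the step I expect to be the main obstacle—is to rule out further coincidences, i.e. to show that two admissible vectors which are not negatives of one another always give genuinely distinct hyperplanes. This follows because each admissible $\epsilon$ is a primitive integer vector: its nonzero entries are $\pm 1$, so their greatest common divisor is $1$. Two primitive integer vectors span the same line through a common point only if they are equal or opposite, and the bound $|\epsilon_i|\le 1$ rules out any other scalar multiple. Hence there are no coincidences beyond the sign pairing, and the $3^d - 1$ nonzero vectors fall into exactly $\tfrac{1}{2}(3^d - 1)$ classes, one per hyperplane.

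Assembling these observations yields the claimed count $\tfrac{1}{2}(3^d-1)$. As a sanity check, $d=2$ gives $\tfrac{1}{2}(3^2-1)=4$, matching the row, column, and two diagonal directions of the ordinary two-dimensional queen, and $d=3$ gives $\tfrac{1}{2}(3^3-1)=13$.
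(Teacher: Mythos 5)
Your proposal is correct and follows essentially the same route as the paper: count the $3^d-1$ nonzero vectors $\epsilon \in \{-1,0,1\}^d$ and identify $\epsilon$ with $-\epsilon$ since they define the same hyperplane of attack. Your additional remark that the $\epsilon$ are primitive vectors, so no coincidences beyond the sign pairing can occur, is a small amount of extra care that the paper's proof leaves implicit.
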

\begin{proof}
    Recall the definition of attack lines for the modular $(n,d)$-queens problem (Problem 2). Consider all $\epsilon = (\epsilon_1, ... , \epsilon_d)$, there are $3^d-1$ non-zero possibilities. $\epsilon$ and $-\epsilon$ however define the same hyperplane of attack.
\end{proof}
\begin{corollary} \label{cor_max_attacking_squares}
    A $d$-dimensional queen on the $(n,d)$-board attacks up to $\frac{1}{2}(3^d-1) \cdot (n-1) +1 $ squares (including its own square).
\end{corollary}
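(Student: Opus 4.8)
The plan is to multiply the count of attack directions supplied by Theorem \ref{theorem_nudelmanattackingplanes_higherdim} by the maximal number of squares on a single line of attack, after first verifying that distinct lines of attack overlap only at the queen's own square, so that no attacked square is counted twice.

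First I would fix the queen at a position $a = (a_1,\ldots,a_d)$ and recall from Theorem \ref{theorem_nudelmanattackingplanes_higherdim} that its lines of attack are indexed by the nonzero directions $\epsilon \in \{-1,0,1\}^d$ taken up to sign, of which there are exactly $\frac{1}{2}(3^d-1)$. Each such line is the set of board squares of the form $a + m\epsilon$ with $m \in \mathbb{Z}$ and $a + m\epsilon \in \{1,\ldots,n\}^d$. Next I would bound the length of a single line: since $\epsilon \neq 0$, some coordinate $\epsilon_i \in \{-1,1\}$, and the constraint $a_i + m\epsilon_i \in \{1,\ldots,n\}$ confines $m$ to at most $n$ consecutive integers. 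Hence each line meets the board in at most $n$ squares, and discarding the queen's own square (the case $m = 0$) leaves at most $n-1$ genuinely attacked squares per line.

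The step that actually requires an argument is the disjointness bookkeeping. I would show that any two distinct lines of attack share only the square $a$: both lines pass through $a$, and two directions that differ by more than a sign are non-parallel, so the two collinear sets through the common point $a$ intersect in $a$ alone. Consequently the sets of attacked squares other than $a$, one for each of the $\frac{1}{2}(3^d-1)$ directions, are pairwise disjoint. Summing the per-line bound of $n-1$ over all directions and adding back the queen's own square exactly once yields
\begin{align*}
    \tfrac{1}{2}(3^d-1)\,(n-1) + 1
\end{align*}
as an upper bound on the number of squares attacked (including its own).

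The main obstacle is really only this disjointness claim; the length bound is immediate and the direction count is inherited from the preceding theorem. I would close by remarking that the quantifier ``up to'' is justified because the bound is in fact attained: for odd $n$ the central queen, being equidistant from every face, has a full line of length $n$ in every direction, so it attacks precisely $\frac{1}{2}(3^d-1)(n-1) + 1$ squares, whereas near the boundary the diagonal directions fall short and the count drops.
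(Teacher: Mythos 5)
Your proposal is correct and follows essentially the same approach as the paper: the paper's proof consists only of the attainment half (placing a queen at the centre $(\frac{n+1}{2},\ldots,\frac{n+1}{2})$ for odd $n$ so that every line of attack has full length $n$), taking the upper bound for granted from Theorem \ref{theorem_nudelmanattackingplanes_higherdim}. Your version is more complete in that it also spells out the disjointness of distinct attack lines through the queen's square, which justifies summing $n-1$ over the $\frac{1}{2}(3^d-1)$ directions without double counting (and you avoid the paper's slip of writing $3^d-1$ rather than $\frac{1}{2}(3^d-1)$ in its proof).
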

\begin{proof}
    For any odd $n$, place a queen on $(\frac{n+1}{2},\frac{n+1}{2},...,\frac{n+1}{2})$. It attacks all $n$ squares in each of its $3^d-1$ hyperplanes of attack. 
\end{proof}
From this we can conclude that for $n=3$, the minimal dominating set problem on the has domination number $\gamma(G_{3,d})=1$ $\forall d$, as one queen can attack the entire board:
\begin{align}
    \frac{1}{2}(3^d-1) \cdot (3-1) +1 = 3^d
\end{align}
One can easily check this by placing a queen on $(2,2,...,2)$, as it is also mentioned by \cite{langlois2022complexity}.
\begin{corollary}
    A $d$-dimensional queen on the $(n,d)$-board attacks at least $(2^d-1)\cdot(n-1)+1$ squares (including its own square).
\end{corollary}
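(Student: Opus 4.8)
The plan is to mirror the proof of Corollary \ref{cor_max_attacking_squares}, but with the roles reversed. There the \emph{center} square was exhibited and the bound was immediate, since every line of attack contains at most $n$ squares. Here I must instead exhibit a \emph{minimizing} configuration and, crucially, prove that it really is the minimum over all positions. The natural candidate is a corner $q=(1,1,\dots,1)$: along a direction $\epsilon$ whose nonzero entries are all $+1$ the ray stays in the board and contributes $n-1$ new squares, whereas any direction containing a $-1$ immediately leaves the board and contributes nothing. Since there are exactly $2^d-1$ nonzero sign patterns in $\{0,1\}^d$, the corner attacks $(2^d-1)(n-1)+1$ squares, matching the claimed value. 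The real content is therefore the inequality for an \emph{arbitrary} position $q$.

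First I would recast the count in terms of \emph{rays}. Writing $q=(q_1,\dots,q_d)$, for each nonzero $\epsilon\in\{-1,0,1\}^d$ let $r(\epsilon)$ denote the number of board squares of the form $q+m\epsilon$ with $m\ge 1$. Because the entries of $\epsilon$ lie in $\{-1,0,1\}$, distinct directions produce disjoint sets of squares, so the number of attacked squares is exactly $1+\sum_{\epsilon\neq 0}r(\epsilon)$, the sum running over all $3^d-1$ directions (each of the $\tfrac12(3^d-1)$ hyperplanes of Theorem \ref{theorem_nudelmanattackingplanes_higherdim} splitting into a pair $\pm\epsilon$). A short computation gives $r(\epsilon)=\min_{i\in S} g_i(\epsilon_i)$, where $S=\{i:\epsilon_i\neq 0\}$ is the support and $g_i(1)=n-q_i$, $g_i(-1)=q_i-1$; note the identity $g_i(1)+g_i(-1)=n-1$.

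Next I would group directions by their support. Summing $r$ over the $2^{|S|}$ directions supported on a fixed nonempty $S$ reduces the entire problem to the elementary claim
\begin{align*}
\sum_{\sigma\in\{-1,1\}^{S}}\ \min_{i\in S} g_i(\sigma_i)\ \geq\ n-1 .
\end{align*}
I expect this to be the heart of the argument and the main obstacle: unlike the upper bound, one cannot bound each line from below in isolation (at the corner most lines are empty), so the $n-1$ must emerge only after averaging over the $2^{|S|}$ sign patterns. I would prove the claim by induction on $|S|$ with $C:=n-1$ fixed, using the elementary inequality $\min(A,x)+\min(A,C-x)\ge A$, valid whenever $0\le A\le C$ and $0\le x\le C$ (verified by splitting into the four sign cases), applied with $A=\min_{i\in S\setminus\{j\}}g_i(\sigma_i)$ and $x=g_j(1)$ for a chosen coordinate $j\in S$; the hypothesis $A\le C$ holds because every partial minimum is a minimum of values each at most $C$. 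The base case $|S|=1$ is exactly the identity $g_i(1)+g_i(-1)=n-1$.

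Finally, summing the claim over all $2^d-1$ nonempty subsets $S\subseteq\{1,\dots,d\}$ yields $\sum_{\epsilon\neq 0}r(\epsilon)\ge(2^d-1)(n-1)$, hence at least $(2^d-1)(n-1)+1$ attacked squares, with equality at any corner. A cleaner-looking but equivalent route to the key claim is the layer-cake identity $\sum_\sigma\min_{i\in S} g_i(\sigma_i)=\int_0^\infty\prod_{i\in S}c_i(t)\,dt$ with $c_i(t)=\mathbb{1}[g_i(1)>t]+\mathbb{1}[g_i(-1)>t]$, but the induction is shorter to write out.
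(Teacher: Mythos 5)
Your proof is correct, and it is genuinely more complete than the one in the paper. The paper's argument consists only of your opening paragraph: place the queen at $(1,1,\dots,1)$, observe that the surviving rays are exactly those indexed by the $2^d-1$ nonzero vectors $\epsilon$ with positive entries on their support, each contributing $n-1$ squares, and then simply \emph{assert} that the corner minimizes the total attack count. Since the corollary is a universal lower bound over all positions (and is used as such immediately afterwards, e.g.\ to show that any queen dominates the whole $(2,d)$-board), that assertion is the actual mathematical content, and the paper leaves it unproved. Your reduction closes exactly this gap: writing the attack count as $1+\sum_{\epsilon\neq 0}r(\epsilon)$ with $r(\epsilon)=\min_{i\in S}g_i(\epsilon_i)$, grouping by support $S$, and proving $\sum_{\sigma\in\{-1,1\}^S}\min_{i\in S}g_i(\sigma_i)\geq n-1$ by induction via $\min(A,x)+\min(A,C-x)\geq A$ (whose four-case verification checks out, as do the hypotheses $0\leq A\leq C$ and $g_i(1)+g_i(-1)=n-1$) yields the bound for an arbitrary square, with equality at the corner. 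The disjointness of distinct rays, needed so that the $r(\epsilon)$ simply add, also holds because $m\epsilon=m'\epsilon'$ with $m,m'\geq 1$ and entries in $\{-1,0,1\}$ forces $m=m'$ and $\epsilon=\epsilon'$. In short: same witness as the paper, but your per-support averaging lemma supplies the minimality argument the paper skips; if anything, it would strengthen the paper to include it.
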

\begin{proof}
    Place a queen on $(1,1,...,1)$. This minimizes the total squares it attacks for each pair of two orthogonal diagonal hyperplanes of attack. Its hyperplanes of attack are described by all non-zero vectors $\epsilon$ with positive entries. There are $2^d-1$ such vectors.
\end{proof}
From this corollary it trivially follows that any single queen on the $(2,d)$-board dominates the entire board:
\begin{corollary} \label{cor_n2_qmax}
    $|Q_{max}(2,d)| = 1$ for all $d$.
\end{corollary}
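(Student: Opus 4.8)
The statement $|Q_{max}(2,d)| = 1$ claims two things: that we \emph{can} place one non-attacking queen on the $(2,d)$-board (trivially true, since any single queen satisfies the non-attacking condition vacuously), and that we \emph{cannot} place two or more mutually non-attacking queens. So the plan is to prove the upper bound $|Q_{max}(2,d)| \leq 1$, which is equivalent to showing that any two distinct queens on the $(2,d)$-board attack each other.

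The cleanest route is to invoke the preceding corollary directly, as the phrasing ``trivially follows'' suggests. On the $(2,d)$-board we have $n=2$, so the lower bound on the number of dominated squares becomes $(2^d-1)\cdot(n-1)+1 = (2^d-1)\cdot 1 + 1 = 2^d$. Since the entire $(2,d)$-board consists of exactly $2^d$ squares, a single queen placed on $(1,1,\dots,1)$ already dominates \emph{every} square of the board. Consequently there is no square left on which a second queen could be placed without being attacked, and any configuration of two or more queens must contain an attacking pair. Hence $|Q_{max}(2,d)| = 1$.

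First I would record that the board has $2^d$ squares. Then I would apply the previous corollary with $n=2$ to the corner queen $(1,1,\dots,1)$, noting that $(2^d-1)(2-1)+1 = 2^d$ equals the total square count, so this queen dominates the whole board. To make the argument fully rigorous without relying on a special placement, I could alternatively give the direct combinatorial argument: for any two distinct squares $a=(a_1,\dots,a_d)$ and $b=(b_1,\dots,b_d)$ with entries in $\{1,2\}$, set $\epsilon_i = a_i - b_i \in \{-1,0,1\}$ and take $m=1$; then $a_i = \epsilon_i \cdot 1 + b_i$ holds for every $i$, and $\epsilon$ is nonzero because the squares differ, so $a$ attacks $b$ by the definition of the queen's move. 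Either version closes the proof immediately.

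There is essentially no obstacle here; the main point to get right is the bookkeeping that the lower bound on dominated squares \emph{coincides} with the total board size precisely when $n=2$, which is what forces domination of the whole board. The only subtlety worth flagging is that the cited corollary gives a lower bound on the attack count of a \emph{single specific} placement (the corner), so to conclude for all placements one should either note that the corner placement suffices to witness full domination (and hence that no independent set of size $2$ exists, which is what we need), or fall back on the explicit $m=1$, $\epsilon = a-b$ argument above, which handles every pair of squares uniformly and is the more self-contained of the two.
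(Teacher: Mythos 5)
Your proposal is correct and follows essentially the same route as the paper, which likewise proves the corollary by substituting $n=2$ into the preceding lower bound to get $(2^d-1)\cdot(2-1)+1 = 2^d$, the total number of squares, so that any queen dominates the entire board. Your additional direct argument with $m=1$ and $\epsilon = a - b$ is a fine self-contained alternative, but the paper's one-line computation is all that is needed.
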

\begin{proof}
    $(2^d-1)\cdot(2-1)+1 = 2^d$
\end{proof}

\subsection{Existence and Construction of Solutions}
\begin{proposition}[]
\label{cor_superimpo_3d}
Any solution $Q$ to the $(n,d)$-queens problem also yields $n$ superimposable solutions to the $(n,d-1)$-queens problem in each of the dimensions of the d-dimensional hypercube.  
\end{proposition}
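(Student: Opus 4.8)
The plan is to mimic the proof of Proposition~\ref{cor_superimpo} (the $d=3$ case) essentially verbatim, replacing the counting constants by their higher-dimensional analogues. Fix one dimension $j$ without loss of generality and partition the solution $Q$ into its $n$ layers $L_1, \dots, L_n$ in that dimension, where $L_i$ collects the queens whose $j$-th coordinate equals $i$. Dropping the $j$-th coordinate identifies each layer with a configuration on an $(n,d-1)$-board, and the first thing I would verify is that this projection preserves the non-attacking property: if two projected queens attacked in dimension $d-1$ via some nonzero $\epsilon'$ and some $m$, then setting $\epsilon_j = 0$ and keeping the remaining entries would lift this to an attack in dimension $d$ (the $j$-th equation holds automatically because both queens share the $j$-th coordinate), contradicting that $Q$ is non-attacking.

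Next I would carry out the counting. We have $|Q| = n^{d-1} = \sum_{i=1}^n |L_i|$. The key estimate is that each layer, viewed as an $(n,d-1)$-board, can hold at most $n^{d-2}$ non-attacking queens. This is where the argument genuinely differs from the $d=3$ case: there ``at most $n$ queens per $(n,2)$-board'' is immediate, whereas here I would invoke that a queen's moves contain a rook's moves (cf.\ Proposition~\ref{prop_rooks_contained_in_queens}), so that any non-attacking queen configuration is in particular a non-attacking rook configuration, and non-attacking rooks on an $(n,d-1)$-board number at most $n^{d-2}$ because the projection onto any fixed set of $d-2$ coordinates is injective (two rooks agreeing in all but one coordinate attack). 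With $n$ layers each capped at $n^{d-2}$ and the total pinned at $n^{d-1} = n \cdot n^{d-2}$, every inequality must be tight, forcing $|L_i| = n^{d-2}$ for all $i$; hence each layer is a genuine solution to the $(n,d-1)$-queens problem.

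Finally I would establish superimposability, i.e.\ that the $n$ projected solutions occupy pairwise disjoint squares of the $(n,d-1)$-board. Suppose two distinct layers $L_i, L_{i'}$ projected a queen onto the same square; lifting back gives two queens of $Q$ agreeing in all coordinates except the $j$-th, which attack each other already as rooks (take $\epsilon$ with its single nonzero entry in position $j$), again contradicting that $Q$ is non-attacking. I expect the only non-routine point to be the per-layer bound $n^{d-2}$: unlike in the $d=3$ case it is not self-evident and rests on the rook/Latin-square count. Everything else is a direct transcription of the $d=3$ proof, and as a side remark the same argument shows that the mere existence of an $(n,d)$-solution forces $|Q_{max}(n,d-1)| = n^{d-2}$.
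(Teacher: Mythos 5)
Your proof is correct and follows essentially the same route as the paper, which simply declares the argument ``analog to'' the $d=3$ case (Proposition~\ref{cor_superimpo}): partition $Q$ into $n$ layers, project, count $n^{d-1}=n\cdot n^{d-2}$ to force each layer to be full, and derive superimposability from the non-attacking property along the chosen dimension. Your added justifications --- that projection preserves non-attacking via $\epsilon_j=0$, and that the per-layer cap $n^{d-2}$ follows from the rook/permutation count rather than being immediate as in $d=3$ --- are exactly the details the paper's one-line proof leaves implicit.
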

\begin{proof}
    Analog to proposition \ref{cor_superimpo}.
\end{proof}

\begin{proposition}
\label{cor_queensgraph}
    $n^{(d-1)}$-colouring the $(n,d)$-queen graph is equivalent to finding $n$ superimposable solutions to the $(n,d)$-queens problem.
\end{proposition}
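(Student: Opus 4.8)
The plan is to run the same two-direction argument used for the planar Proposition, with ``row/column'' replaced by the axis-parallel lines of the $(n,d)$-board and the planar bound ``at most $n$ non-attacking queens per board'' replaced by the rook bound ``at most $n^{d-1}$ non-attacking queens per board.'' Throughout I read an $n^{d-1}$-colouring of the $(n,d)$-queen graph as a proper colouring whose colour classes attain the full size $n^{d-1}$, in the same way that, in the planar case, the colour classes attain the full size $n$.

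First I would record the two structural facts that drive both directions. Every axis-parallel line of $n$ squares (the squares agreeing in all but one coordinate) is a clique of the queen graph, since any two of its squares differ in a single coordinate and thus attack already as rooks. Moreover, a set of mutually non-attacking queens is in particular a set of non-attacking rooks, so by the permutation/Latin-square correspondence noted earlier for rooks it has at most $n^{d-1}$ elements; hence every independent set of the $(n,d)$-queen graph has size at most $n^{d-1}$, with equality exactly for the solutions of the $(n,d)$-queens problem (the conditions $|Q|=n^{d-1}$, $\norm{Q}\le n$, and mutual non-attack).

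For the direction from superimposable solutions to a colouring I would observe that a disjoint family $S_1,\dots,S_n$ of solutions satisfies $\sum_i|S_i|=n\cdot n^{d-1}=n^d$, the number of squares, so the family partitions the board; assigning colour $i$ to every square of $S_i$ then yields a proper colouring into classes of the full size $n^{d-1}$. This is the ``clear'' direction.

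The converse is the crux. Given an $n^{d-1}$-colouring, I would show that each colour class is a full solution by forcing the colouring to be balanced, exactly as in the planar contradiction argument: each of the $n^{d-1}$ lines in a fixed direction is a size-$n$ clique and therefore receives $n$ distinct colours, and counting colour occurrences line by line pins every class to meet each such line exactly once, i.e.\ to have size $n^{d-1}$. By the equality case above each class is then a solution, the classes are pairwise disjoint, and there are $n^d/n^{d-1}=n$ of them, giving $n$ superimposable solutions. The delicate bookkeeping --- turning ``every class has size at most $n^{d-1}$'' together with ``the classes partition $n^d$ squares'' into ``every class has size exactly $n^{d-1}$,'' so that no line is left short of a colour --- is the only real obstacle; everything else is a direct transcription of the planar proof.
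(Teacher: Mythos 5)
Your instinct that something must be reinterpreted is the tell: the statement as printed is not provable, and the step you flag as ``the only real obstacle'' is exactly where the transcription of the planar proof breaks. The planar contradiction argument works because the number of colours equals the clique size of a line: with $n$ colours, every row is an $n$-clique, so all $n$ colours appear on every row, and this pins each class to size $n$. In dimension $d\ge 3$ the palette has $n^{d-1}$ colours while a line is still only an $n$-clique, so ``each line receives $n$ distinct colours'' forces nothing: different lines may use disjoint palettes, and the global count gives only $n^{d-1}\cdot n^{d-1}=n^{2d-2}$, which strictly exceeds $n^d$ for $d\ge 3$, so the pigeonhole never closes. Indeed the backward implication is false as stated: $c(i,j,k)= i+2j+4k \bmod 16$ is a proper $16$-colouring of the $(4,3)$-queen graph --- for any attack displacement $m\epsilon$ with $1\le m\le 3$ and $a=(1,2,4)$ one checks $a\cdot\epsilon\in\{\pm1,\dots,\pm7\}$, so $m(a\cdot\epsilon)$ is never $\equiv 0 \pmod{16}$ --- yet $|Q_{max}(4,3)|=7<16$, so not even one solution to the $(4,3)$-queens problem exists, let alone four superimposable ones. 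Your opening move of ``reading'' an $n^{d-1}$-colouring as one whose classes all attain size $n^{d-1}$ does not repair this: it silently changes the hypothesis (such a colouring has $n^d/n^{d-1}=n$ colours, not $n^{d-1}$), and under that reading your converse assumes exactly the balance it sets out to force, so the line-by-line counting is circular.

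The statement your machinery actually proves --- and the correct generalization of the planar proposition, whose proof the paper gives while leaving the higher-dimensional version unproved --- replaces $n^{d-1}$ colours by $n$ colours: an $n$-colouring of the $(n,d)$-queen graph is equivalent to $n$ superimposable solutions to the $(n,d)$-queens problem. There your two structural facts suffice and the bookkeeping closes: each of the $n$ classes is independent, hence (already as a set of non-attacking rooks) of size at most $n^{d-1}$; the classes partition $n^d$ squares, so each has size exactly $n^{d-1}$ and is therefore a solution; conversely, $n$ disjoint solutions cover $n\cdot n^{d-1}=n^d$ squares and yield the colouring. The exponent must be matched in the class size, never in the number of colours; the planar case disguises this because $n^{d-1}=n$ when $d=2$, which is presumably how the misprinted colour count arose in the paper and why your transcription inherited it.
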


\begin{theorem}[Van Rees \cite{van1981latin}] \label{thm_vanrees_general_d}
    If $\text{gcd}(n,(2^d-1)!) = 1$ then $n^{d-1}$ non-attacking queens can be placed on the $(n,d)$-board, i.e. the $(n,d)$-queens problem has a solution.
\end{theorem}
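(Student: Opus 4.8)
The plan is to generalize the linear (regular) construction used for $d=3$ in Theorem \ref{Klarner}. I place a queen at every point of the form $(q_1,\dots,q_{d-1},L(q_1,\dots,q_{d-1}))$ whose last coordinate is the linear map
\begin{equation}
L(q_1,\dots,q_{d-1}) = \Big( \sum_{j=1}^{d-1} 2^{j}\, q_j \Big) \bmod n ,
\end{equation}
with $q_1,\dots,q_{d-1}$ ranging over all residues. Because the first $d-1$ coordinates are free and pairwise distinct across the construction, this yields exactly $n^{d-1}$ distinct queens with $\norm{Q}\le n$, so conditions (i) and (ii) of Problem 1 hold automatically and the whole argument lies in verifying the non-attacking condition (iii). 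By the proposition that every solution to the modular $(n,d)$-queens problem is also a solution to the $(n,d)$-queens problem, it suffices to show that no two of these queens attack one another modulo $n$.

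Next I would reduce the modular non-attacking condition to a single arithmetic requirement, exactly as in the proof of Theorem \ref{Klarner}. Two distinct queens correspond to a nonzero parameter difference $\delta=(\delta_1,\dots,\delta_{d-1})$, and their last-coordinate difference satisfies $\delta_d \equiv \sum_{j<d} 2^{j}\delta_j \pmod n$. Such a pair attacks along a direction $\epsilon=(\epsilon_1,\dots,\epsilon_d)\in\{-1,0,1\}^d\setminus\{0\}$ iff $\delta\equiv m\,\epsilon \pmod n$ for some $m$. Directions with $(\epsilon_1,\dots,\epsilon_{d-1})=0$ force $\delta_1=\dots=\delta_{d-1}=0$ and hence coincide with the same queen, so they are vacuous. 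For the remaining directions, substituting $\delta_d$ shows that some pair attacks via $\epsilon$ iff there is an $m$ with $m\,t_\epsilon\equiv 0\pmod n$ and $m\,\epsilon\not\equiv 0 \pmod n$, where
\begin{equation}
t_\epsilon = \epsilon_d - \sum_{j=1}^{d-1} 2^{j}\epsilon_j .
\end{equation}
A short divisibility argument (identical in spirit to the $d=3$ case) shows this occurs precisely when $\gcd(t_\epsilon,n)>1$. Hence the construction is a solution iff $\gcd(t_\epsilon,n)=1$ for every direction $\epsilon$ with $(\epsilon_1,\dots,\epsilon_{d-1})\ne 0$.

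It remains to exploit the hypothesis $\gcd(n,(2^d-1)!)=1$, which is equivalent to saying that $n$ has no prime factor $\le 2^d-1$. The quantity $t_\epsilon$ is a signed combination of the weights $\{2^0,2^1,\dots,2^{d-1}\}$, the weight $2^0$ coming from $\epsilon_d$. Two facts about these weights close the argument. First, they are \emph{dissociated}: $\sum_{i=0}^{d-1}2^{i}\eta_i=0$ with $\eta_i\in\{-1,0,1\}$ forces all $\eta_i=0$ (compare the top power against the sum of the lower ones), so $t_\epsilon\ne 0$ whenever $(\epsilon_1,\dots,\epsilon_{d-1})\ne 0$. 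Second, $|t_\epsilon|\le 2^0+2^1+\dots+2^{d-1}=2^d-1$, so every prime dividing the nonzero integer $t_\epsilon$ is at most $2^d-1$ and is therefore coprime to $n$ by hypothesis. Thus $\gcd(t_\epsilon,n)=1$ for all relevant $\epsilon$, completing the proof; for $d=3$ the weights are $\{1,2,4\}$ and the bound $2^d-1=7$ recovers Klarner's condition $\gcd(n,210)=1$.

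I expect the main obstacle to be the choice of the coefficients rather than the bookkeeping: one must simultaneously guarantee that the defining linear form never vanishes on a nonzero $\{-1,0,1\}$-combination (so that no genuine attack slips through) while keeping every attainable value bounded by $2^d-1$ (so that the no-small-prime hypothesis can be invoked). The powers-of-two weights resolve both constraints at once, and spotting them — not the routine case analysis over the $\tfrac{1}{2}(3^d-1)$ attack directions — is the crux of the argument.
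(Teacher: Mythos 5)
Your proof is correct, but note that the paper does not actually prove this theorem: it is stated purely as a cited result of Van Rees, and the only construction proved in the text is the $d=3$ case (Theorem \ref{Klarner}) with coefficients $a=3$, $b=5$. What you have written is therefore a genuine, self-contained generalization of that argument rather than a reproduction of anything in the paper, and the two ingredients you isolate are exactly the right ones: the weights $1,2,\dots,2^{d-1}$ are dissociated, so $t_\epsilon\neq 0$ whenever $(\epsilon_1,\dots,\epsilon_{d-1})\neq 0$ (which is what rules out the degenerate case $\gcd(t_\epsilon,n)=n$), and they are small enough that $|t_\epsilon|\le 2^d-1$, so the hypothesis $\gcd(n,(2^d-1)!)=1$ --- equivalently, $n$ has no prime factor at most $2^d-1$ --- kills every possible common factor. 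The reduction of ``some pair attacks along $\epsilon$'' to ``$\gcd(t_\epsilon,n)>1$'' is also sound in both directions: a nonzero residue $m$ with $m\,t_\epsilon\equiv 0\pmod n$ exists iff $\gcd(t_\epsilon,n)>1$, and any such $m$ produces an actual attacking pair by taking parameter difference $m(\epsilon_1,\dots,\epsilon_{d-1})$. Two cosmetic remarks. First, for $d=3$ your weights give $(a,b)=(2,4)$ rather than Klarner's $(3,5)$; both satisfy the same criterion because the attainable nonzero values of $t_\epsilon$ have all their prime factors in $\{2,3,5,7\}$, so you do recover $\gcd(n,210)=1$. Second, your appeal to the modular-implies-standard proposition is really an appeal to the line-based toroidal attack relation (difference vector congruent to $m\epsilon$), which is what you in fact verify and which contains the standard attack relation, rather than to the stronger hyperplane-based condition used in the paper's formal statement of Problem 2; the logic is unaffected, but the phrasing could be tightened to say directly that a non-attacking configuration on the torus is non-attacking on the board.
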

\newpage
\subsection{Lower Bounds}
\begin{proposition}[trivial lower bound] \label{prop_trivial_upper_bd}
    For all $n,d$, $k \geq 1$
    \begin{align}
    |Q_{max}(n+k,d)| \geq |Q_{max}(n,d)| \nonumber \\
    |Q_{max}(n,d+k)| \geq |Q_{max}(n,d)|
    \end{align}
\end{proposition}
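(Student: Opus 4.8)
The plan is to establish both inequalities by exhibiting an explicit injection of any maximal partial solution on the smaller board into a valid partial configuration on the larger board, so that the image is still mutually non-attacking and has the same cardinality. Since $|Q_{max}(n,d)|$ is defined as the maximum cardinality of a mutually non-attacking set of queens fitting the board, it suffices to show that the optimal configuration on the smaller board embeds as a feasible (not necessarily optimal) configuration on the larger board; maximality on the larger board then gives the stated $\geq$.

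For the first inequality, fix a maximal partial solution $Q_{max}(n,d)$ on the $(n,d)$-board. First I would observe that every queen $q = (a_1,\dots,a_d)$ in this set satisfies $1 \leq a_i \leq n \leq n+k$, so the identical tuples describe queens that fit the $(n+k,d)$-board, i.e. $\norm{Q_{max}(n,d)} \leq n \leq n+k$. The key point is that the attacking relation between two queens depends only on their coordinate tuples (via Definition of queen attack: the existence of $m$ and a nonzero $\epsilon$ with $a_i = \epsilon_i m + b_i$ for all $i$), and this condition makes no reference to the board size $n$. Hence a pair that is mutually non-attacking on the $(n,d)$-board remains mutually non-attacking when reinterpreted on the $(n+k,d)$-board. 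Thus $Q_{max}(n,d)$ is a valid partial solution on the larger board of size $|Q_{max}(n,d)|$, and since $|Q_{max}(n+k,d)|$ is the maximum over all such configurations, $|Q_{max}(n+k,d)| \geq |Q_{max}(n,d)|$.

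For the second inequality, fix a maximal partial solution on the $(n,d)$-board and embed each queen $q = (a_1,\dots,a_d)$ into the $(n,d+k)$-board by appending $k$ fixed coordinates, say the map $q \mapsto (a_1,\dots,a_d,1,1,\dots,1)$. These all fit the board since each coordinate lies in $\{1,\dots,n\}$. The main thing to verify is that non-attacking is preserved: if two embedded queens attacked each other, there would be a nonzero $\epsilon \in \{-1,0,1\}^{d+k}$ and an $m$ realizing the attack across all $d+k$ coordinates. Since the last $k$ coordinates are identical (all equal to $1$), the relation $a_i = \epsilon_i m + b_i$ in those coordinates forces $\epsilon_i m = 0$ there, so restricting $\epsilon$ to its first $d$ entries yields a witness (which is still nonzero, because the appended entries contribute nothing) to an attack between the original queens $q$ and $q'$ on the $(n,d)$-board, contradicting that the original set was non-attacking. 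Hence the embedded set is mutually non-attacking and of the same cardinality, giving $|Q_{max}(n,d+k)| \geq |Q_{max}(n,d)|$.

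The steps are largely routine once the board-independence of the attacking relation is isolated, which is the conceptual crux for the first inequality. The one genuinely delicate point — the main obstacle — is the second inequality's edge case where the witnessing $\epsilon$ might have its only nonzero entries among the appended coordinates; I must confirm that such an $\epsilon$ cannot certify an attack, because on those coordinates $a_i = b_i = 1$ forces $\epsilon_i m = 0$, so either $m = 0$ (making $\epsilon$'s action trivial everywhere and hence no genuine attack, as distinct queens require a nonzero displacement) or the nonzero entries of $\epsilon$ must occur among the first $d$ coordinates, returning us to a genuine attack on the smaller board. Handling the $m=0$ case carefully — noting that two distinct queens cannot attack via $m=0$ since that would force equality of all coordinates — closes the argument.
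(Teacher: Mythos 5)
Your proof is correct. The paper states this proposition as a ``trivial lower bound'' and offers no proof at all, so there is no argument to compare against; your embedding argument (identity inclusion for the larger board size, appending constant coordinates for the larger dimension) is exactly the justification the paper implicitly relies on, and you correctly dispose of the only delicate point, namely that a witnessing $\epsilon$ supported on the appended coordinates would force $m=0$ and hence equality of the two queens, which the attack definition excludes for distinct pieces.
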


\subsubsection{Subscube Heuristic}
Recall the subcube heuristic from theorem \ref{thm_subcubeheur_3d}. For $d=4$, it appears that we are looking for a $k^4$-subhypercube containing as many queens as possible, contrary to $d=3$, for which we wanted to minimize the number of queens in the $k^3$-cube. This is also the case for the trivial case $d=2$ as one can verify:\\
\begin{theorem} \label{thm_subcubeheur_2d}
    For any given integer $m$, $n>m$, $n = m+k$, we can construct a partial solution of $n - 2k + p$ queens on the $(m,2)$-board under the following condition: There exists a regular solution to the $(n,2)$-queens problem that contains a subset of size $k^2$ with $p$ queens. 
\end{theorem}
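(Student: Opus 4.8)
The plan is to mirror the construction proving Theorem \ref{thm_subcubeheur_3d}, specialised to two dimensions, where the inclusion–exclusion collapses from three sets to two. I would start from a regular solution $Q$ to the $(n,2)$-queens problem, which by definition carries exactly $n$ queens with precisely one in each row and each column, and then carve out an $(m,2)$-board by deleting outer layers while tracking the queens lost. Using the shift property of regular solutions (exactly as invoked in Theorem \ref{thm_subcubeheur_3d}), I would assume without loss of generality that the given $k \times k$ subset carrying $p$ queens occupies the outer $k$ rows and outer $k$ columns, i.e. a corner of the board.

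Deleting these $k$ columns (call the removed queens $L_x$) and these $k$ rows (call them $L_y$), the fact that every row and column of a full solution holds exactly one queen gives $|L_x| = |L_y| = k$, while a queen lies in $L_x \cap L_y$ precisely when it lies in the corner block, so $|L_x \cap L_y| = p$. Inclusion–exclusion then yields
\begin{align*}
    |L_x \cup L_y| = |L_x| + |L_y| - |L_x \cap L_y| = 2k - p,
\end{align*}
so the surviving queens, which populate the central $(n-k)\times(n-k) = m \times m$ block, number $n - (2k - p) = n - 2k + p$. As a subset of a non-attacking configuration this survivor set is non-attacking, and translating its coordinates into $\{1,\dots,m\}$ preserves every row, column, and diagonal relation, so it is a valid partial solution on the $(m,2)$-board of the claimed size. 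Note that, opposite to the $d=3$ case, here we want $p$ as large as possible, matching the remark preceding the theorem.

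The main obstacle is justifying the ``without loss of generality'' reduction. The survivor set stays non-attacking only because we delete contiguous outer layers and then translate: removing an interior row or column and re-indexing can fuse two previously non-attacking queens onto a common diagonal (for instance, queens differing by $(2,1)$ become diagonal neighbours once the row between them is deleted). I would therefore lean on the periodicity of regular solutions to move the chosen $k \times k$ block into a corner before stripping outer layers, exactly as in the three-dimensional argument. As a consistency check I would note that $p \le k$ forces $n - 2k + p \le m$, in agreement with the fact that a genuine full $(m,2)$-solution already exists for $m \ge 4$.
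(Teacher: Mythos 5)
Your proposal matches the paper's proof essentially verbatim: both delete the $k$ outer rows and columns from a (suitably shifted) regular solution and count the removed queens as $|L_x|+|L_y|-|L_x\cap L_y| = 2k-p$ (the paper phrases this as a partition into $I_{xy}$, $I_x$, $I_y$ rather than inclusion--exclusion, but the computation is identical). Your extra discussion of why the block may be assumed to sit in a corner, and why only contiguous outer layers may be stripped, is a more careful treatment of the same ``shift wlog'' step the paper also relies on.
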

\begin{proof}
Analog to theorem \ref{thm_subcubeheur_3d}. 
We obtain $Q_s(m,2)$, a partial solution for the $(m,2)$-board by removing $k$ layers in both dimensions of the solution. The $k$ layers in both dimension have to be outer layers of the cube or adjacent layers, as we can shift the solution wlog. Let us call the set of queens in the removed $x$-layers $L_x$ and the set of queens in the removed $y$-layers $L_y$.
\begin{align}
    |L_x| = |L_y| = k 
\end{align}
\begin{align}
    |Q_s(m,2)| = |Q_{max}(n,2)| - |L_x \cup L_y |
\end{align}
\begin{align}
    I_{xy} := (L_x \cap L_y)  \nonumber\\
    I_{x} := I_{xy} \setminus L_y \nonumber\\
    I_{y} := I_{xy} \setminus L_x 
\end{align}
\begin{align}
    |Q_s(m,2)| &= |Q_{max}(n,2)| - |L_x \cup L_y | \nonumber\\
    &= n - \big( |I_{xy}| + |I_{x}| + |I_{y}| \big)  \nonumber\\
    &= n - \big( p + (k-p) + (k-p) \big) \nonumber\\
    &= n - 2 \cdot k + p
\end{align}
\end{proof}

This result is apparent without proof from just looking at the $(n,2)$-board, and indeed the $p$ term is positive, meaning that we want to maximize the amount of queens in the subset that is the intersection of all cut layers. We believe that this is the case for all even $d$, while for odd $d$ we want that intersection to contain as few queens as possible. A generalization of theorem \ref{thm_subcubeheur_3d} for all $d$ remains to be shown. For $d=4$ we get\\
\begin{theorem} \label{thm_subcubeheur_4d}
    For any given integer $m$, $n>m$, $n = m+k$, we can construct a partial solution of $n^3 - 4kn^2 + 6k^2n + 8k^2 -12k^3 + p $ queens on the $(m,4)$-board under the following condition: There exists a regular solution to the $(n,4)$-queens problem that contains a subset of size $k^4$ with $p$ queens. 
\end{theorem}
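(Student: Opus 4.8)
The plan is to mirror the inclusion-exclusion argument of Theorem \ref{thm_subcubeheur_3d}, now carried out over four layer-sets instead of three. First I would fix a regular solution to the $(n,4)$-queens problem, which by hypothesis exists, places $n^3$ mutually non-attacking queens, and contains a $k\times k\times k\times k$ subhypercube holding exactly $p$ of them. After shifting the solution so that this subhypercube occupies a corner, I would build the candidate partial solution $Q_s(m,4)$ by deleting, in each of the four coordinate directions, the $k$ outermost layers meeting the subhypercube. Writing $L_1,L_2,L_3,L_4$ for the sets of queens lying in the deleted layers of the respective dimensions, the construction immediately gives
\begin{align*}
    |Q_s(m,4)| = n^3 - |L_1 \cup L_2 \cup L_3 \cup L_4|,
\end{align*}
so the whole task reduces to evaluating the right-hand union.

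The key structural input is that a full $(n,4)$ non-attacking configuration is a four-dimensional analogue of a Latin square: since no two of the $n^3$ queens may agree in three of their four coordinates, the projection onto any triple of coordinates is injective, hence a bijection onto $\{1,\dots,n\}^3$. I would use this, together with Proposition \ref{cor_superimpo_3d} (each deleted layer is an $(n,3)$-board carrying $n^2$ queens), to read off every intersection cardinality. Taking the deleted layers to be the first $k$ indices in each direction, a queen lies in $L_i$ iff its $i$-th coordinate is at most $k$; projecting onto a suitable coordinate triple then yields
\begin{align*}
    |L_i| = k n^2, \qquad |L_i \cap L_j| = k^2 n, \qquad |L_i \cap L_j \cap L_\ell| = k^3,
\end{align*}
while $|L_1 \cap L_2 \cap L_3 \cap L_4| = p$ is exactly the hypothesis on the subhypercube.

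Finally I would substitute these into the four-set inclusion-exclusion expansion
\begin{align*}
    |L_1 \cup L_2 \cup L_3 \cup L_4| = \sum_i |L_i| - \sum_{i<j} |L_i \cap L_j| + \sum_{i<j<\ell} |L_i \cap L_j \cap L_\ell| - |L_1 \cap L_2 \cap L_3 \cap L_4|,
\end{align*}
noting that there are $4$, $6$, and $4$ terms of the first three types, and collect everything into a closed polynomial in $n,k,p$. The main obstacle, exactly as in the three-dimensional case, is pinning the intersection counts down rigorously: the hypothesis fixes only the occupancy $p$ of the full $k^4$ corner, so I must argue that each lower-order intersection is forced by the bijectivity of the coordinate-triple projections — the $4$D analogue of the ``one queen per row/column'' reasoning used for $I_{xy},I_{xz},I_{yz}$ in Theorem \ref{thm_subcubeheur_3d}. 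Once the bijection property is in hand the remaining algebra is routine, though I would carefully re-collect the coefficients and cross-check the resulting expression against the lower-dimensional specializations (Theorems \ref{thm_subcubeheur_2d} and \ref{thm_subcubeheur_3d}) and against the value at $k=1$, since that is the regime in which the bound is actually applied.
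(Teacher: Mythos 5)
Your approach is exactly the one the paper intends: its entire proof of Theorem \ref{thm_subcubeheur_4d} is the single sentence ``Analog to theorem \ref{thm_subcubeheur_3d}, by construction,'' so the four-set inclusion--exclusion over the deleted layer sets is precisely what is meant, and your justification of the intersection cardinalities via bijectivity of the coordinate-triple projections (no two queens of a full configuration agree in three of their four coordinates, so forgetting one coordinate maps the $n^3$ queens bijectively onto $\{1,\dots,n\}^3$) is cleaner and more rigorous than the row/column counting used for $I_{xy},I_{xz},I_{yz}$ in the paper's three-dimensional proof. There is, however, one substantive point you will hit when you ``collect everything into a closed polynomial'': your (correct) counts $|L_i|=kn^2$, $|L_i\cap L_j|=k^2n$, $|L_i\cap L_j\cap L_\ell|=k^3$ and $|L_1\cap L_2\cap L_3\cap L_4|=p$ yield
\begin{align*}
|Q_s(m,4)| \;=\; n^3 - 4kn^2 + 6k^2n - 4k^3 + p,
\end{align*}
whereas the theorem asserts $n^3 - 4kn^2 + 6k^2n + 8k^2 - 12k^3 + p$. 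The two expressions coincide only at $k=1$ (where $8k^2-12k^3=-4=-4k^3$); for $k\geq 2$ they differ, the $8k^2$ term breaks the degree-three homogeneity satisfied by every other term, and the binomial pattern $\binom{4}{3}k^3=4k^3$ strongly suggests the stated coefficients are an error in the paper rather than in your derivation. So your argument is sound but, carried to completion, proves a corrected variant of the statement; you should either flag that discrepancy explicitly or restrict your verification to the $k=1$ regime in which the two formulas agree and the bound is actually used.
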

\begin{proof}
    Analog to theorem \ref{thm_subcubeheur_3d}, by construction.\\
\end{proof}

\begin{conjecture} \label{conj_subcube_thm}
For all $d$ and given integer $m$, $n>m$, $n = m+k$, we can construct a partial solution of 
\begin{align}
    n^{d-1}-dkn^{d-2}+\frac{(d-1)(d-2)}{2}k^2n^{d-3}+ \dots + (-1)^{d-1} a \cdot p
\end{align}
queens on the $(m,d)$-board under the following condition: There exists a regular solution to the $(n,d)$-queens problem that contains a subset of size $k^d$ with $p$ queens.\\
\end{conjecture}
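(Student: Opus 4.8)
The plan is to carry out, in full generality, the inclusion--exclusion computation that proves Theorem \ref{thm_subcubeheur_3d}. Fix a full solution $Q$ to the $(n,d)$-queens problem (Theorem \ref{thm_vanrees_general_d} supplies one for the admissible $n$), and after shifting assume the distinguished $k^d$ subcube occupies the corner $\{1,\dots,k\}^d$. For each dimension $i$ let $L_i$ be the set of queens whose $i$-th coordinate lies in $\{1,\dots,k\}$, i.e. the queens in the $k$ removed $i$-layers. A surviving queen lies in none of the $L_i$ exactly when all its coordinates exceed $k$, so deleting $L_1\cup\cdots\cup L_d$ and relabelling the coordinates $\{k+1,\dots,n\}$ as $\{1,\dots,m\}$ leaves a set $Q_s(m,d)$ on the $(m,d)$-board with $\norm{Q_s(m,d)}\le m$ that is still mutually non-attacking (deleting queens cannot create an attack, and attacks depend only on relative position). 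Hence $|Q_s(m,d)| = n^{d-1} - |L_1\cup\cdots\cup L_d|$, and the statement reduces to evaluating this union.

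The key step is a counting lemma for the intersections, which rests on one structural fact: any $(n,d)$-queens solution, having $n^{d-1}$ mutually non-attacking queens (hence in particular non-attacking as rooks, cf.\ Proposition \ref{prop_rooks_contained_in_queens}), is a Latin-hypercube configuration in which every axis-parallel line carries exactly one queen. This follows by projection: each of the $n^{d-1}$ fibres of any coordinate projection receives at most one queen, and there are exactly $n^{d-1}$ queens. Using this I would show that for every index set $S\subseteq\{1,\dots,d\}$ with $|S|=j<d$, the intersection $\bigcap_{i\in S}L_i$ --- the slab of queens whose $S$-coordinates all lie in $\{1,\dots,k\}$ --- contains exactly $k^{\,j}\,n^{\,d-1-j}$ queens. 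The argument is to choose a direction $\ell\notin S$ (possible precisely because $j<d$): the axis-parallel lines in direction $\ell$ contained in the slab are indexed by the $k^{\,j}$ choices of the $S$-coordinates and the $n^{\,d-1-j}$ choices of the remaining $d-1-j$ fixed coordinates; each such line lies entirely in the slab, carries exactly one queen, and these lines partition the slab.

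With the intersection sizes in hand the union follows from inclusion--exclusion. By symmetry all $\binom{d}{j}$ intersections of size $j$ share the common cardinality $c_j=k^{\,j}n^{\,d-1-j}$ for $j<d$, while the single $d$-fold intersection $\bigcap_i L_i$ is exactly the corner subcube $\{1,\dots,k\}^d$ and by hypothesis contains $c_d=p$ queens. Therefore
\begin{align*}
|Q_s(m,d)| \;=\; n^{d-1}-\sum_{j=1}^{d}(-1)^{j+1}\binom{d}{j}c_j \;=\; \sum_{j=0}^{d-1}(-1)^{j}\binom{d}{j}k^{\,j}n^{\,d-1-j}\;+\;(-1)^{d}p .
\end{align*}
This is the asserted closed form; its leading terms are $n^{d-1}-dk\,n^{d-2}+\binom{d}{2}k^{2}n^{d-3}-\cdots$ (so the general layer coefficient is the binomial coefficient $\binom{d}{j}$), and it specialises correctly to the $d=2$ and $d=3$ results, with the coefficient on $p$ equal to $(-1)^{d}$.

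I expect the only genuine obstacle to be the intersection-counting lemma, and in particular the clean dichotomy between the \emph{forced} intersections ($j<d$, where a free direction pins the count to $k^{\,j}n^{\,d-1-j}$) and the single \emph{unforced} intersection ($j=d$, where no free direction remains). This dichotomy is exactly what makes $p$ a genuine input of the construction rather than a determined quantity; pinning down this boundary case, together with the sign bookkeeping in inclusion--exclusion, is where care is needed, while the rest is routine binomial algebra.
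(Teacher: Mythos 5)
The paper does not actually prove this statement: it is recorded as a conjecture, with the explicit remark that a generalization of Theorem \ref{thm_subcubeheur_3d} to all $d$ remains to be shown, and only $d=2,3$ are worked out in detail (the $d=4$ case is asserted with the one-line justification that it is analogous). Your argument is the natural extension of the paper's $d=3$ proof --- shift the distinguished subcube to a corner, delete the $k$ extreme layers in each dimension, and apply inclusion--exclusion --- but you isolate the one ingredient the paper uses only implicitly: the counting lemma that for $j<d$ every $j$-fold intersection $\bigcap_{i\in S}L_i$ contains exactly $k^{j}n^{d-1-j}$ queens, which you correctly reduce to the Latin-hypercube property (every axis-parallel line of a full solution carries exactly one queen) via a free direction $\ell\notin S$, and which breaks down precisely and only for the single $d$-fold intersection, where $p$ enters as a genuine parameter. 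I see no gap: the shift is legitimate because regular solutions are modular, deletion plus translation preserves non-attackingness, and the bookkeeping is routine. Your closed form $\sum_{j=0}^{d-1}(-1)^{j}\binom{d}{j}k^{j}n^{d-1-j}+(-1)^{d}p$ reproduces Theorems \ref{thm_subcubeheur_2d} and \ref{thm_subcubeheur_3d} exactly, so you have in effect settled the conjecture. Be aware, though, that your formula \emph{corrects} rather than matches the statement as written: the conjectured coefficient $\frac{(d-1)(d-2)}{2}$ of $k^{2}n^{d-3}$ must be $\binom{d}{2}=\frac{d(d-1)}{2}$ (already forced by the $3k^{2}$ term in the proven $d=3$ case); the sign on $p$ is $(-1)^{d}$ rather than $(-1)^{d-1}$, which is what actually agrees with the proven small cases and with the paper's stated intuition that one should maximize $p$ for even $d$ and minimize it for odd $d$; and your $d=4$ specialization $n^{3}-4kn^{2}+6k^{2}n-4k^{3}+p$ disagrees with the $+8k^{2}-12k^{3}$ of Theorem \ref{thm_subcubeheur_4d} for every $k\geq 2$. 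Since your intersection counts are forced by the Latin-hypercube property, that discrepancy points to an error in the paper's unproved $d=4$ formula, not in your derivation --- but you should say so explicitly rather than only checking $d=2,3$.
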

The last term of integer $a$ is negative for even $d$ and positve for odd $d$.\\

\begin{conjecture} \label{conj_subcube_heur_gend}
Consider the lower bound for $R(n,d)$ (see corollary \ref{cor_alpha_limit_3d}) 
\begin{align*}
    \alpha_{n_0,d} \leq R(n,d) = \frac{|Q_{max}(n,d)|}{n^{d-1}} \text{ , } \forall n \geq n_0
\end{align*}
Then for all $d$
\begin{align*}
    \lim_{n_0 \to \infty} \alpha_{n_0,d} = 1
\end{align*}
\end{conjecture}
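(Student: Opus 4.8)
The plan is to mimic the argument behind Corollary \ref{cor_alpha_limit_3d}, replacing the modulus $210$ by $(2^d-1)!$ and replacing the explicit $d=3$ subcube count by a crude union bound that works in every dimension. First I would unwind the definition of $\alpha_{n_0,d}$: since $|Q_{max}(n,d)| \leq n^{d-1}$ always, we have $R(n,d) \leq 1$, and $\alpha_{n_0,d}$ is by definition the largest constant lower-bounding $R(n,d)$ for all $n \geq n_0$, i.e. $\alpha_{n_0,d} = \inf_{n \geq n_0} R(n,d)$. Hence $\lim_{n_0 \to \infty} \alpha_{n_0,d} = \liminf_{n \to \infty} R(n,d)$, and because $R \leq 1$ it suffices to prove the single asymptotic statement
\[
R(m,d) \to 1 \quad\text{as } m\to\infty,
\]
equivalently $|Q_{max}(m,d)| \geq m^{d-1}(1-o(1))$.

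Next comes the construction. Given $m$, let $n$ be the smallest integer with $n \geq m$ and $\gcd(n,(2^d-1)!)=1$, and set $k := n-m \geq 0$. By Theorem \ref{thm_vanrees_general_d} the $(n,d)$-queens problem has a solution $Q$, so $|Q| = n^{d-1}$ and, by Proposition \ref{cor_superimpo_3d}, every layer in every dimension carries exactly $n^{d-2}$ queens. Since $Q$ is regular I may shift it freely and then delete the $k$ outermost layers in each of the $d$ coordinate directions; the surviving queens form a partial solution on an $(m,d)$-board. The queens removed in a single direction number $k\,n^{d-2}$, so by the union bound at most $d\,k\,n^{d-2}$ queens are deleted in total, giving
\[
|Q_{max}(m,d)| \geq n^{d-1} - d\,k\,n^{d-2}.
\]
This uses only a crude overcount of the removed queens, so it does not rely on the exact (conjectural) subcube formula of Conjecture \ref{conj_subcube_thm}; the precise inclusion--exclusion of Theorem \ref{thm_subcubeheur_3d} would merely sharpen the bound.

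The number-theoretic input is what supplies uniformity in $m$. Coprimality to $(2^d-1)!$ is the same as coprimality to the product $P_d$ of all primes at most $2^d-1$, and the set of integers coprime to $P_d$ is periodic with period $P_d$; therefore the gaps between consecutive such integers are bounded by a constant depending only on $d$ (this maximal gap is exactly Jacobsthal's function $g(P_d)$ — for $d=3$, $P_3 = 210$ and $g(210)=10$, which is the source of the $n+10$ in Corollary \ref{cor_limit_alpha}). Thus $k = n-m \leq g(P_d) =: g(d)$, a constant independent of $m$. Substituting $n=m+k$ and dividing by $m^{d-1}$, and using $(m+k)^{d-1} \geq m^{d-1}$,
\[
R(m,d) = \frac{|Q_{max}(m,d)|}{m^{d-1}} \ge \frac{(m+k)^{d-1} - d\,k\,(m+k)^{d-2}}{m^{d-1}} \ge 1 - \frac{d\,k\,(m+k)^{d-2}}{m^{d-1}}.
\]
With $k$ bounded by $g(d)$ the subtracted term is $O(1/m)$, so $R(m,d) \to 1$; combined with $R(m,d) \leq 1$ this yields $\liminf_{m\to\infty} R(m,d) = 1$, which is the claim.

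I expect the crux to be the number-theoretic step rather than the geometry. Van Rees's theorem by itself only guarantees \emph{infinitely many} coprime $n$, whereas the conclusion needs the \emph{gaps} between coprime $n$ to remain bounded as $m \to \infty$; all the uniformity in $m$ flows from the finiteness of Jacobsthal's function for the fixed modulus $(2^d-1)!$. Everything else is a bounded-error peeling of outer layers. A secondary and more cosmetic issue is phrasing the general-$d$ layer removal cleanly, but since only the union-bound estimate $d\,k\,n^{d-2}$ is required, Conjecture \ref{conj_subcube_thm} can be bypassed altogether.
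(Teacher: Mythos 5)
Your argument is sound, and it is worth stating plainly that the paper does not prove this statement at all: it is recorded as a conjecture, accompanied only by the remark that it would follow from the (itself unproven) general-$d$ subcube formula of Conjecture \ref{conj_subcube_thm}. Your route is therefore genuinely different, and stronger in one respect: by replacing the exact inclusion--exclusion count of removed queens with the union bound $d\,k\,n^{d-2}$, you decouple the limit statement from Conjecture \ref{conj_subcube_thm} entirely and obtain an unconditional proof. The three inputs you rely on are all available in the paper without further hypotheses --- Theorem \ref{thm_vanrees_general_d} for the existence of a full solution at some coprime $n$, the fact that every layer of a full solution carries exactly $n^{d-2}$ queens (Proposition \ref{cor_superimpo_3d}, or directly from the trivial per-layer upper bound plus counting $n^{d-1}$ queens over $n$ layers), and the bounded gap $k \le g(d)$ between consecutive integers coprime to $(2^d-1)!$, which is the same periodicity fact that supplies the ``$+10$'' in Corollary \ref{cor_limit_alpha}. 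You correctly identify this last point as the crux: Van Rees alone gives infinitely many good $n$, and it is the uniform bound on the gaps that converts this into a statement about every large $m$. The trade-off relative to the paper's intended route is precision: the subcube machinery of Theorem \ref{thm_subcubeheur_3d} and Conjecture \ref{conj_subcube_thm} is aimed at explicit, near-optimal lower bounds for individual $(m,d)$-boards (the tables of partial solutions), and your union bound deliberately gives those up; for the $\liminf$ statement the $O(1/m)$ error term is all that is needed. Two cosmetic points: the appeal to regularity and shifting is unnecessary for the union bound, since restricting any full solution to the subboard $\{1,\dots,m\}^d$ already yields a valid partial solution with the claimed count; and the Jacobsthal refinement is likewise optional, as the crude bound $k \le (2^d-1)!$ is already a constant in $m$.
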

Conjecture \ref{conj_subcube_heur_gend} would follow from the subhypercube theorem, i.e. conjecture \ref{conj_subcube_thm}.

\newpage
\subsection{Upper Bounds} \label{subsec_upperbounds}
\begin{proposition}[trivial upper bound] \label{prop_trivial_upper_bd}
    For all $n,d$, $k \geq 1$
    \begin{align}
    |Q_{max}(n,d)|   &\leq n^{d-1} \nonumber \\
    |Q_{max}(n,d+k)| &\leq |Q_{max}(n,d)| \cdot n^k
    \end{align}
\end{proposition}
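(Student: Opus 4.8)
The plan is to treat the two inequalities separately, deriving both from the single structural fact that queens attack along every axis-parallel line — these are exactly the attack vectors $\epsilon$ with a single nonzero entry, i.e.\ the rook sublines contained in the queen's moves.

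For the first bound $|Q_{max}(n,d)| \le n^{d-1}$, I would use the projection $\pi$ that forgets the last coordinate, $\pi(a_1,\dots,a_d) = (a_1,\dots,a_{d-1})$. The first step is to observe that $\pi$ is injective on any partial solution $Q$: if two distinct queens $q_1,q_2 \in Q$ satisfied $\pi(q_1)=\pi(q_2)$, they would agree in the first $d-1$ coordinates and differ only in the $d$-th, so taking $\epsilon=(0,\dots,0,1)$ and $m = (q_1)_d-(q_2)_d$ exhibits an attack, contradicting the mutual non-attacking property. Since the image of $\pi$ lies in $\{1,\dots,n\}^{d-1}$, injectivity gives $|Q| \le n^{d-1}$, and taking $Q = Q_{max}(n,d)$ yields the claim.

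For the second bound, the plan is a slicing argument. I would partition the $(n,d+k)$-board into the $n^k$ parallel $(n,d)$-slabs obtained by fixing the last $k$ coordinates to each value $c=(c_1,\dots,c_k)\in\{1,\dots,n\}^k$. Given a partial solution $Q$ on the $(n,d+k)$-board, let $Q_c$ collect the queens of $Q$ lying in the slab indexed by $c$, and let $\bar Q_c$ be their projections onto the first $d$ coordinates. The crux is to show that each $\bar Q_c$ is a partial solution on the $(n,d)$-board, whence $|Q_c|=|\bar Q_c| \le |Q_{max}(n,d)|$; summing over the $n^k$ slabs gives $|Q| \le |Q_{max}(n,d)|\cdot n^k$, and specializing $Q = Q_{max}(n,d+k)$ finishes.

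The one genuine lemma, and the step I expect to need the most care, is the attack-transfer claim. If two projected queens in $\bar Q_c$ attacked each other on the $(n,d)$-board via some nonzero $\epsilon'\in\{-1,0,1\}^d$ and some $m$, then padding with zeros to $\epsilon=(\epsilon',0,\dots,0)$ produces a valid attack between the original queens on the $(n,d+k)$-board: the first $d$ coordinates satisfy the attack equation by hypothesis, the last $k$ coordinates satisfy it trivially because the two queens share those coordinates (so $c_j = 0\cdot m + c_j$), and $\epsilon$ remains nonzero since $\epsilon'$ is. Contrapositively, the non-attacking property of $Q$ forces each $\bar Q_c$ to be non-attacking, and distinctness of the projections within a slab is immediate, since two queens agreeing on all $d+k$ coordinates would coincide. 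I would also note that the first inequality is exactly the special case $d=1$ of the second, using $|Q_{max}(n,1)|=1$ (all $n$ squares of a $(n,1)$-board lie on one line), so the slicing lemma alone in fact suffices if one prefers a single unified argument.
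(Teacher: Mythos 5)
Your proof is correct. The paper states this proposition without any proof (it is labelled a ``trivial upper bound''), so there is no argument in the source to compare against; your write-up supplies exactly the reasoning the paper leaves implicit. Both halves are sound: the injectivity of the coordinate-forgetting projection on a non-attacking set (via the axis vector $\epsilon=(0,\dots,0,1)$) gives the first bound, and the slab decomposition with the zero-padding attack-transfer lemma gives the second; this is the same layer/decomposition style of argument the paper does spell out for its analogous results (the proposition on superimposable solutions in each layer and Theorem \ref{thm_upperbound_general}, where the board is cut into disjoint sub-boards). Your closing observation that the first inequality is the $d=1$ case of the second, using $|Q_{max}(n,1)|=1$, is also consistent with the paper's own table of known values.
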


\subsubsection{Subsolution Inequalities} \label{subsubsec_subsolution_ineq_gend}
For any subset of the $(n,d)$-board, we may apply our knowledge of maximal partial solutions for $(m,d)$-boards, $m < n$. Let $S^{n,d}_m$ denote the set of all subsets of the $(n,d)$-board that correspond to $(m,d)$-boards, i.e., $m^d$ hypercubes. We obtain the following additional valid inequalities:
\begin{align}
     \sum_{s\in S} x_s \leq  |Q_{max}(m,d)| \;\;\;\;\; \forall S \in S^{n,d}_m
\end{align}
There are $|S^{n,d}_m| = (n-m+1)^d$ such subsets for each $m$. 
Implementation of this approach needs to be done iteratively, as it requires knowledge (or tight upper bounds) on maximal partial solutions for $m < n$.

\begin{figure}[H]
    \centering
    \includegraphics[width=0.5\linewidth]{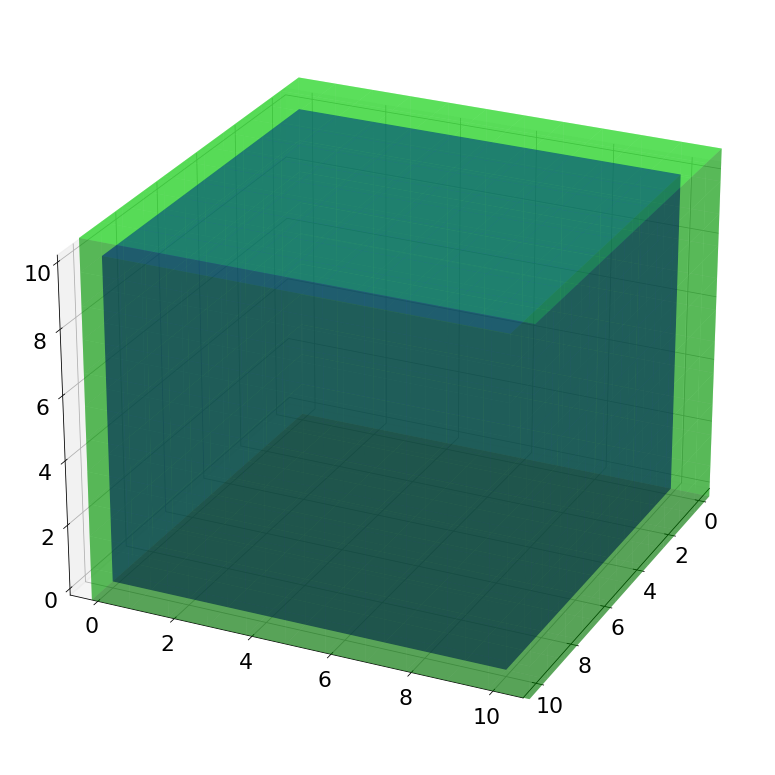}
    \caption{One of the 8 $S^{11,3}_{10}$ subsolutions}
    \label{fig:n21_d3_405queens}
\end{figure}

These inequalities are impactful for smaller $n$. They may be used to derive upper bounds for smaller $n$:\\
\begin{theorem} \label{thm_upperbound_general}
For $n,m$ with $n=k \cdot m$
    \begin{align}
        |Q_{max}(n,d)| \leq |Q_{max}(m,d)| \cdot k^d
    \end{align}
\end{theorem}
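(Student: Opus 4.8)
The plan is to exploit the hypothesis $n = km$ to tile the $(n,d)$-board by $k^d$ pairwise-disjoint copies of the $(m,d)$-board and then bound the number of queens block-by-block. Concretely, I would partition each coordinate axis $\{1,2,\ldots,n\}$ into the $k$ consecutive blocks $\{1,\ldots,m\}, \{m+1,\ldots,2m\}, \ldots, \{(k-1)m+1,\ldots,km\}$. Taking a product of one block in each of the $d$ dimensions yields $k^d$ sub-boards $C_1,\ldots,C_{k^d}$, each consisting of exactly $m^d$ squares and each a translate of the standard $(m,d)$-board; together they partition the $n^d$ squares of the full board.

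The core step is to show that for a maximal partial solution $Q = Q_{max}(n,d)$, the restriction $Q \cap C_j$ is a valid partial solution on the $(m,d)$-board, and therefore $|Q \cap C_j| \le |Q_{max}(m,d)|$. First I would translate $C_j$ back to coordinates $\{1,\ldots,m\}^d$. Because the queen attack relation — two queens attack when their coordinate tuples satisfy $a_i = \epsilon_i t + b_i$ for some integer $t$ and some nonzero $\epsilon \in \{-1,0,1\}^d$ — depends only on the differences $a_i - b_i$, it is invariant under this translation; two queens lying inside $C_j$ attack each other on the big board if and only if they attack each other on the translated $(m,d)$-board. Since $Q$ is mutually non-attacking on the $(n,d)$-board, each restriction is mutually non-attacking on its sub-board, which gives the per-block bound.

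Finally I would sum over the blocks. As the $C_j$ partition the board, every queen of $Q$ lies in exactly one $C_j$, so
\begin{align*}
    |Q_{max}(n,d)| = \sum_{j=1}^{k^d} |Q \cap C_j| \le \sum_{j=1}^{k^d} |Q_{max}(m,d)| = k^d \cdot |Q_{max}(m,d)|,
\end{align*}
which is the claim. This can also be read as the disjoint special case of the subsolution inequalities from Section \ref{subsubsec_subsolution_ineq_gend}: instead of the full overlapping family of $(n-m+1)^d$ sub-boards, one only needs the $k^d$ tiles of an exact partition, and disjointness converts the family of individual upper bounds into a single additive bound.

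I expect the only genuine subtlety to be the middle step — making precise that the attack relation restricted to a sub-board coincides with the attack relation of the isolated $(m,d)$-board. The decisive facts are that the board is non-modular (so there is no wraparound between blocks that could create or destroy attacks) and that attacks are determined purely by coordinate differences (so translating a block to the origin changes nothing). Everything else is bookkeeping: confirming that the blocks are genuinely disjoint and exhaustive, which is immediate from $n = km$.
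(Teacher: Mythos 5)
Your proof is correct and follows exactly the same route as the paper, whose entire argument is the one-line observation that the $(n,d)$-board is a disjoint union of $k^d$ many $(m,d)$-boards. You have merely made explicit the details the paper leaves implicit, namely the translation invariance of the attack relation and the summation over the disjoint tiles.
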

\begin{proof}
    The $(n,d)$-board can be considered as disjoint union of $k^d$ many $(m,d)$-boards. 
\end{proof}
\medskip

\begin{corollary}
    $|Q_{max}(2k,d)| \leq  k^{d} $.
\end{corollary}
\begin{proof}
    $|Q_{max}(2k,d)| \leq |Q_{max}(2,d)| \cdot k^d = k^{d} $  by corollary \ref{cor_n2_qmax}.
\end{proof}

\newpage
Regarding the observation from the last corollary, see also the special case for $k=2$ as discussed by \cite{langlois2022complexity}
\begin{conjecture}[Langlois-R{\'e}millard, M{\"u}{\ss}ig, R{\'o}ldan \cite{langlois2022complexity}]
    $|Q_{max}(4,d)| = 2^d$ for $d \geq 4$.
\end{conjecture}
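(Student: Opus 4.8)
The statement splits into a matching upper and lower bound, and the upper bound is already in hand: applying the immediately preceding corollary with $k=2$ (so that $4 = 2\cdot 2$) gives $|Q_{max}(4,d)| \le 2^d$, which rests on $|Q_{max}(2,d)| = 1$ from Corollary \ref{cor_n2_qmax}. Hence the entire content lies in the lower bound: for every $d \ge 4$ one must exhibit $2^d$ mutually non-attacking queens on the $(4,d)$-board. My plan rests on a single structural observation. Reading the definition of attack through the difference vector $\delta = q_1 - q_2$ of two queens in $\{0,1,2,3\}^d$, the two attack each other iff all nonzero entries of $\delta$ share a common absolute value (equivalently $\delta_i \in \{-m,0,m\}$ for one integer $m$). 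Therefore a pair is \emph{non-attacking} precisely when $\delta$ carries at least two distinct absolute values among its nonzero entries; in particular, a difference with a single nonzero coordinate is always an attack (the rook case).

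The engine of the construction is a product lemma. Given a non-attacking set $A \subseteq \{0,1,2,3\}^{d_1}$ and a non-attacking set $B \subseteq \{0,1,2,3\}^{d_2}$, I claim the Cartesian product $A \times B \subseteq \{0,1,2,3\}^{d_1+d_2}$ is again non-attacking, of cardinality $|A|\,|B|$. To prove it, take two distinct points $(a,b),(a',b')$ and split by which factor differs. If $a \ne a'$, the first-$d_1$ block of the difference is a non-attacking difference inside $A$, hence already carries two distinct nonzero absolute values, and these survive in the full vector no matter what the second block contributes; symmetrically if $b \ne b'$. Since at least one factor differs, the product difference always inherits two distinct nonzero absolute values, so the points are non-attacking. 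Applying this to maximal configurations multiplies their sizes: from valid size-$2^{d_1}$ and size-$2^{d_2}$ configurations one obtains a valid size-$2^{d_1+d_2}$ configuration, which automatically respects the decomposition of the board into $2^{d_1+d_2}$ sub-cubes forced by the upper bound.

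It then remains to supply base cases and combine them arithmetically. The dimension $d=2$ is classical: the standard $4$-queens solution gives $|Q_{max}(4,2)| = 4 = 2^2$. The single genuinely new input I would need is $d=5$, namely an explicit (or exhaustively verified) placement of $32$ mutually non-attacking queens on the $(4,5)$-board, establishing $|Q_{max}(4,5)| = 2^5$. Granting these two base cases, every $d \ge 4$ can be written as $d = 2a + 5b$ with $a,b \ge 0$, since the numerical semigroup generated by $2$ and $5$ has Frobenius number $3$ and thus represents exactly the integers $\ge 4$. Taking the product of $a$ copies of the $d=2$ solution and $b$ copies of the $d=5$ solution yields, by the product lemma, a non-attacking configuration on the $(4,d)$-board of size $4^{a}\cdot 32^{b} = 2^{2a+5b} = 2^d$, matching the upper bound and forcing equality.

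The main obstacle is exactly the $d=5$ base case: the product machinery collapses the entire infinite family onto the single question of whether $32$ non-attacking queens fit on $4^5$, a finite but nontrivial search best settled either by the IP methods of Section \ref{sectionip_formulation} or by an explicit algebraic pattern. I find it telling that the excluded dimension $d=3$ is precisely the one small value not representable as $2a+5b$, which dovetails with the known fact $|Q_{max}(4,3)| = 7 < 8$ and supports the conjecture's threshold $d \ge 4$ being genuine. A secondary point requiring care is the product lemma in its degenerate sub-cases, ensuring that differences whose coordinates vanish on one factor are never silently misread as admissible axis or diagonal attacks.
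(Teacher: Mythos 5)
You should first be aware that the paper does not prove this statement at all: it is recorded as a conjecture, with only the upper bound $|Q_{max}(4,d)| \leq 2^d$ established (via $|Q_{max}(2,d)| = 1$ and the doubling corollary $|Q_{max}(2k,d)| \leq k^d$), and the text explicitly remarks that tightness for $d \geq 4$ remains open. Your proposal is therefore a genuine attempt at the missing lower bound, and as far as I can check it is sound. Your reading of the attack relation matches the paper's definition for distinct queens ($\delta \neq 0$ and all nonzero $|\delta_i|$ sharing one absolute value), and the product lemma is correct: for distinct $(a,b),(a',b')$ at least one factor differs; the difference restricted to that factor already exhibits two distinct nonzero absolute values, because the factor set is non-attacking and a difference with a single nonzero coordinate would be a rook-type attack; and appending further coordinates can never destroy that witness. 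This gives the general supermultiplicativity $|Q_{max}(n,d_1+d_2)| \geq |Q_{max}(n,d_1)| \cdot |Q_{max}(n,d_2)|$, a tool the paper does not have. Combined with the numerical-semigroup step ($d = 2a+5b$ for every $d \geq 4$) and the base cases $|Q_{max}(4,2)| = 4$ and $|Q_{max}(4,5)| = 32$, the lower bound $2^d$ follows for all $d \geq 4$. The one gap you flag, the $(4,5)$ certificate of $32$ queens, is in fact supplied by the paper itself: the computational results for the partial $(n,5)$-queens problem report $|Q_{max}(4,5)| = 32$ solved to optimality, and the summary table lists it as known. So, modulo trusting that computation and the classical $4$-queens solution, your argument upgrades the conjecture to a theorem, which is strictly more than the paper achieves. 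The one point you should make fully explicit when writing this up is the observation that a non-attacking pair within a factor necessarily has at least two nonzero coordinates with distinct absolute values in its difference, since that is precisely what guarantees the witness survives into the product and rules out the degenerate axis-attack cases you mention at the end.
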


It remains to be shown if the bound is tight for all $d \geq 4$.

The following table lists new upper bounds derived from theorem \ref{thm_upperbound_general} for unsolved instances for $n=4,6,8$ as a function of $d$. The size of maximal partial solutions is given in brackets, if known.\\

\begin{table}[H]
    \centering
    \begin{tabular}{c|cccccccc}  \hline 
         d/n&  &    &  4&  & 6  & &8&9\\ \hline 
         2&  &  &   4 \textit{(4)}&  & 9 \textit{(6)} & &16 \textit{(8)} &18 \textit{(9)}\\
         3&  &  &  8 \textit{(7)}&  & 27 \textit{(21)} & &56 \textit{(48)} &108 \textit{(67)}\\
         4&  &  &  16 \textit{(16)}&  & 81 \textit{(80)} & & 256 &486  \\
         5&  &  &  32 \textit{(32)}&  & 243 & &1024 &2673\\
         6&  &  &  64 \textit{(64)}&  & 729 & &4096 &13851\\
         7&  &  &  128 \textit{(128)}&  & 2187 & &16384 &69984\\ 
         8& &   & 256& &6561 & &65536 &341172\\ 
         ... &  && ... && ...&&...&...\\
         d& & & $2^d$& &$3^d$ & &$4^d$& $|Q_{max}(3,d)| \cdot 3^d $\\  \hline 
    \end{tabular}
    \caption{Upper bounds for $|Q_{max}(n,d)|$}
    \label{tab:upperbounds_general_d}
\end{table}

Applying theorem \ref{thm_upperbound_general} for unsolved instances at smaller $n$ and increasing $d$ yields upper bounds significantly lower than the trivial upper bound $n^{d-1}$.
\begin{corollary}
    For all $n$ with $n < 2^d$ it follows that $|Q_{max}(n,d)|<n^{(d-1)}$
\end{corollary}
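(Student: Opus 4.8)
The plan is to recast the statement in terms of the existence of a \emph{full} solution and then split on the parity of $n$. Since $|Q_{max}(n,d)|\le n^{d-1}$ always holds and equals $n^{d-1}$ exactly when a full $(n,d)$-queens solution exists, the claim $|Q_{max}(n,d)|<n^{d-1}$ is equivalent to: no full solution exists when $n<2^d$ (ignoring the degenerate $n=1$). First I would reduce the dimension: each of the $n$ layers of a configuration is an $(n,d-1)$-board (cf.\ proposition \ref{cor_superimpo_3d}), so $|Q_{max}(n,d)|\le n\cdot|Q_{max}(n,d-1)|$, and hence once $|Q_{max}(n,d_0)|<n^{d_0-1}$ is known for the smallest $d_0$ with $2^{d_0}>n$, strictness propagates to every larger $d$: from $|Q_{max}(n,d-1)|\le n^{d-2}-1$ we get $|Q_{max}(n,d)|\le n(n^{d-2}-1)<n^{d-1}$. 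Thus it suffices to treat the tight range $2^{d-1}\le n<2^d$.

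For even $n=2k$ in this range the preceding corollary $|Q_{max}(2k,d)|\le k^d$ settles it immediately: $(n/2)^d=n^d/2^d<n^{d-1}$ precisely because $n<2^d$, and the inequality is strict. (The classical fact that $(3,2)$-queens is unsolvable disposes of the last small case $n=3,d=2$.)

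The genuinely interesting case is odd $n=2\ell+1$, where the subcube partition behind the even corollary is not directly available. Here I would exploit the rigid Latin structure of a hypothetical full solution: applying proposition \ref{cor_superimpo_3d} in each direction, a full solution carries exactly one queen on every axis-parallel line, so fixing any $j$ coordinates to the value $n$ leaves an $(n,d-j)$-subboard that again has one queen per axis line, hence exactly $n^{d-1-j}$ queens. Inclusion--exclusion over the $d$ outer hyperplanes $\{x_i=n\}$ then pins down the number of queens in the corner subcube $\{1,\dots,n-1\}^d$ to
\[
  \#\{\text{queens with all coordinates}\le n-1\}\;=\;\frac{(n-1)^d-(-1)^d}{n}+(-1)^d t,
\]
where $t\in\{0,1\}$ records whether the corner $(n,\dots,n)$ is occupied. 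But $\{1,\dots,n-1\}^d$ is a $(2\ell,d)$-board, so this count is bounded by $|Q_{max}(n-1,d)|\le\ell^d=((n-1)/2)^d$ via the even corollary. The contradiction follows by showing the forced count exceeds this bound, i.e. $\frac{(n-1)^d-(-1)^d}{n}-1>((n-1)/2)^d$, which rearranges to a bound of the form $(n-1)^d\big(\tfrac1n-\tfrac1{2^d}\big)>1+\tfrac1n$ and holds for $n<2^d$ once $d\ge3$ (the smallest odd instances $n=5,7$ at $d=3$ give forced counts $13-t>8$ and $31-t>27$).

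I expect the main obstacle to be the odd case, and within it the structural counting step rather than the final inequality. One must justify carefully that every $j$-fold intersection of outer hyperplanes meets a full solution in \emph{exactly} $n^{d-1-j}$ queens, so that the inclusion--exclusion is an identity rather than an inequality, while tracking the single free parameter $t$. Once the corner count is established, comparing it with $((n-1)/2)^d$ is routine, and by the monotonicity reduction it only has to be checked in the tight base range; there $n-1\ge2^{d-1}$ forces $(n-1)^d\ge2^{(d-1)d}$, so the gap grows rapidly and no delicate case analysis beyond the smallest dimensions is needed.
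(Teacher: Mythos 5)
Your proposal is correct, but it is only half the same as the paper's. For even $n=2k$ your argument coincides exactly with the paper's one-line proof: compare the bound $|Q_{max}(2k,d)|\le k^{d}$ from the preceding corollary with $n^{d-1}$ and observe that the ratio $\frac{(2k)^{d-1}}{k^{d}}=\frac{2^{d}}{n}$ exceeds $1$ precisely when $n<2^{d}$. The paper stops there; as written its proof covers only even $n$, and the obvious patch for odd $n$ (pass to $n+1$ and use $|Q_{max}(n,d)|\le|Q_{max}(n+1,d)|\le((n+1)/2)^{d}$) genuinely fails near the boundary, e.g.\ for $n=2^{d}-1$ one has $((n+1)/2)^{d}=2^{d(d-1)}>(2^{d}-1)^{d-1}=n^{d-1}$. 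Your odd case is therefore a different and substantively stronger argument: you first reduce to the tight range $2^{d-1}\le n<2^{d}$ via $|Q_{max}(n,d)|\le n\,|Q_{max}(n,d-1)|$, then assume a full solution, use the Latin structure (exactly one queen per axis line, hence exactly $n^{d-1-j}$ queens on every $j$-fold intersection of the hyperplanes $\{x_i=n\}$ for $j\le d-1$, proved by induction on $j$ using the per-line upper bound and a counting/pigeonhole step) to pin the number of queens in the corner subcube $\{1,\dots,n-1\}^{d}$ at $\frac{(n-1)^{d}-(-1)^{d}}{n}+(-1)^{d}t$, and contradict $|Q_{max}(n-1,d)|\le((n-1)/2)^{d}$; the final inequality is routine for $d\ge4$ and is checked directly for $n=5,7$ at $d=3$, with $(3,2)$ handled classically. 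I verified the corner-count identity and the numerics ($13-t>8$, $31-t>27$, and e.g.\ $455+t>256$ for $n=9$, $d=4$); the one point that must be stated carefully, as you note, is that the $j$-fold intersection counts are exact identities rather than upper bounds, which your induction does deliver. The trade-off is clear: the paper's route is a one-liner but incomplete for odd $n$, while yours costs a structural counting lemma but actually establishes the corollary for all $n\ge2$ (and you rightly flag that $n=1$ is a degenerate exception to the statement as literally written, since $|Q_{max}(1,d)|=1=1^{d-1}$).
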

\begin{proof}
Compare the size of full solutions $n^{d-1}$ to the upper bound 
    \begin{align}
    \frac{(2k)^{d-1}}{k^{d}} = \frac{2^{d-1}}{k} = \frac{2^{d}}{n}
\end{align}
\end{proof}

\newpage
\subsection{Enumeration of Solutions}
\begin{table}[H]
\begin{center}
\noindent
\begin{tabularx}{0.6\textwidth}{*{5}{r}}
    \hline 
    $n$ & $\mathcal{Q}(n,2)$ & $\mathcal{Q}(n,3)$ & $\mathcal{Q}(n,4)$& $\mathcal{Q}(n,5)$ \\ \hline%\hline
    1   & 1     & 1   & 1& 1  \\ %\hline
    2   & \textit{4}     & \textit{8}   & \textit{16} & \textit{32} \\% \hline
    3   & \textit{8}     & \textit{16}& \textit{4992} & \textit{71154} \\% \hline
    4   & 2     & \textit{1344}& \textit{404564}&\\% \hline
    5   & 10    & \textit{1056} & & \\% \hline
    6   & 4     & \textit{912}  & & \\% \hline
    7   & 40    & \textit{96} & & \\% \hline
    8   & 92    & & & \\% \hline
    9   & 352   & & & \\% \hline
    10  & 724   & & & \\% \hline
    11  & 2680  & 264 & & \\% \hline
    12  & 14200 & & & \\% \hline
    13  & 73712 & 624 & & \\% \hline
    14  & 365596& & &\\ \hline
\end{tabularx}
\caption{\label{table_enumeration_d3}Known sequence of $\mathcal{Q}(n,d)$ and enumeration of maximal partial solutions with $|Q(n,d)| < n^{d-1}$ (cursive). Solutions were computed using the methods described in section \ref{sec_comp_results}.}
\end{center}
\end{table}

\begin{proposition} \label{cor_2d}
     $\mathcal{Q}(2,d) = 2^d$ 
\end{proposition}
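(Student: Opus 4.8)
The plan is to begin by recognizing what $\mathcal{Q}(2,d)$ actually counts here. By Corollary \ref{cor_n2_qmax} we have $|Q_{max}(2,d)| = 1$ for all $d$, so for $d \geq 2$ we get $|Q_{max}(2,d)| = 1 < 2^{d-1} = n^{d-1}$. Hence the $(2,d)$-queens problem in the sense of Problem 1 has no solution, and the entry $\mathcal{Q}(2,d)$ in Table \ref{table_enumeration_d3} is one of the cursive entries, i.e. it records the number of \emph{maximal partial} solutions rather than full solutions. So the statement to prove reduces to: there are exactly $2^d$ maximal partial solutions on the $(2,d)$-board.

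Next I would show that every maximal partial solution is a single queen, and conversely. By the definition of a partial solution, any single queen $q$ with $\norm{q} \leq 2$ qualifies: it fits the board, and condition (ii) holds vacuously since there is no distinct pair to attack. Because the maximum attainable cardinality is $|Q_{max}(2,d)| = 1$, every such singleton attains the maximum and is therefore a maximal partial solution; conversely, any maximal partial solution has cardinality $1$ by Corollary \ref{cor_n2_qmax} and is thus a singleton. This establishes a bijection between maximal partial solutions and squares of the board.

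Finally I would carry out the (trivial) count. The $(2,d)$-board consists precisely of the tuples $(a_1, \ldots, a_d)$ with each $a_i \in \{1,2\}$, of which there are exactly $2^d$. Each such square yields a distinct single-queen maximal partial solution, and by the previous step these exhaust all of them, giving $\mathcal{Q}(2,d) = 2^d$.

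The only genuine subtlety is the bookkeeping point in the first step: one must notice that $\mathcal{Q}(2,d)$ does not count solutions to Problem 1 (which do not exist for $d \geq 2$) but maximal partial solutions, as signaled by the cursive entries in Table \ref{table_enumeration_d3}. Once this is pinned down through Corollary \ref{cor_n2_qmax}, the remaining argument is an immediate square count with no computational difficulty.
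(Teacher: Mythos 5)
Your proof is correct and follows essentially the same route as the paper, which simply states that the result follows from Corollary \ref{cor_n2_qmax}; you have just filled in the details (every singleton is a maximal partial solution since $|Q_{max}(2,d)|=1$, and there are $2^d$ squares), including the correct observation that the cursive table entries count maximal partial solutions rather than full solutions.
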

\begin{proof}
    Follows from corollary \ref{cor_n2_qmax}.
\end{proof}

\subsubsection{Lower Bounds}
A lower bound for all $n,d$ with $gcd(n,(2^d-1)!)=1$ can be formulated given $c(n)$ (see \ref{subsubsec_lowerbounds_3d}) for general $d$, i.e. through a function $c(n,d)$.

\subsubsection{Upper Bounds} \label{subsubsec_upperbounds_generald}
The recently described enumeration of higher dimensional permutations \cite{keevash2018existence} gives a first weak upper bound for $\mathcal{Q}(n,3)$, as it corresponds to the number of solutions to the $(n,d)$-rooks problem (c.f. \cite{OEISA002860}).
\begin{theorem}[Keevash \cite{keevash2018existence}]
    The number of $d$-dimensional permutations of order $n$ is $(n/e^d + o(n))^{n^d}$.
\end{theorem}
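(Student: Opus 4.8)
The plan is to read a $d$-dimensional permutation of order $n$ as a \emph{Latin hypercube}, i.e.\ a map $L:[n]^d\to[n]$ whose restriction to every axis-parallel line is a bijection, equivalently the $\{0,1\}$-array on $[n]^{d+1}$ carrying a single $1$ on each of its axis-lines, equivalently a maximal non-attacking rook placement in the corresponding $(n,d{+}1)$-rook problem. Writing $N_d(n)$ for the count, there are $n^d$ cells to fill, which already explains the exponent; the entire content of the theorem is the base constant, and the claim is equivalently
\[
\log N_d(n) \;=\; n^d\bigl(\log n - d + o(1)\bigr),
\]
since $\log(n/e^d)=\log n - d$. I would prove this by establishing matching upper and lower bounds on $\log N_d(n)$, and I expect the two halves to be of very unequal difficulty: the $d=2$ case is the classical Latin-square count, and the general-$d$ lower bound is the crux.

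For the upper bound I would run the entropy / permanent argument of Linial--Luria. Let $L$ be a uniformly random Latin hypercube, so $\log N_d(n)=H(L)$, and reveal $L$ one slice at a time, partitioning $[n]^d$ into the $n$ slices obtained by fixing the last coordinate. Given the first $j$ slices, the admissible completions of slice $j{+}1$ form a system of distinct representatives whose number is the permanent of an explicit $\{0,1\}$-matrix recording, for each cell, which symbols remain legal under the line constraints. Bounding this permanent by Br\'egman's (Minc) inequality $\mathrm{per}(A)\le\prod_i (r_i!)^{1/r_i}$ and summing $\log$'s across slices, Stirling's estimate $\log k! = k\log k - k + O(\log k)$ collapses the sum to $n^d(\log n - d)+o(n^d)$, giving $N_d(n)\le(n/e^d+o(n))^{n^d}$. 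This half is essentially routine once the slice-by-slice permanent encoding is set up.

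The lower bound is where the real work lies. For $d=2$ it is classical: building the square row by row, each new row is a symbol-permutation avoiding the already-used symbols, counted by a permanent of a near-doubly-stochastic matrix, and the Egorychev--Falikman resolution of the van der Waerden conjecture, $\mathrm{per}(A)\ge n!/n^n$ for doubly stochastic $A$, yields $N_2(n)\ge (n!/n^n)^n=(n/e^2+o(n))^{n^2}$. For $d\ge 3$ no comparable permanent inequality controls the completions, so I would instead import Keevash's probabilistic absorption machinery: use the R\"odl nibble / random greedy process to build a near-complete hypercube while tracking, via a self-correcting martingale, that the number of legal choices at each step concentrates around the entropy-optimal value $\sim n/e^d$, so that the product of per-cell choice-counts is $(n/e^d)^{(1-o(1))n^d}$; then complete the remaining sparse set of cells using preinstalled absorbers, arguing that this completion stage contributes only a sub-dominant factor to the count. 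The main obstacle is exactly this lower bound for $d\ge 3$: showing that a typical partial filling extends and that the number of extensions concentrates sharply enough for the greedy count to match the Br\'egman upper bound to first order in the exponent. The sharp concentration of the nibble and the construction of absorbers that do not erode the constant $e^{-d}$ are the technically heavy steps, and they are the reason the result is attributed to the designs-existence/counting program rather than to an elementary argument.
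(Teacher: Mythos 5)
First, note that the paper does not prove this statement at all: it is quoted from \cite{keevash2018existence} as an external result (the upper bound is due to Linial and Luria, the matching lower bound to Keevash), so there is no internal proof to compare yours against. Judged on its own terms, your proposal is a proof programme rather than a proof, and one of its two halves would fail as described.

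The problematic half is the upper bound. Your slice-by-slice encoding, in which the admissible completions of slice $j+1$ are counted by a matrix permanent and controlled by Br\'egman's inequality, is correct only for $d=2$, where a slice is a single row and its completions are systems of distinct representatives of an $n\times n$ $\{0,1\}$-matrix. For $d\ge 3$ a slice is itself a $(d-1)$-dimensional array subject to line constraints in $d-1$ directions plus per-cell forbidden symbols; the number of its completions is a higher-dimensional permanent, to which Br\'egman's inequality does not apply, and no analogue of Br\'egman sharp enough to recover the constant $e^{-d}$ is available. This is precisely why Linial and Luria replaced the row-by-row permanent argument with an entropy argument that reveals the $n^d$ entries one at a time in a uniformly random order and bounds each conditional entropy by the logarithm of the number of still-available symbols, averaging over the random revelation order. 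Calling the upper bound ``essentially routine'' therefore hides the step that actually requires a new idea. Your account of the lower bound correctly locates the difficulty (Egorychev--Falikman suffices only for $d=2$; for $d\ge 3$ one needs Keevash's absorption/randomized-construction machinery with sharp concentration of the per-step choice counts), but it remains a pointer to that machinery rather than an argument, so nothing in the proposal beyond the $d=2$ case is actually verifiable.
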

\begin{corollary}
    $\mathcal{Q}(n,d) < (n/e^3 + o(n))^{n^d}$.
\end{corollary}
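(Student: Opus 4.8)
The plan is to bound $\mathcal{Q}(n,d)$ above by the number of $(n,d)$-rook solutions and then feed that count into Keevash's theorem, obtaining the stated estimate as a (deliberately loose) consequence. First I would invoke Proposition \ref{prop_rooks_contained_in_queens}: because the squares dominated by a rook form a subset of those dominated by a queen on the same square, every solution of the $(n,d)$-queens problem is also a solution of the $(n,d)$-rooks problem. Since the full queens solutions and the full rook solutions both consist of exactly $n^{d-1}$ pieces, the set of queens solutions is literally a subset of the set of rook solutions, and therefore $\mathcal{Q}(n,d) \le \mathcal{R}(n,d)$, where $\mathcal{R}(n,d)$ denotes the number of $(n,d)$-rook solutions.

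Next I would identify $\mathcal{R}(n,d)$ with a count of higher-dimensional permutations. A non-attacking placement of $n^{d-1}$ rooks on the board $[n]^d$ is precisely a $0/1$ array on $[n]^d$ with exactly one $1$ in every axis-parallel line, which is the definition of a $(d-1)$-dimensional permutation of order $n$ (a permutation matrix for $d=2$, a Latin square for $d=3$). Hence $\mathcal{R}(n,d)$ equals the number of $(d-1)$-dimensional permutations of order $n$, and Keevash's theorem \cite{keevash2018existence}, applied with its dimension parameter set to $d-1$, yields $\mathcal{R}(n,d) = (n/e^{d-1} + o(n))^{n^{d-1}}$.

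Finally I would derive the claimed bound by comparing the two asymptotic expressions directly. Taking logarithms, $\log \mathcal{R}(n,d) = n^{d-1}\big(\log n - (d-1) + o(1)\big)$, whereas $\log\!\big((n/e^3 + o(n))^{n^d}\big) = n^{d}\big(\log n - 3 + o(1)\big)$. The right-hand expression carries an extra factor of $n$ in front of the dominant $\log n$ term, so its logarithm exceeds that of $\mathcal{R}(n,d)$ for all sufficiently large $n$, irrespective of how $d$ compares to $3$. Combining this with $\mathcal{Q}(n,d)\le\mathcal{R}(n,d)$ gives $\mathcal{Q}(n,d) < (n/e^3 + o(n))^{n^d}$ for all large $n$, which is the assertion (understood, as usual for an $o(n)$ statement, to hold for $n$ large enough).

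The main obstacle here is entirely one of bookkeeping rather than analysis: one must keep the board dimension $d$ aligned with the dimension parameter in Keevash's theorem, respecting the off-by-one between an $[n]^d$ board and a $(d-1)$-dimensional permutation, and then verify that the coarse target expression, with its fixed base $n/e^3$ and exponent $n^d$, genuinely dominates the sharper permutation count $(n/e^{d-1}+o(n))^{n^{d-1}}$. All the genuine difficulty, namely the asymptotic enumeration of higher-dimensional permutations, is already absorbed into the cited theorem, so once the containment and the identification with permutations are in place the remaining step is only the elementary logarithmic comparison above.
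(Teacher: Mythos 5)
Your proof follows essentially the same route as the paper's one-line argument: the solutions to the $(n,d)$-queens problem are contained in those of the $(n,d)$-rooks problem (Proposition \ref{prop_rooks_contained_in_queens}), and the rook count is then bounded via Keevash's enumeration of higher-dimensional permutations. You additionally supply two details the paper leaves implicit --- the off-by-one dimension bookkeeping (rook solutions on the $[n]^d$ board are $(d-1)$-dimensional permutations, counted as $(n/e^{d-1}+o(n))^{n^{d-1}}$) and the logarithmic comparison showing the coarse expression $(n/e^3+o(n))^{n^d}$ indeed dominates this sharper count for large $n$ --- which makes your write-up a more careful version of the same proof rather than a different one.
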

\begin{proof}
    By proposition \ref{prop_rooks_contained_in_queens}; the set of solutions to the $(n,d$)-queens problem is contained in the set of solutions to the $(n,d$)-rooks problem.
\end{proof}

Such counting results for combinatorial designs yield bounds for further variants or generalizations of the $(n,d)$-queens problem.

Recall theorem \ref{thm_simkin_nobel} and that for all $n$ there exists $\alpha$ such that
\begin{align}
    \mathcal{Q}(n,2) =  \left((1+o(1))n \cdot \frac{n}{e^\alpha} \right) ^n \nonumber
\end{align}
which yields a first tighter bound as
\begin{align}
    \mathcal{Q}(n,d) <  \left((1+o(1))n \cdot \frac{n}{e^\alpha} \right) ^{n^{d-1}}
\end{align}

\newpage
\subsection{Related Problems}
\subsubsection{$(n,d)$-Queens Completion}
Recall the $(n,2)$-queens completion problem concerned with whether a non-attacking set of queens can be completed to a set that is a solution to the $(n,2)$-queens problem. We generalize the following definition by \cite{glock2022n}:
\begin{definition}[$(n,d)$-queens completion threshold]
    Define $qc(n,d)$ as the maximum integer with the property that any partial $(n,d)$-queens configuration $Q$ of size at most $|Q| = qc(n,d)$ can be completed to a maximal partial configuration. We call $qc(n,d)$ the $(n,d)$-queens completion threshold.
\end{definition}
\begin{theorem}[Glock, Munh\'{a} Correia, Sudakov \cite{glock2022n}] \label{thm_completion_bound}
    For all sufficiently large $n$, we have $n/60 \leq qc(n,2) \leq n/4$.
\end{theorem}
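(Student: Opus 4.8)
The plan is to treat a queens configuration as a permutation: a full $(n,2)$-solution corresponds to a bijection $\sigma\colon[n]\to[n]$, placing a queen at $(i,\sigma(i))$, for which the $n$ differences $\sigma(i)-i$ are pairwise distinct and the $n$ sums $\sigma(i)+i$ are pairwise distinct. A partial configuration $P$ then prescribes $\sigma$ on $|P|$ rows while reserving the corresponding columns, differences and sums, and completing $P$ means extending this partial injection to such a full $\sigma$. Both bounds become statements about when such an extension is forced, or can be obstructed.

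For the upper bound I would exhibit one explicit non-attacking set of size a little above $n/4$ that admits no completion. I would aim for a Hall-type (counting) defect: after deleting the used rows and columns, arrange the queens so that some linear-sized set $R_0$ of free rows collectively attacks all but fewer than $|R_0|$ of the free columns, so that even a plain system of distinct representatives for $R_0$ fails and \emph{a fortiori} no diagonal-respecting completion exists. The naive variant — blocking every free cell of a single row — already gives infeasibility, but since a queen meets a fixed row in at most three of its cells (one shared column, at most one per diagonal direction) it costs about $n/3$ queens; the refinement to the constant $1/4$ should come from a more global arrangement together with careful boundary accounting, and the only thing ultimately needed is that some explicit family of $\approx n/4$ queens forces infeasibility.

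The lower bound is the substance of the theorem, and here I would use the absorption method. I would start from a full solution $Q_0$, which exists for all large $n$ by Theorem \ref{thm_pauls} and is moreover extremely abundant by Theorem \ref{thm_simkin_nobel}, and modify $Q_0$ into a full solution containing $P$. I would insert the at most $n/60$ prescribed queens one at a time by local \emph{switchings}: to force $(r,c)$ into the solution when $Q_0$ currently has queens $(r,c')$ and $(r'',c)$, reroute along a short alternating cycle of the permutation, deleting a few queens and re-placing them so that $(r,c)$ is occupied and no two new queens share a row, column, sum, or difference. To guarantee such a rerouting always exists and never clashes with $P$ or with earlier insertions, I would reserve in advance a linear-sized absorbing pool of rows and columns whose diagonals are kept free, and route every switching through this pool; since $|P|\le n/60$, each insertion consumes only a bounded number of rows, columns, and diagonal values, so a pool of density $O(|P|/n)$ should suffice.

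The hard part will be the diagonal bookkeeping in the absorption step: unlike an ordinary bipartite-matching completion, where reserving free rows and columns is enough, each switching also occupies sum- and difference-diagonals, and the difficulty is to choose the rerouting so that all the sum/difference values it introduces remain unused — simultaneously for every one of the $|P|$ insertions. Quantifying this, namely proving that the reserved pool retains enough free diagonals after each step and thereby extracting the explicit constant $1/60$, is where essentially all the work lies; the permutation reformulation and the greedy insertion framework around it are comparatively routine. Closing the remaining gap between $1/60$ and $1/4$ I would leave open.
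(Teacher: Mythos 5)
The paper does not prove this statement: Theorem~\ref{thm_completion_bound} is imported verbatim from \cite{glock2022n} and no argument for it appears anywhere in the text, so there is no in-paper proof to compare yours against. Judged on its own, your proposal is a plan rather than a proof, and the gaps sit exactly where the content of the theorem lies. For the upper bound you correctly observe that naively attacking every free cell of one row costs about $n/3$ queens and that the constant $1/4$ must come from a Hall-type violator (a set of free rows whose jointly unattacked free columns are too few), but you never exhibit the configuration; ``some explicit family of $\approx n/4$ queens forces infeasibility'' is the entire claim, not a step toward it. For the lower bound, the permutation reformulation and the idea of inserting the prescribed queens by short switchings routed through a reserved absorbing pool is the right general shape, but as you yourself note, the argument that the pool retains enough rows, columns, \emph{and} sum/difference diagonals after each of the up to $n/60$ insertions --- simultaneously, and without the switchings interfering with one another or with $P$ --- is precisely the theorem, and it is left entirely open. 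A reviewer cannot accept ``quantifying this is where essentially all the work lies'' as a proof of the quantity.

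One further caution on the lower-bound strategy: a local switching that reroutes an alternating cycle of the permutation preserves the row and column constraints automatically, but there is no a priori reason a \emph{short} rerouting exists whose new cells avoid all occupied sums and differences; the number of forbidden diagonal values grows with every insertion, and controlling this is not a matter of bookkeeping density alone but requires showing that suitable completion-friendly structure (in \cite{glock2022n}, engineered via a carefully chosen starting configuration rather than an arbitrary solution $Q_0$ from Theorem~\ref{thm_pauls}) survives throughout. Starting from an arbitrary full solution, as you propose, may simply not admit the required switchings.
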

\begin{theorem}[Gent \cite{gent2017complexity}]
    The $(n,2)$-queens completion problem is NP-complete and \#P-complete.
\end{theorem}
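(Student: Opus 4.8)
The plan is to establish both hardness results through a single gadget-based reduction, handling the easy directions first. The $(n,2)$-queens completion problem lies in NP because, given a candidate set $Q$ with $Q^* \subseteq Q$, one can verify in time $O(n^2)$ that $|Q| = n$, that $\norm{Q} \le n$, and that no two queens attack one another: it suffices to check that all row-indices, all column-indices, all sum-diagonals $a_1 + a_2$, and all difference-diagonals $a_1 - a_2$ occurring in $Q$ are pairwise distinct. The same polynomial-time verifier places the associated counting version in \#P.

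For NP-hardness I would reduce from a standard NP-complete problem — 3-SAT, or more conveniently a constraint-graph / one-in-three-style variant whose structure maps cleanly onto line constraints. The core idea is to construct the fixed set $Q^*$ on a sufficiently large board so that the remaining freedom in completing it to a full non-attacking placement exactly encodes a truth assignment. Concretely, I would design: (i) variable gadgets, each a small region whose only legal completions correspond to a literal being true or false; (ii) wire gadgets that propagate a chosen value across the board by forcing the positions of queens in successive rows and columns; and (iii) clause gadgets in which the diagonal attacks between incoming wires are arranged so that a completion exists if and only if the clause is satisfied. Generous empty buffer regions together with a large $n$ would keep the rook-type (row/column) constraints and the diagonal constraints of distinct gadgets from interfering.

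The delicate part of the construction — and the main obstacle — is the rigidity of the diagonal interactions. On a two-dimensional board every queen constrains two unbounded diagonals in addition to its row and column, and these long-range diagonal attacks are precisely what make gadget isolation hard: two gadgets placed far apart can still collide along a shared diagonal. I would therefore concentrate most of the effort on (a) routing wires and their crossings so that information passes without a parity conflict along diagonals, and (b) proving a correspondence lemma asserting that every legal completion of $Q^*$, restricted to a gadget, is one of the intended canonical configurations, so that no spurious configuration slips through. This soundness-and-completeness bookkeeping — establishing that completions of the board are in bijection with solutions of the source instance — is the heart of the proof.

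Finally, for \#P-completeness I would make the reduction parsimonious (or weakly parsimonious with an easily computed multiplier). By arranging each gadget so that a fixed truth assignment extends to \emph{exactly one} completion — padding away any don't-care freedom with forced queens — the number of completions of $Q^*$ equals the number of satisfying assignments of the 3-SAT instance. Since counting satisfying assignments of 3-SAT is \#P-complete, this transfers \#P-completeness to the queens completion counting problem. If exact parsimony proves awkward to maintain, I would instead track a uniform, polynomially computable number of completions per solution and divide it out, which still yields a valid \#P-completeness reduction.
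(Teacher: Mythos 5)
The theorem you are proving is stated in the paper purely as a cited result from Gent, Jefferson and Nightingale; the thesis contains no proof of it, so there is nothing internal to compare your attempt against. Judged on its own terms, your write-up is a plan rather than a proof. The easy half is fine: membership in NP via the $O(n^2)$ verifier (distinctness of rows, columns, sum-diagonals $a_1+a_2$ and difference-diagonals $a_1-a_2$), and the corresponding placement of the counting problem in \#P, are correct and essentially all one needs to say there. But the hardness half never materialises. You correctly identify that the whole difficulty is the unbounded reach of the diagonals --- two gadgets placed arbitrarily far apart can still collide along a shared diagonal, so ``generous empty buffer regions'' do not by themselves isolate anything --- and then you defer exactly that difficulty to future work: no variable gadget is exhibited, no wire or crossing is constructed, and the ``correspondence lemma'' that every completion restricted to a gadget is canonical is asserted as a goal rather than proved. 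Since the content of the theorem \emph{is} the construction that tames the diagonal interference (in the published proof this takes substantial machinery, e.g.\ restricting gadgets to carefully chosen diagonal classes and pre-blocking the rest with forced queens), what you have written establishes only the upper bounds, not completeness.

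The \#P part inherits the same gap: a parsimonious (or uniformly weighted) reduction is the right idea, but it presupposes the very gadgets whose existence you have not demonstrated, and making a queens-placement gadget admit \emph{exactly one} completion per truth value is an additional rigidity requirement on top of mere correctness. If you want to turn this into an actual proof, you should either carry out the construction in full or restructure the argument as a reduction from an intermediate problem whose hardness you first establish (as the original authors do), so that the diagonal bookkeeping is confined to a single, explicitly verified lemma.
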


A first observation regarding $(n,3)$-queens completion is that there exist squares for $n=3,5,6,7$ that may not be occupied, i.e., there exists no maximal partial solution placing a queen on that square (see section \ref{subsec:comp_density}). For increasing $d$, we expect this phenomenon to extend to higher $n$ while also noting that it likely vanishes for sufficiently large $n$.

Second, for $n$, for which only regular solutions exist, we can construct sets of just three queens that cannot be completed to a maximal solution. This is due to their linearity, i.e., the placement of all solutions described by piecewise linear functions.\\

\begin{table}[H]
\begin{center}
\noindent
\begin{tabularx}{0.45\textwidth}{*{4}{r}}
    \hline 
    $n$ & $qc(n,3)$ & $qc(n,4)$ & $qc(n,5)$   \\ \hline%\hline
    1 & 1 & 1 & 1  \\%\hline
    2 & 1 & 1 & 1 \\%\hline
    3 & 0 & 0 & 0\\%\hline
    4 & 1 & 0  &  \\% \hline
    5 & 0 &   & \\% \hline
    6 & 0 &   & \\% \hline
    7 & 0 &   & \\% \hline
    ...&  &   &  \\% \hline
    11 & 2&   & \\% \hline
    12 & -&   & \\% \hline
    13 & $\leq 2$&   & \\ \hline
\end{tabularx}
\caption{\label{table:completion_3d}$qc(n,d)$}
\end{center}
\end{table}

\begin{proposition}
 $qc(n,d) = 1$ for all $n \leq 2$
\end{proposition}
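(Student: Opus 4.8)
The plan is to reduce the statement to the single fact, already recorded in Corollary \ref{cor_n2_qmax}, that $|Q_{max}(2,d)| = 1$ for every $d$, together with the trivial observation that $|Q_{max}(1,d)| = 1$. Once one knows that the maximal partial solutions on these boards consist of exactly one queen, the value of the completion threshold is forced, and no combinatorial work beyond bookkeeping remains.

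First I would handle the lower bound $qc(n,d) \ge 1$ for $n \in \{1,2\}$ simultaneously. For $n = 1$ the $(1,d)$-board is the single square $(1,\dots,1)$, and for $n = 2$ Corollary \ref{cor_n2_qmax} shows that one queen attacks the whole $(2,d)$-board; so in both cases no two queens are mutually non-attacking. Hence the only valid partial solutions are the empty set and the single-queen configurations, and each single-queen configuration is already a maximal partial solution. It follows that every partial configuration of size at most $1$ completes to a maximal partial solution: the empty set by adjoining any one queen, and a single queen by doing nothing. This yields $qc(n,d) \ge 1$.

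For the matching upper bound I would note that, since $|Q_{max}(n,d)| = 1$ on both boards, there exist no valid partial solutions of size $2$ whatsoever, because any two distinct queens attack one another. The threshold is therefore capped at $|Q_{max}(n,d)| = 1$, and combining with the previous paragraph gives $qc(n,d) = 1$.

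The only delicate point, and the one I expect to require care, is the reading of the threshold definition in this degenerate regime: because no size-$2$ configurations exist, the completion requirement is vacuously met at size $2$, so a naive reading might suggest a larger value. I would resolve this by adopting (and explicitly pointing out) the convention that $qc(n,d)$ cannot exceed $|Q_{max}(n,d)|$ — the completion threshold measures how many queens one may freely place while still guaranteeing extension to a maximal solution, a quantity that is meaningless beyond the maximal solution size. With that convention the maximum meaningful value is exactly $1$, and everything else is immediate from $|Q_{max}(n,d)| = 1$.
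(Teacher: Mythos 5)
Your proof is correct and follows essentially the same route as the paper, which simply cites Corollary \ref{cor_n2_qmax} ($|Q_{max}(2,d)|=1$) and leaves the rest implicit. Your explicit treatment of the degenerate upper bound --- noting that the completion condition is vacuous at size $2$ and must be capped by $|Q_{max}(n,d)|$ --- is a genuine point of care that the paper glosses over, but it does not change the substance of the argument.
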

\begin{proof}
    Follows directly from corollary \ref{cor_n2_qmax}.
\end{proof}
\begin{proposition}
 $qc(3,d) = 0$ 
\end{proposition}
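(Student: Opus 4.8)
The plan is to exhibit a single-queen placement on the $(3,d)$-board that already cannot be extended by even one further queen, while simultaneously showing that a genuine maximal partial solution contains at least two queens. Since $qc(3,d)$ is the largest $k$ for which \emph{every} partial configuration of size at most $k$ can be completed to a maximal partial solution, producing one size-$1$ configuration that cannot be so completed forces $qc(3,d) < 1$; as the empty configuration (size $0$) extends to any maximal partial solution, this yields $qc(3,d)=0$.

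First I would place the single queen on the central square $(2,2,\ldots,2)$. By Corollary \ref{cor_max_attacking_squares}, a queen on the center of an odd board attacks $\tfrac{1}{2}(3^d-1)(n-1)+1$ squares; specializing to $n=3$ gives $\tfrac{1}{2}(3^d-1)\cdot 2 + 1 = 3^d$, i.e. the entire board. Hence the configuration $\{(2,2,\ldots,2)\}$ is a partial solution that dominates every square, so no second non-attacking queen exists and it cannot be extended.

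Next I would verify $|Q_{max}(3,d)|\ge 2$ for $d\ge 2$, so that the above singleton is \emph{not} itself a maximal partial solution. Recall from the definition of the queen's attack that two queens attack one another exactly when their coordinate-difference vector has the form $m\epsilon$ with $m\in\mathbb{Z}$ and nonzero $\epsilon\in\{-1,0,1\}^d$, equivalently when all of its nonzero entries share a single absolute value. Thus the two queens $(1,1,\ldots,1)$ and $(2,3,1,\ldots,1)$, whose difference $(1,2,0,\ldots,0)$ has the two distinct nonzero absolute values $1$ and $2$, are mutually non-attacking; this gives a partial solution of size $2$, and hence $|Q_{max}(3,d)|\ge 2$. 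Alternatively one may invoke the monotonicity $|Q_{max}(3,d)|\ge |Q_{max}(3,2)|=2$.

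Combining the two steps, the size-$1$ configuration $\{(2,\ldots,2)\}$ is a partial solution that can never be grown to the size $|Q_{max}(3,d)|\ge 2$ of a maximal partial solution, so it witnesses that configurations of size $1$ need not be completable, giving $qc(3,d)=0$. I expect no serious obstacle; the only point requiring care is the definitional bookkeeping, namely that ``cannot be extended at all'' is strictly stronger than ``is not maximal,'' which is precisely where the lower bound $|Q_{max}(3,d)|\ge 2$ enters. The degenerate case $d=1$ (where the central queen is already maximal) is implicitly excluded, matching the tabulated values $qc(3,3)=qc(3,4)=qc(3,5)=0$.
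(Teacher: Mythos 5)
Your proof is correct and follows essentially the same route as the paper's: both place a single queen at the central square $(2,2,\ldots,2)$, invoke Corollary \ref{cor_max_attacking_squares} to see it dominates the whole $(3,d)$-board, and then exhibit two mutually non-attacking queens (your pair $(1,\ldots,1)$, $(2,3,1,\ldots,1)$ is just a coordinate permutation of the paper's) to conclude $|Q_{max}(3,d)|\ge 2$. Your explicit remarks on the difference-vector criterion and the degenerate case $d=1$ are welcome additional care but do not change the argument.
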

\begin{proof}
    Follows from corollary \ref{cor_max_attacking_squares}. Let $Q^* = \{ (2,2,...,2) \}$, then the entire board is dominated. It remains to be shown that $|Q_{max}(3,d)|>1$. To prove this place one queen on $(1,1,...,1,1,1)$ and observe that is does not attack $(1,1,...1,2,3)$, so there exists a partial solution of size $2$.
\end{proof}
\newpage
\subsection{Open Questions}

\begin{question}[]
    Does there exists $n_d$, such that for all $n \geq n_d$ the $(n,d)$-queen graph is $n^{d-1}$-colourable?
\end{question} 
Again, the existence of an $n$-colouring of the $(n,d)$-queen graph is necessary for the existence of a solution to the $(n,d)$-queens problem. \\

\begin{question}[Van Rees] \label{question_d_vanress}
    Is $\text{gcd}(n,(2^d-1)!)=1$ a necessary condition for the $(n,d)$-queens problem to have a solution?
\end{question}

We know that there exist sufficiently many superimposable solutions for $n$ with $\text{gcd}(n,(2^d-1)!)>1$ by theorem \ref{thm_iyer} and theorem \ref{thm_vasquez}. However, there is no known certificate for such $n$.\\

\begin{question}[]
    Are there non-regular solutions to the $(n,d)$-queens problem?
\end{question}
All known certificates are regular; a certificate as described in question \ref{question_d_vanress} (if it exists) would likely be non-regular.\\

\begin{question}
Does there exist an equivalent to the $n$-queens constant (see Theorem \ref{thm_simkin_nobel}) for given $d$ for all $n$ with $gcd(n,(2^d-1)!)=1$. 
\end{question}
 
This connects to whether non-regular solutions for sufficiently large $n$ exist and the enumeration of such solutions. If such constants exist for given $d$, one may ask if there is a relation between them and the $n$-queens constant and how it may be described.\\

\begin{question}[]
    Does the density of the solutions to the partial $(n,d)$-queens problem converge for given $d$ and $n \to \infty$? 
\end{question}
First, we may note conjecture \ref{conj_subcube_heur_gend} is necessary for the distribution to converge. Second, we have observed a uniform distribution for $n$ for which only regular solutions exist and a (empirical) distributions reminiscent of Fig. \ref{fig:dens_simkin} for all other $n$. It is possible that for $n \to \infty$ and provided non-regular solutions to the $(n,d)$-queens problem exist, the distribution in question is shaped as hinted by the empirical distributions for maximal partial solutions and that it can be described for general $d$ in closed form.\\
\begin{question}
    Does there exist a completion bound for $d > 2$ (c.f. theorem \ref{thm_completion_bound})? In other words, does there exist $c_d$, such that for sufficiently large $n$
    \begin{align}
        \frac{n^{d-1}}{c_d} \leq qc(n,d) 
    \end{align}
\end{question}
For a lower bound to $qc(n,d)$ as a function of $n$, the existence of non-regular solutions is necessary.

\newpage
\section{Integer Programming Formulation} \label{sectionip_formulation}
\subsection{Base Model}
This integer programming formulation considers the $\frac{(3^d-1)}{2}$ hyperplanes of attack of a queen on the $(n,d)$-board. It restricts the sum over all squares in each of those lines to be less or equal to $1$, as two or more queens in one such line would attack each other. Thus, the proposed inequalities are sufficient to describe non-attacking configurations.

Recall the hyperplanes of attack identified by the vectors $\epsilon = (\epsilon_1, ..., \epsilon_d)$. For each pair of nonzero $\epsilon$, $\epsilon^{'}$ with $\epsilon = -\epsilon^{'}$, we require one set of inequalities summing over the squares of the board in the direction of $\epsilon, \epsilon^{'}$.

\subsubsection{The Partial $(n,d)$-Queens Problem}
Denote $S$ the set of all squares on the $(n,d)$-board and $H$ the set of all $\frac{(3^d-1)}{2}$ vectors of attack.
Let $L_h$ be the superset of all sets of squares that lie in one of the hyperplanes corresponding to vector $h \in H$. Then the partial $(n,d)$-queens problem can be formulated as follows
\begin{alignat*}{3}
  & \text{max } &       & \sum_{s \in S}^{n} x_s \\[2ex]
  & \text{s.t } & \quad &\; \sum_{s \in L }^{n} x_s  \leq 1,  && \quad \forall L \in L_{h}, \quad  \forall h \in H 
\end{alignat*}
In the following, we will write out the formulation in detail for the classical case $d=2$ and $d=3$. 

\subsubsection{The $(n,2)$-Queens Problem}
$x_{i,j}\in \{\,0, 1 \,\}$ denotes a queen at position $(i,j)$. For $n \geq 4$:\\
\begin{alignat*}{3}
  & \text{max } &       & \sum_{i=1, j=1}^{n} x_{i,j} \\[2ex]
  & \text{s.t } & \quad &\; \sum_{i=1}^{n} x_{i,j} = 1,  && \forall j =1, \dots, n\\
  &             &       &\; \sum_{j=1}^{n} x_{i,j} = 1,  && \forall i =1, \dots, n\\
  &             &       & \sum_{i+j=k}^{n} x_{i,j} \leq 1,&& \forall i,j=1, \dots, n \text{ and } 2 \leq k \leq 2n\\
  &             &       & \sum_{i-j=k}^{n} x_{i,j} \leq 1,&& \forall i,j=1, \dots, n \text{ and } -n+1 \leq k \leq n-1\\[2ex]
  &             &       & x_{i,j} \in \{0,1\},            &\quad \quad \quad& \forall i,j =1 ,\dots, n
\end{alignat*}

The first two equalities constrain rows and columns of the board; they correspond to the vectors $(1,0)$ and $(0,1)$. The third and fourth inequality correspond to the vectors $(1,-1)$  and $(1,1)$, the two diagonal hyperplanes of attack on the $(n,2)$-board.

It is important to note that the first two constraints are, in fact, equalities, as we know that solutions to the $(n,2)$-queens problem exist for all $n \geq 4$. 

\subsubsection{The Partial $(n,3)$-Queens Problem}
$x_{i,j,k}\in \{\,0, 1 \,\}$ denotes a queen at position $(i,j,k)$. For all $n$:\\
\begin{alignat*}{3}
  & \text{max } &       & \sum_{i=1, j=1, k=1}^{n} x_{i,j,k} \\[2ex]
  & \text{s.t } & \quad &\;\;\;\;\;\: \sum_{i=1}^{n} x_{i,j,k} \leq 1,  && \forall j,k =1, \dots, n\\
  &             &       &\;\;\;\;\;\: \sum_{j=1}^{n} x_{i,j,k} \leq 1,  && \forall i,k =1, \dots, n\\
  &             &       &\;\;\;\;\;\: \sum_{k=1}^{n} x_{i,j,k} \leq 1,  && \forall i,j =1, \dots, n\\
  &             &       &\;\;\;\; \sum_{i+j=m}^{n} x_{i,j,k} \leq 1,&& \forall i,j,k=1, \dots, n \text{ and } 2 \leq m \leq 2n\\
  &             &       &\;\;\;\; \sum_{i-j=m}^{n} x_{i,j,k} \leq 1,&& \forall i,j,k=1, \dots, n \text{ and } -n+1 \leq m \leq n-1\\
  &             &       &\;\;\;\; \sum_{i+k=m}^{n} x_{i,j,k} \leq 1,&& \forall i,j,k=1, \dots, n \text{ and } 2 \leq m \leq 2n\\
  &             &       &\;\;\;\; \sum_{i-k=m}^{n} x_{i,j,k} \leq 1,&& \forall i,j,k=1, \dots, n \text{ and } -n+1 \leq m \leq n-1\\
  &             &       &\;\;\;\; \sum_{j+k=m}^{n} x_{i,j,k} \leq 1,&& \forall i,j,k=1, \dots, n \text{ and } 2 \leq m \leq 2n\\
  &             &       &\;\;\;\; \sum_{j-k=m}^{n} x_{i,j,k} \leq 1,&& \forall i,j,k=1, \dots, n \text{ and } -n+1 \leq m \leq n-1\\
  &             &       & \sum_{i+j=m, j+k=p}^{n} x_{i,j,k} \leq 1,&& \text{* and } 2 \leq m \leq 2n, 2 \leq p \leq 2n\\
  &             &       & \sum_{i-j=m, j+k=p}^{n} x_{i,j,k} \leq 1,&& \text{* and } -n+1 \leq m \leq n-1, \leq p \leq 2n\\
  &             &       & \sum_{j+k=m, j-k=p}^{n} x_{i,j,k} \leq 1,&& \text{* and } 2 \leq m \leq 2n, -n+1 \leq p \leq n-1\\
  &             &       & \sum_{j-k=m, j-k=p}^{n} x_{i,j,k} \leq 1,&& \text{* and } -n+1 \leq m \leq n-1, -n+1 \leq p \leq n-1\\[2ex]
  &             &       &                                 &\quad \quad \quad& \text{*} \forall i,j,k, n \\
  &             &       & x_{i,j,k} \in \{0,1\},            &\quad \quad \quad& \forall i,j,k =1 ,\dots, n 
\end{alignat*}
\newpage
The first three inequalities correspond to the vectors $(1,0,0), (0,1,0), (0,0,1)$. Inequalities 4 to 9 correspond to all vectors with two non-zero entries, the diagonals. Inequalities 10 to 12 correspond to all vectors with three non-zero entries, called triagonals. It is intuitive to see how this concept scales for larger $d$, as the vectors of attack are characterized by their number of non-zero entries.

Note that in the particular case of the $(n,3)$-queens problem with $|Q_{max}(m,3)|=n^2$, the first three inequalities are equalities, as we fit exactly one queen in each row. This strengthens the IP formulation, if applicable.

\newpage
\subsection{Strengthening the IP}
We present a selection of valid inequalities for the partial $(n,d)$-queens problem. 
The inequalities strengthen the IP by restricting the LP relaxation and may be used to derive upper bounds. The trivial upper bound for the partial $(n,d)$-queens problem of $n^{d-1}$ is also the solution to the LP relaxation, for example, by placing $\frac{1}{n}$ queens on each square. The proposed valid inequalities significantly improve the LP and, thereby, the solving process.

Note that while well-defined for all $d$, some of the following inequalities improve the upper bound of maximal partial configurations, particularly for $d>2$.

\subsubsection{Subsolution Inequalities}
Recall from \ref{subsubsec_subsolution_ineq_gend}: $S^{n,d}_m$ denote the set of all subsets of the $(n,d)$-board that correspond to $(m,d)$-boards, then
\begin{align}
     \sum_{s\in S} x_s \leq  |Q_{max}(m,d)| \;\;\;\;\; \forall S \in S^{n,d}_m
\end{align}
We may directly translate these inequalities into additional constraints, obtaining $\sum_{m=1}^{n-1} (n-m+1)^d$ inequalities for given $n,d$. 

\subsubsection{Layer Inequalities}
For all layer subsets of the $(n,d)$-board, we may apply our knowledge of maximal partial solutions for $(n,d-1)$-boards. Let $L^{n,d}$ denote the set of all layers of the $(n,d)$-board, meaning all subsets that correspond to $(n,d-1)$-boards (recall \ref{subsubsection_board}). We obtain the following additional valid inequalities:
\begin{align}
     \sum_{s\in L} x_s \leq  |Q_{max}(n,d-1)| \;\;\;\;\; \forall L \in L^{n,d}
\end{align}
These inequalities may be applied recursively for layers of layers. For layers of dimension $2$ or less, they can be discarded due to theorem \ref{thm_pauls}.

\newpage
\subsubsection{Cube and Star Cliques}
\cite{fischetti2019finding} discuss a set of clique inequalities for the $(n,2)$-queens problem. These generalize nicely to higher dimensions. For $d=3$ and for integer $h$ odd and $i+h, j+h, k+h \leq n$, we get a clique in the shape of the corners of a cube.
\begin{align}
    & x_{i,j,k} + x_{i+h,j,k} + x_{i,j+h,k} + x_{i+h,j+h,k}\nonumber \\ + & x_{i,j,k+h} + x_{i+h,j,k+h} + x_{i,j+h,k+h} + x_{i+h,j+h,k+h} \leq 1
\end{align}
For even $h$ we further get
\begin{align}
    & x_{i,j,k} + x_{i+h,j,k} + x_{i,j+h,k} + x_{i+h,j+h,k} \nonumber \\ + & x_{i,j,k+h} + x_{i+h,j,k+h} + x_{i,j+h,k+h} + x_{i+h,j+h,k+h} \nonumber \\
    + & x_{i+\frac{h}{2},j+\frac{h}{2},k+\frac{h}{2}} \leq 1
\end{align}
\\
And for $i+h, j+h, k+h \leq n$ and $i-h, j-h, k-h \geq 1$, a third family of star-shaped cliques (corresponding to cliques in the shape of a cross-polytope in the respective dimension) is given by
\begin{align}
    & x_{i,j,k} + x_{i+h,j,k} + x_{i-h,j,k} + x_{i,j+h,k} + x_{i,j-h,k} \nonumber \\ + & x_{i,j,k+h} + x_{i,j,k-h} \leq 1
\end{align}

\begin{figure}[H]
\setkeys{Gin}{width=0.5\linewidth}
\includegraphics{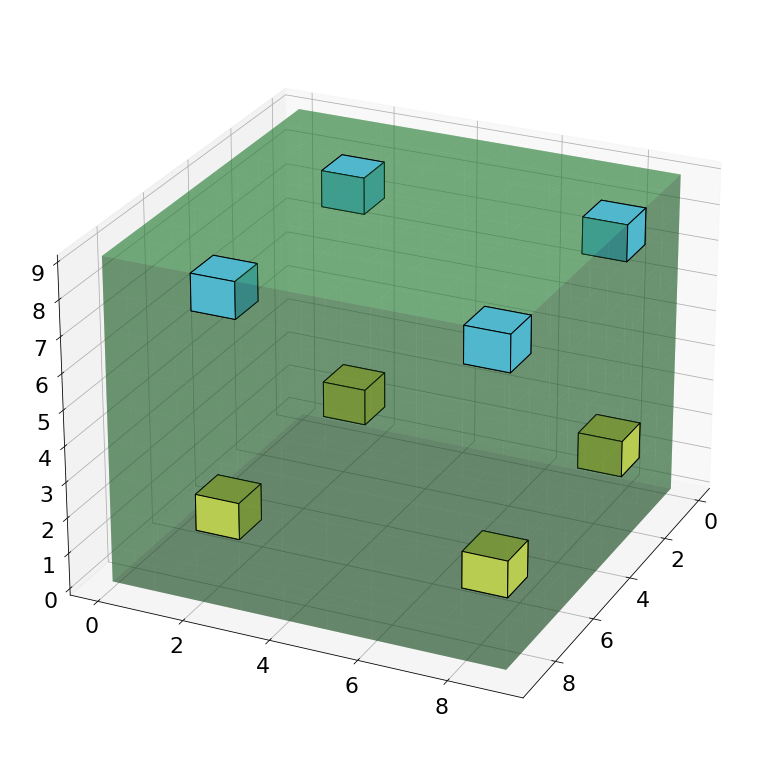}
\hfill
\includegraphics{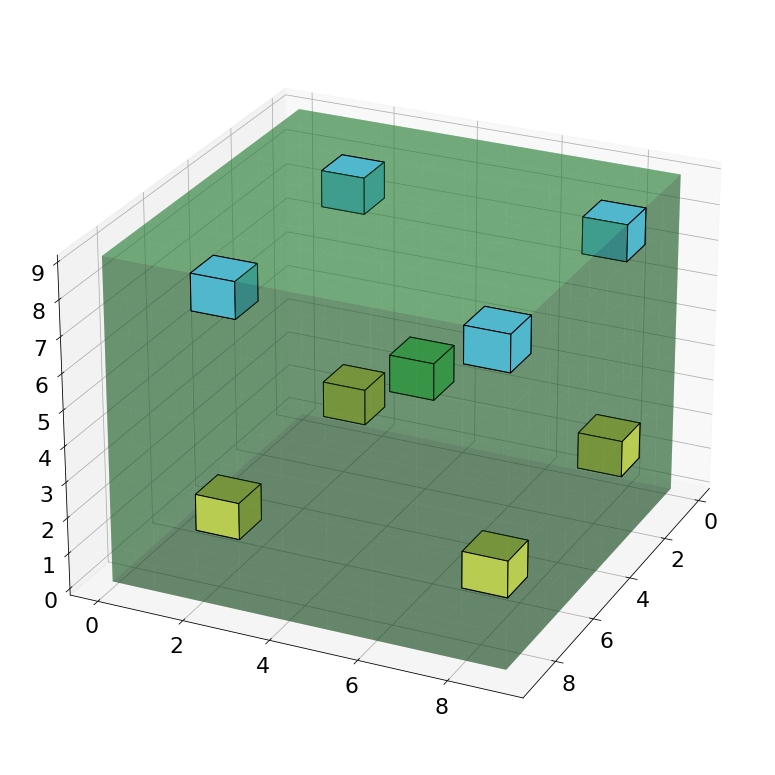}
\caption{Cube cliques for $h=6$ on the $(9,3)$-board}
\label{tab:vis_clique_12}
\end{figure}

\begin{figure}[H]
\begin{center}
    \setkeys{Gin}{width=0.5\linewidth}
\includegraphics{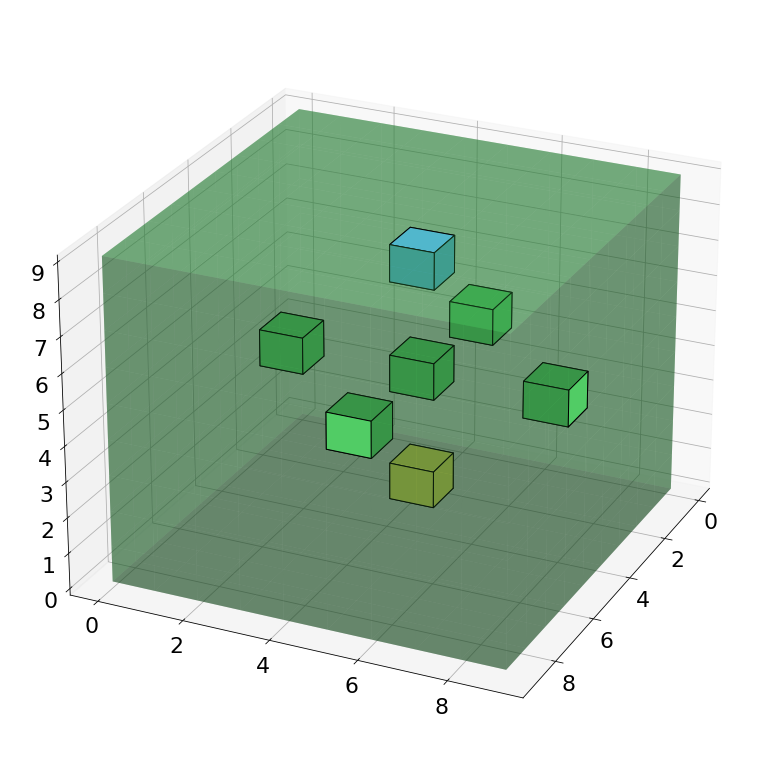}
\caption{Star clique for $h=3$ on the $(9,3)$-board}
\label{tab:vis_clique_3}
\end{center}
\end{figure}

For general $d$, we may describe the cube inequalities as follows. For integer $h$ odd and $s_i+h \leq n$ for all $i \leq d$
\begin{align}
    & \bigg( \sum_{a_1 =0}^1 \sum_{a_2 =0}^1 \dots \sum_{a_d =0}^1 x_{(s_1+a_1 \cdot h,s_2+a_2 \cdot h,...,s_d+a_d \cdot h)} \bigg) \leq 1
\end{align}
And further for even $h$ 
\begin{align}
    & \Bigg( \bigg( \sum_{a_1 =0}^1 \sum_{a_2 =0}^1 \dots \sum_{a_d =0}^1 x_{(s_1+a_1 \cdot h,s_2+a_2 \cdot h,...,s_d+a_d \cdot h)} \bigg) \\
    & + x_{(s_1+\cdot \frac{h}{2},s_2+ \cdot \frac{h}{2},...,s_d+ \cdot \frac{h}{2})} \Bigg) \leq 1
\end{align}
For the star cliques we get for for $s_i+h \leq n$ and $s_i-h \geq n$ for all $i \leq d$
\begin{align}
    & x_{(s_1,s_2,...,s_d)} \nonumber \\
    & + x_{(s_1+h,s_2,...,s_d)} + x_{(s_1-h,s_2,...,s_d)} \nonumber \\
    & + x_{(s_1,s_2+h,...,s_d)} + x_{(s_1,s_2-h,...,s_d)} \nonumber \\
    & + ... \nonumber \\
    & + x_{(s_1,...,s_i+h,...,s_d)} + x_{(s_1,...,s_i-h,...,s_d)} \nonumber \\
    & + ... \nonumber \\
    & + x_{(s_1,s_2,...,s_d+h)} + x_{(s_1,s_2,...,s_d-h)} \leq 1
\end{align}
\\
There are $\sum_{m=2}^n (n-m+1)^d$ cube cliques and $\sum_{m=(2k+1)}^n (n-m+1)^d$ star cliques. The family of cliques is polynomial in $n$ and exponential in $d$.
\newpage

\subsubsection{Odd Cycle Inequalities}
Given any cycle of nodes $O$ on the $(n,d)$-queen graph, it holds that:
\begin{align}
    \sum_{s \in O} x_s \leq \frac{|O|}{2}
\end{align}
In particular for odd cycles, $|O|$ odd, we get:
\begin{align}
    \sum_{s \in O} x_s \leq \frac{|O|-1}{2}
\end{align}
Odd cycles of $|O|=3$ are all contained in the previously described clique inequalities. Implementing odd cycle inequalities with the goal of decreasing computational time, it may be best to consider larger cycles of queens that only attack their respective neighbors in the cycle.

\newpage

\section{Computational Results} \label{sec_comp_results}
\subsection{The Partial $(n,3)$-Queens Problem}
All instances were solved using Gurobi version 9.5.2 with 10 randomly generated seeds on a single Intel Xeon Gold 6246 with 24 cores, 48 threads, and 384GB RAM. We compare the following different variants of the IP regarding runtime, gurobi work\footnote{Gurobi work is a deterministic measure that roughly corresponds to one second of computational time on a single thread.} and nodes:
\begin{itemize}
    \item \textbf{Base Model} \\
    This IP formulation only contains the necessary inequalities as laid out in section \ref{sectionip_formulation}.
    \item \textbf{LMR} \\
    The IP formulation by \cite{langlois2022complexity}, see \cite{Langlois2023}. 
    \item \textbf{Cube Cliques} \\
    Base model with the additional cube clique inequalities, that correspond to cliques in the shape of a hypercube in the respective dimension.
    \item \textbf{Star Cliques} \\
    Base model with the additional star clique inequalities, that correspond to cliques in the shape of a cross-polytope in the respective dimension.
    \item \textbf{Cube and Star Cliques} \\
    Base model with the additional cube and star clique inequalities
    \item \textbf{Infeasibility for $|Q_{max}|+1$} \\
    This formulation adds the additional constraint to set the sum over all squares to be $|Q_{max}|+1$. Provided we know a certificate that we believe to be maximal, this allows us to prove that it is indeed a maximal partial solution. Additional variants add the clique as mentioned above inequalities.
    \item \textbf{Warmstarts} \\
    The solver is warmstarted with a certificate of a maximal partial solution. Recall that we may construct candidates for such certificates with the mentioned heuristic methods. Additional variants add the clique as mentioned above inequalities. As all previous variants solve $n=1,2,3$ in the root node and under one second, the comparison to warmstarts is ignored for $n<4$.
\end{itemize}

Runtime is given in seconds, work in gurobi work units. 

Additionally, we list metrics for larger instances that exceeded runtimes feasible for the above setup and were run on a PowerEdge C6520 with two Intel Xeon Gold 6338 with 64 cores, 128 threads, and 1TB RAM in total. 

\newpage
\begin{landscape}
\subsubsection{Base Model} 
\begin{table}[H]
\noindent
\begin{tabular}{|rr|rrr|rrr|rrr|}
    \hline 
    $n$ & $|Q_{max}|$& avg. time & \hphantom{avg. }min & \hphantom{avg. }max & avg. work & \hphantom{avg. }min & \hphantom{avg. }max & avg. nodes & \hphantom{avg. }min & \hphantom{avg. }max\\ \hline%\hline
    1 &  1 & $0.00$ &$0.00$&$0.00$  & 0.00 &$0.00$&$0.00$ &  0.0 & 0 & 0\\ %\hline
    2 &  1 & $0.00$ &$0.00$&$0.00$  & 0.00 &$0.00$&$0.00$ &  0.0 & 0 & 0\\ %\hline
    3 &  4 & $0.00$ &$0.00$&$0.00$  & 0.00 &$0.00$&$0.00$ &  1.0 & 1 & 1\\ %\hline
    4 &  7 & $1.03$ &$0.40$&$1.42$  & 0.05 &$0.03$&$0.07$ &   7.1 & 4 & 12\\ %\hline
    5 & 13 & $3.66$ &$2.51$&$5.42$  & 0.53 &$0.41$&$0.72$ &  728.5 & 630 & 886\\ %\hline
    6 & 21 &$135.71$&$63.11$&$155.28$  & 22.66 &$10.32$&$25.30$ & 41885.7& 26876 & 47010 \\ %\hline
    7 & 32 &$3351.14$&$2830.59$&$3612.67$  & 623.63 &$524.40$&$664.00$ &616683.6&  509878 & 686108 \\ %\hline
    8 & 48 & \hphantom{0000.00}-&$>36000.00$& \hphantom{0000.00}-&\hphantom{0000.00}-&\hphantom{0000.00}-&\hphantom{0000.00}- &\hphantom{0000.00}-& \hphantom{0000.00}- & \hphantom{0000.00}-\\ %\hline
    9 & 67 & \hphantom{0000.00}-&$>36000.00$& \hphantom{0000.00}-&\hphantom{0000.00}-&\hphantom{0000.00}-&\hphantom{0000.00}- &\hphantom{0000.00}-& \hphantom{0000.00}- & \hphantom{0000.00}-\\ %\hline
    10 & 91 & \hphantom{0000.00}-&$>36000.00$& \hphantom{0000.00}-&\hphantom{0000.00}-&\hphantom{0000.00}-&\hphantom{0000.00}- &\hphantom{0000.00}-& \hphantom{0000.00}- & \hphantom{0000.00}-\\ %\hline
    11 & 121 & $894.79$&$41.78$& $1469.94$&$159.29$&$16.96$ &$232.51$& $663.1 $ & $1$&$2590$\\ %\hline
    12 & 133 & \hphantom{0000.00}-&$>36000.00$& \hphantom{0000.00}-&\hphantom{0000.00}-&\hphantom{0000.00}-&\hphantom{0000.00}- &\hphantom{0000.00}-& \hphantom{0000.00}- & \hphantom{0000.00}-\\ %\hline
    13 & 169 & \hphantom{0000.00}-&$>36000.00$& \hphantom{0000.00}-&\hphantom{0000.00}-&\hphantom{0000.00}-&\hphantom{0000.00}- &\hphantom{0000.00}-& \hphantom{0000.00}- & \hphantom{0000.00}-\\ %\hline
    14 & $\geq 172$ & \hphantom{0000.00}-&$>36000.00$& \hphantom{0000.00}-&\hphantom{0000.00}-&\hphantom{0000.00}-&\hphantom{0000.00}- &\hphantom{0000.00}-& \hphantom{0000.00}- & \hphantom{0000.00}-\\ \hline
\end{tabular}
\caption{\label{table-d3-baseIP}Base IP for $(n,3)$}
\end{table}

\subsubsection{LMR}
\begin{table}[H]
\noindent
\begin{tabular}{|rr|rrr|rrr|rrr|}
    \hline 
    $n$ & $|Q_{max}|$& avg. time & \hphantom{avg. }min & \hphantom{avg. }max & avg. work & \hphantom{avg. }min & \hphantom{avg. }max & avg. nodes & \hphantom{avg. }min & \hphantom{avg. }max\\ \hline%\hline
    1 &  1 & $0.00$ &$0.00$&$0.00$  & 0.00 &$0.00$&$0.00$ &  0.0 & 0 & 0\\ %\hline
    2 &  1 & $0.00$ &$0.00$&$0.00$  & 0.00 &$0.00$&$0.00$ &  0.0 & 0 & 0\\ %\hline
    3 &  4 & $0.00$ &$0.00$&$0.00$  & 0.00 &$0.00$&$0.00$ &  1.0 & 1 & 1\\ %\hline
    4 &  7 & $1.20$ &$1.09$&$1.27$  & $0.04$ &$0.04$&$0.04$ &   $10.9$ & $7$ & $13$\\ %\hline
    5 & 13 & $3.25$ &$2.50$&$3.87$  & $0.42$ &$0.34$&$0.51$ &  $767.2$ & $698$ & $822$\\ %\hline
    6 & 21 &$101.12$&$73.44$&$183.16$  & $14.41$ &$10.25$&$28.24$ & $31556.6$& $23033$ & $62215$\\ %\hline
    7 & 32 &$6225.06$&$5529.80$&$7516.79$  & $913.89$ &$843.55$&$1084.02$ &$870128.1$&  $774062$ & $1076920$\\ %\hline
    8 & 48 & \hphantom{0000.00}-&$>36000.00$& \hphantom{0000.00}-&\hphantom{0000.00}-&\hphantom{0000.00}-&\hphantom{0000.00}- &\hphantom{0000.00}-& \hphantom{0000.00}- & \hphantom{0000.00}-\\ %\hline
    9 & 67 & \hphantom{0000.00}-&$>36000.00$& \hphantom{0000.00}-&\hphantom{0000.00}-&\hphantom{0000.00}-&\hphantom{0000.00}- &\hphantom{0000.00}-& \hphantom{0000.00}- & \hphantom{0000.00}-\\ %\hline
    10 & 91 & \hphantom{0000.00}-&$>36000.00$& \hphantom{0000.00}-&\hphantom{0000.00}-&\hphantom{0000.00}-&\hphantom{0000.00}- &\hphantom{0000.00}-& \hphantom{0000.00}- & \hphantom{0000.00}-\\ %\hline
    11 & 121 & $2029.97$&$399.52$& $3819.09$&$377.67$&$108.57$ &$560.04$& $659.3$ & $1$&$2520$\\ %\hline
    12 & 133 & \hphantom{0000.00}-&$>36000.00$& \hphantom{0000.00}-&\hphantom{0000.00}-&\hphantom{0000.00}-&\hphantom{0000.00}- &\hphantom{0000.00}-& \hphantom{0000.00}- & \hphantom{0000.00}-\\ %\hline
    13 & 169 & \hphantom{0000.00}-&$9656.33$& \hphantom{0000.00}-&\hphantom{0000.00}-&$1705.20$&\hphantom{0000.00}- &\hphantom{0000.00}-& $2420$ & \hphantom{0000.00}-\\ %\hline
    14 & $\geq 172$ & \hphantom{0000.00}-&$>36000.00$& \hphantom{0000.00}-&\hphantom{0000.00}-&\hphantom{0000.00}-&\hphantom{0000.00}- &\hphantom{0000.00}-& \hphantom{0000.00}- & \hphantom{0000.00}-\\ \hline
\end{tabular}
\caption{\label{table-d3-LMR}\cite{Langlois2023} for $(n,3)$}
\end{table}

\subsubsection{Cube Cliques} 
\begin{table}[H]
\noindent
\begin{tabular}{|rr|rrr|rrr|rrr|}
    \hline 
    $n$ & $|Q_{max}|$& avg. time & \hphantom{avg. }min & \hphantom{avg. }max & avg. work & \hphantom{avg. }min & \hphantom{avg. }max & avg. nodes & \hphantom{avg. }min & \hphantom{avg. }max\\ \hline%\hline
    1 &  1 & $0.00$ &$0.00$&$0.00$  & 0.00 &$0.00$&$0.00$ &  0.0 & 0 & 0\\ %\hline
    2 &  1 & $0.00$ &$0.00$&$0.00$  & 0.00 &$0.00$&$0.00$ &  0.0 & 0 & 0\\ %\hline
    3 &  4 & $0.00$ &$0.00$&$0.00$  & 0.00 &$0.00$&$0.00$ &  1.0 & 1 & 1\\ %\hline
    4 &  7 & $0.81$ &$0.73$&$1.16$  & 0.02 &$0.02$&$0.05$ &   6.2 & 4 & 11\\ %\hline
    5 & 13 & $1.61$ &$1.37$&$1.88$  & 0.34 &$0.29$&$0.42$ &  37.0 & 25 & 49\\ %\hline
    6 & 21 &$15.44 $&$9.38$&$20.74$  & 2.98 &$3.64$&$2.06$ & 1091.6& 678 & 1688\\ %\hline
    7 & 32 &$574.01 $&$346.55$&$1063.55$  & 138.91 &$77.66$&$308.37$ &62032.8&  27419 & 106960\\ %\hline
    8 & 48 &$11343.43$ &$4532.57$&$20103.97$  & $2029.25$ &$918.91$&$3855.79$ &$519285.6$& $166974$ & $1090404$\\ 
    9 & 67 & \hphantom{0000.00}-&$>36000.00$& \hphantom{0000.00}-&\hphantom{0000.00}-&\hphantom{0000.00}-&\hphantom{0000.00}- &\hphantom{0000.00}-& \hphantom{0000.00}- & \hphantom{0000.00}-\\ %\hline
    10 & 91 & \hphantom{0000.00}-&$>36000.00$& \hphantom{0000.00}-&\hphantom{0000.00}-&\hphantom{0000.00}-&\hphantom{0000.00}- &\hphantom{0000.00}-& \hphantom{0000.00}- & \hphantom{0000.00}-\\ %\hline
    11 & 121 & $ 1164.61$&$158.65$& $2159.88$&$277.87$&$63.24$ &$416.13$& $169.0$ & $1$&$564$\\ %\hline
    12 & 133 & \hphantom{0000.00}-&$>36000.00$& \hphantom{0000.00}-&\hphantom{0000.00}-&\hphantom{0000.00}-&\hphantom{0000.00}- &\hphantom{0000.00}-& \hphantom{0000.00}- & \hphantom{0000.00}-\\ %\hline
    13 & 169 & \hphantom{0000.00}-&$>36000.00$& \hphantom{0000.00}-&\hphantom{0000.00}-&\hphantom{0000.00}-&\hphantom{0000.00}- &\hphantom{0000.00}-& \hphantom{0000.00}- & \hphantom{0000.00}-\\ %\hline
    14 & $\geq 172$ & \hphantom{0000.00}-&$>36000.00$& \hphantom{0000.00}-&\hphantom{0000.00}-&\hphantom{0000.00}-&\hphantom{0000.00}- &\hphantom{0000.00}-& \hphantom{0000.00}- & \hphantom{0000.00}-\\ \hline
\end{tabular}
\caption{\label{table-d3-CubeCliques}IP with cube cliques for $(n,3)$}
\end{table}

\subsubsection{Star Cliques} 
\begin{table}[H]
\noindent
\begin{tabular}{|rr|rrr|rrr|rrr|}
    \hline 
    $n$ & $|Q_{max}|$& avg. time & \hphantom{avg. }min & \hphantom{avg. }max & avg. work & \hphantom{avg. }min & \hphantom{avg. }max & avg. nodes & \hphantom{avg. }min & \hphantom{avg. }max\\ \hline%\hline
    1 &  1 & $0.00$ &$0.00$&$0.00$  & 0.00 &$0.00$&$0.00$ &  0.0 & 0 & 0\\ %\hline
    2 &  1 & $0.00$ &$0.00$&$0.00$  & 0.00 &$0.00$&$0.00$ &  0.0 & 0 & 0\\ %\hline
    3 &  4 & $0.00$ &$0.00$&$0.00$  & 0.00 &$0.00$&$0.00$ &  1.0 & 1 & 1\\ %\hline
    4 &  7 & $0.76$ &$0.67$&$0.85$  & $0.02$ &$0.02$&$0.02$ &   6.6 & 5 & 9\\ %\hline
    5 & 13 & $3.48$ &$2.47$&$6.25$  & $0.50$&$0.36$&$0.84$ &  $675.2$ & $621$ & $750$\\ %\hline
    6 & 21 &$163.24 $&$149.44$&$176.04$  & $26.68$ &$22.78$&$30.11$ & $41836.9$& $37195$ & $46049$\\ %\hline
    7 & 32 &$3714.34$&$3198.91$&$4064.65$  & $667.38$ &$576.75$&$721.11$ &$647698.5$&  $535591$ & $614444$\\ %\hline
    8 & 48 & \hphantom{0000.00}-&$>36000.00$& \hphantom{0000.00}-&\hphantom{0000.00}-&\hphantom{0000.00}-&\hphantom{0000.00}- &\hphantom{0000.00}-& \hphantom{0000.00}- & \hphantom{0000.00}-\\ %\hline
    9 & 67 & \hphantom{0000.00}-&$>36000.00$& \hphantom{0000.00}-&\hphantom{0000.00}-&\hphantom{0000.00}-&\hphantom{0000.00}- &\hphantom{0000.00}-& \hphantom{0000.00}- & \hphantom{0000.00}-\\ %\hline
    10 & 91 & \hphantom{0000.00}-&$>36000.00$& \hphantom{0000.00}-&\hphantom{0000.00}-&\hphantom{0000.00}-&\hphantom{0000.00}- &\hphantom{0000.00}-& \hphantom{0000.00}- & \hphantom{0000.00}-\\ %\hline
    11 & 121 & $1246.48$&$62.50$& $3011.27$&$295.68$&$30.98$ &$614.68$& $800.7$ & $1$&$2401$\\ %\hline
    12 & 133 & \hphantom{0000.00}-&$>36000.00$& \hphantom{0000.00}-&\hphantom{0000.00}-&\hphantom{0000.00}-&\hphantom{0000.00}- &\hphantom{0000.00}-& \hphantom{0000.00}- & \hphantom{0000.00}-\\ %\hline
    13 & 169 & \hphantom{0000.00}-&$>36000.00$& \hphantom{0000.00}-&\hphantom{0000.00}-&\hphantom{0000.00}-&\hphantom{0000.00}- &\hphantom{0000.00}-& \hphantom{0000.00}- & \hphantom{0000.00}-\\ %\hline
    14 & $\geq 172$ & \hphantom{0000.00}-&$>36000.00$& \hphantom{0000.00}-&\hphantom{0000.00}-&\hphantom{0000.00}-&\hphantom{0000.00}- &\hphantom{0000.00}-& \hphantom{0000.00}- & \hphantom{0000.00}-\\ \hline
\end{tabular}
\caption{\label{table-d3-StarCliques}IP with star cliques for $(n,3)$}
\end{table}

\subsubsection{Cube and Star Cliques}
\begin{table}[H]
\noindent
\begin{tabular}{|rr|rrr|rrr|rrr|}
    \hline 
    $n$ & $|Q_{max}|$& avg. time & \hphantom{avg. }min & \hphantom{avg. }max & avg. work & \hphantom{avg. }min & \hphantom{avg. }max & avg. nodes & \hphantom{avg. }min & \hphantom{avg. }max\\ \hline%\hline
    1 &  1 & $0.00$ &$0.00$&$0.00$  & 0.00 &$0.00$&$0.00$ &  0.0 & 0 & 0\\ %\hline
    2 &  1 & $0.00$ &$0.00$&$0.00$  & 0.00 &$0.00$&$0.00$ &  0.0 & 0 & 0\\ %\hline
    3 &  4 & $0.00$ &$0.00$&$0.00$  & 0.00 &$0.00$&$0.00$  & 1.0 & 1 & 1\\ %\hline
    4 &  7 & $0.81$ &$0.78$&$0.85$  & $0.03$ &$0.02$&$0.03$ &   6.2 & 4 & 9\\ %\hline
    5 & 13 & $1.58$ &$1.36$&$1.94$  & $0.35$&$0.25$&$0.45$ &  $30.8$ & $25$ & $36$\\ %\hline
    6 & 21 &$16.43$&$10.41$&$23.48$  & $3.18$ &$2.23$&$4.24$ & $973.2$& $775$ & $1259$\\ %\hline
    7 & 32 &$531.74$&$399.99$&$826.12$  & $125.17$ &$85.50$&$185.09$ &$43436.3$&  $20449$ & $85804$\\ %\hline
    8 & 48 &$11745.36$ &$4192.13$&$18865.56$  & $2124.57$ &$838.47$&$3483.81$ &$449242.0$& $136496$ & $660154$\\ 
    9 & 67 & \hphantom{0000.00}-&$>36000.00$& \hphantom{0000.00}-&\hphantom{0000.00}-&\hphantom{0000.00}-&\hphantom{0000.00}- &\hphantom{0000.00}-& \hphantom{0000.00}- & \hphantom{0000.00}-\\ %\hline
    10 & 91 & \hphantom{0000.00}-&$>36000.00$& \hphantom{0000.00}-&\hphantom{0000.00}-&\hphantom{0000.00}-&\hphantom{0000.00}- &\hphantom{0000.00}-& \hphantom{0000.00}- & \hphantom{0000.00}-\\ %\hline
    11 & 121 & $2001.73$&$318.61$& $4217.57$&$401.27$&$124.33$ &$614.68$& $515.3$ & $1$&$1591$\\ %\hline
    12 & 133 & \hphantom{0000.00}-&$>36000.00$& \hphantom{0000.00}-&\hphantom{0000.00}-&\hphantom{0000.00}-&\hphantom{0000.00}- &\hphantom{0000.00}-& \hphantom{0000.00}- & \hphantom{0000.00}-\\ %\hline
    13 & 169 & \hphantom{0000.00}-&$>36000.00$& \hphantom{0000.00}-&\hphantom{0000.00}-&\hphantom{0000.00}-&\hphantom{0000.00}- &\hphantom{0000.00}-& \hphantom{0000.00}- & \hphantom{0000.00}-\\ %\hline
    14 & $\geq 172$ & \hphantom{0000.00}-&$>36000.00$& \hphantom{0000.00}-&\hphantom{0000.00}-&\hphantom{0000.00}-&\hphantom{0000.00}- &\hphantom{0000.00}-& \hphantom{0000.00}- & \hphantom{0000.00}-\\ \hline
\end{tabular}
\caption{\label{table-d3-CubeStarCliques}IP with cube and star cliques for $(n,3)$}
\end{table}

\subsubsection{Infeasibility for $|Q_{max}|+1$}
\begin{table}[H]
\noindent
\begin{tabular}{|rr|rrr|rrr|rrr|}
    \hline 
    $n$ & $|Q_{max}|$& avg. time & \hphantom{avg. }min & \hphantom{avg. }max & avg. work & \hphantom{avg. }min & \hphantom{avg. }max & avg. nodes & \hphantom{avg. }min & \hphantom{avg. }max\\ \hline%\hline
    1 &  1 & $0.00$ &$0.00$&$0.00$  & 0.00 &$0.00$&$0.00$ &  $0.0$ & $0$ & $0$\\ %\hline
    2 &  1 & $0.00$ &$0.00$&$0.00$  & 0.00 &$0.00$&$0.00$ &  $0.0$ & $0$ & $0$\\ %\hline
    3 &  4 & $0.00$ &$0.00$&$0.00$  & 0.00 &$0.00$&$0.00$ &  $0.0$ & $0$ & $0$\\ %\hline
    4 &  7 & $0.03$ &$0.02$&$0.09$  & $0.01$ &$0.01$&$0.02$ &   $1$ & $1$ & $1$\\ %\hline
    5 & 13 & $2.48$ &$2.35$&$2.59$  & $0.32$&$0.29$&$0.35$ &  $1236.5$ & $1003$ & $1493$\\ %\hline
    6 & 21 &$349.06$&$245.60$&$486.80$  & $41.77$ &$29.52$&$58.22$ & $254049.9$& $148023$ & $402077$\\ %\hline
    7 & 32 & \hphantom{0000.00}-&$>36000.00$& \hphantom{0000.00}-&\hphantom{0000.00}-&\hphantom{0000.00}-&\hphantom{0000.00}- &\hphantom{0000.00}-& \hphantom{0000.00}- & \hphantom{0000.00}-\\ %\hline
    8 & 48 & \hphantom{0000.00}-&$>36000.00$& \hphantom{0000.00}-&\hphantom{0000.00}-&\hphantom{0000.00}-&\hphantom{0000.00}- &\hphantom{0000.00}-& \hphantom{0000.00}- & \hphantom{0000.00}-\\ %\hline
    9 & 67 & \hphantom{0000.00}-&$>36000.00$& \hphantom{0000.00}-&\hphantom{0000.00}-&\hphantom{0000.00}-&\hphantom{0000.00}- &\hphantom{0000.00}-& \hphantom{0000.00}- & \hphantom{0000.00}-\\ %\hline
    10 & 91 & \hphantom{0000.00}-&$>36000.00$& \hphantom{0000.00}-&\hphantom{0000.00}-&\hphantom{0000.00}-&\hphantom{0000.00}- &\hphantom{0000.00}-& \hphantom{0000.00}- & \hphantom{0000.00}-\\ %\hline
    11 & 121 & $0.00$&$0.00$& $0.01$&$0.01$ &$0.01$& $0.01$ &$0.0$& $0$&$0$\\ %\hline
    12 & 133 & \hphantom{0000.00}-&$>36000.00$& \hphantom{0000.00}-&\hphantom{0000.00}-&\hphantom{0000.00}-&\hphantom{0000.00}- &\hphantom{0000.00}-& \hphantom{0000.00}- & \hphantom{0000.00}-\\ %\hline
    13 & 169 & $0.01$&$0.01$& $0.01$&$0.01$ &$0.01$& $0.01$ &$0.0$& $0$&$0$\\ %\hline
    14 & $\geq 172$ & \hphantom{0000.00}-&$>36000.00$& \hphantom{0000.00}-&\hphantom{0000.00}-&\hphantom{0000.00}-&\hphantom{0000.00}- &\hphantom{0000.00}-& \hphantom{0000.00}- & \hphantom{0000.00}-\\ \hline
\end{tabular}
\caption{\label{table-d3-Infeasibility}Proving infeasibility for $k+1$, Base IP for $(n,3)$}
\end{table}

\subsubsection{Infeasibility for $|Q_{max}|+1$, Cube Cliques}
\begin{table}[H]
\noindent
\begin{tabular}{|rr|rrr|rrr|rrr|}
    \hline 
    $n$ & $|Q_{max}|$& avg. time & \hphantom{avg. }min & \hphantom{avg. }max & avg. work & \hphantom{avg. }min & \hphantom{avg. }max & avg. nodes & \hphantom{avg. }min & \hphantom{avg. }max\\ \hline%\hline
    1 &  1 & $0.00$ &$0.00$&$0.00$  & 0.00 &$0.00$&$0.00$ &  0.0 & 0 & 0\\ %\hline
    2 &  1 & $0.00$ &$0.00$&$0.00$  & 0.00 &$0.00$&$0.00$ &  0.0 & 0 & 0\\ %\hline
    3 &  4 & $0.00$ &$0.00$&$0.00$  & 0.00 &$0.00$&$0.00$ &  0.0 & 0 & 0\\ %\hline
    4 &  7 & $0.52$ &$0.02$&$1.01$  & $0.02$ &$0.01$&$0.04$ &   $12.6$ & $1$ & $95$\\ %\hline
    5 & 13 & $1.40$ &$1.21$&$1.73$  & $0.26$&$0.22$&$0.39$ &  $42.6$ & $27$ & $63$\\ %\hline
    6 & 21 &$6.26$&$5.50$&$8.82$  & $1.34$ &$1.21$&$1.62$ & $1051.8$& $687$ & $2233$\\ %\hline
    7 & 32 &$168.31$&$116.05$&$403.87$  & $24.86$ &$17.14$&$60.85$ &$20909.3$&  $14391$ & $45627$\\ %\hline
    8 & 48 &$4559.43$ &$1730.25$&$10204.63$  & $714.30$ &$290.54$&$1548.51$ &$162308.4$& $60534$ & $386932$\\ %\hline
    9 & 67 & \hphantom{0000.00}-&$>36000.00$& \hphantom{0000.00}-&\hphantom{0000.00}-&\hphantom{0000.00}-&\hphantom{0000.00}- &\hphantom{0000.00}-& \hphantom{0000.00}- & \hphantom{0000.00}-\\ %\hline
    10 & 91 & \hphantom{0000.00}-&$>36000.00$& \hphantom{0000.00}-&\hphantom{0000.00}-&\hphantom{0000.00}-&\hphantom{0000.00}- &\hphantom{0000.00}-& \hphantom{0000.00}- & \hphantom{0000.00}-\\ %\hline
    11 & 121 & $0.02$&$0.02$& $0.02$&$0.02$ &$0.02$& $0.02$ &$0.0$& $0$&$0$\\ %\hline
    12 & 133 & \hphantom{0000.00}-&$>36000.00$& \hphantom{0000.00}-&\hphantom{0000.00}-&\hphantom{0000.00}-&\hphantom{0000.00}- &\hphantom{0000.00}-& \hphantom{0000.00}- & \hphantom{0000.00}-\\ %\hline
    13 & 169 & $0.04$&$0.04$& $0.04$&$0.03$&$0.03$& $0.03$ &$0.0$& $0$&$0$\\ %\hline
    14 & $\geq 172$ & \hphantom{0000.00}-&$>36000.00$& \hphantom{0000.00}-&\hphantom{0000.00}-&\hphantom{0000.00}-&\hphantom{0000.00}- &\hphantom{0000.00}-& \hphantom{0000.00}- & \hphantom{0000.00}-\\ \hline
\end{tabular}
\caption{\label{table-d3-Infeasibility-Cube}Proving infeasibility for $k+1$, IP with cube cliques for $(n,3)$}
\end{table}

\subsubsection{Infeasibility for $|Q_{max}|+1$, Cube and Star Cliques}
\begin{table}[H]
\noindent
\begin{tabular}{|rr|rrr|rrr|rrr|}
    \hline 
    $n$ & $|Q_{max}|$& avg. time & \hphantom{avg. }min & \hphantom{avg. }max & avg. work & \hphantom{avg. }min & \hphantom{avg. }max & avg. nodes & \hphantom{avg. }min & \hphantom{avg. }max\\ \hline%\hline
    1 &  1 & $0.00$ &$0.00$&$0.00$  & 0.00 &$0.00$&$0.00$ &  0.0 & 0 & 0\\ %\hline
    2 &  1 & $0.00$ &$0.00$&$0.00$  & 0.00 &$0.00$&$0.00$ &  0.0 & 0 & 0\\ %\hline
    3 &  4 & $0.00$ &$0.00$&$0.00$  & 0.00 &$0.00$&$0.00$ &  0.0 & 0 & 0\\ %\hline
    4 &  7 & $0.43$ &$0.02$&$0.95$  & $0.02$ &$0.01$&$0.03$ &   $2.4$ & $1$ & $6$\\ %\hline
    5 & 13 & $1.42$ &$1.28$&$1.60$  & $0.28$&$0.23$&$0.32$ &  $42.5$ & $35$ & $63$\\ %\hline
    6 & 21 &$6.52$&$5.56$&$10.41$  & $1.44$ &$1.28$&$2.07$ & $1009.9$& $741$ & $2003$\\ %\hline
    7 & 32 &$267.05$&$228.21$&$342.88$  & $43.11$ &$36.05$&$53.36$ &$27297.0$&  $21729$ & $39542$\\ %\hline
    8 & 48 &$4127.32$ &$1892.48$&$5808.65$  & $571.04$ &$317.82$&$760.54$ &$112814.0$& $57872$ & $160581$\\ %\hline
    9 & 67 & \hphantom{0000.00}-&$>36000.00$& \hphantom{0000.00}-&\hphantom{0000.00}-&\hphantom{0000.00}-&\hphantom{0000.00}- &\hphantom{0000.00}-& \hphantom{0000.00}- & \hphantom{0000.00}-\\ %\hline
    10 & 91 & \hphantom{0000.00}-&$>36000.00$& \hphantom{0000.00}-&\hphantom{0000.00}-&\hphantom{0000.00}-&\hphantom{0000.00}- &\hphantom{0000.00}-& \hphantom{0000.00}- & \hphantom{0000.00}-\\ %\hline
    11 & 121 & $0.02$&$0.02$& $0.02$&$0.02$ &$0.02$& $0.02$ &$0.0$& $0$&$0$\\ %\hline
    12 & 133 & \hphantom{0000.00}-&$>36000.00$& \hphantom{0000.00}-&\hphantom{0000.00}-&\hphantom{0000.00}-&\hphantom{0000.00}- &\hphantom{0000.00}-& \hphantom{0000.00}- & \hphantom{0000.00}-\\ %\hline
    13 & 169 & $0.06$&$0.05$& $0.06$&$0.04$ &$0.04$& $0.04$ &$0.0$& $0$&$0$\\ %\hline
    14 & $\geq 172$ & \hphantom{0000.00}-&$>36000.00$& \hphantom{0000.00}-&\hphantom{0000.00}-&\hphantom{0000.00}-&\hphantom{0000.00}- &\hphantom{0000.00}-& \hphantom{0000.00}- & \hphantom{0000.00}-\\ \hline
\end{tabular}
\caption{\label{table-d3-Infeasibility-CubeStar}Proving infeasibility for $k+1$, IP with cube and star cliques for $(n,3)$}
\end{table}

\subsubsection{Warmstarts with Base Model}
\begin{table}[H]
\noindent
\begin{tabular}{|rr|rrr|rrr|rrr|}
    \hline 
    $n$ & $|Q_{max}|$& avg. time & \hphantom{avg. }min & \hphantom{avg. }max & avg. work & \hphantom{avg. }min & \hphantom{avg. }max & avg. nodes & \hphantom{avg. }min & \hphantom{avg. }max\\ \hline%\hline
    4 &  7 &$1.08$ &$0.72$&$1.45$  & $0.05$ &$0.03$&$0.06$ &   $7.0$ & $4$ & $11$\\ %\hline
    5 & 13 &$4.30$ &$2.95$&$6.07$  & $0.56$&$0.41$&$0.78$ &  $759.0$ & $673$ & $871$\\ %\hline
    6 & 21 &$143.08$&$127.75$&$151.12$  & $22.72$ &$19.97$&$24.22$ & $44150.3$& $38559$ & $46817$\\ %\hline
    7 & 32 &$3839.39$&$3092.87$&$4389.22$  & $683.22$ &$571.67$&$7782.13$ &$666381.1$&  $518615$ & $765314$\\ %\hline
    8 & 48 & \hphantom{0000.00}-&$>36000.00$& \hphantom{0000.00}-&\hphantom{0000.00}-&\hphantom{0000.00}-&\hphantom{0000.00}- &\hphantom{0000.00}-& \hphantom{0000.00}- & \hphantom{0000.00}-\\ %\hline
    9 & 67 & \hphantom{0000.00}-&$>36000.00$& \hphantom{0000.00}-&\hphantom{0000.00}-&\hphantom{0000.00}-&\hphantom{0000.00}- &\hphantom{0000.00}-& \hphantom{0000.00}- & \hphantom{0000.00}-\\ %\hline
    10 & 91 & \hphantom{0000.00}-&$>36000.00$& \hphantom{0000.00}-&\hphantom{0000.00}-&\hphantom{0000.00}-&\hphantom{0000.00}- &\hphantom{0000.00}-& \hphantom{0000.00}- & \hphantom{0000.00}-\\ %\hline
    11 & 121 & $1.10$&$1.03$& $1.16$&$1.64$ &$1.55$& $1.73$ &$1.0$& $1$&$1$\\ %\hline
    12 & 133 & \hphantom{0000.00}-&$>36000.00$& \hphantom{0000.00}-&\hphantom{0000.00}-&\hphantom{0000.00}-&\hphantom{0000.00}- &\hphantom{0000.00}-& \hphantom{0000.00}- & \hphantom{0000.00}-\\ %\hline
    13 & 169 & $2.74$&$2.55$& $2.90$&$3.96$ &$3.65$& $4.22$ &$1.0$& $1$&$1$\\ %\hline
    14 & $\geq 172$ & \hphantom{0000.00}-&$>36000.00$& \hphantom{0000.00}-&\hphantom{0000.00}-&\hphantom{0000.00}-&\hphantom{0000.00}- &\hphantom{0000.00}-& \hphantom{0000.00}- & \hphantom{0000.00}-\\ \hline
\end{tabular}
\caption{\label{table-d3-warmstart-baseIP}Base IP with warmstarts for $(n,3)$}
\end{table}

\subsubsection{Warmstarts with Cube and Star Cliques} 
\begin{table}[H]
\noindent
\begin{tabular}{|rr|rrr|rrr|rrr|}
    \hline 
    $n$ & $|Q_{max}|$& avg. time & \hphantom{avg. }min & \hphantom{avg. }max & avg. work & \hphantom{avg. }min & \hphantom{avg. }max & avg. nodes & \hphantom{avg. }min & \hphantom{avg. }max\\ \hline%\hline
    4 &  7 & $0.81$ &$0.75$&$0.87$  & $0.03$ &$0.03$&$0.03$ &   $8.4$ & $7$ & $14$\\ %\hline
    5 & 13 &$1.77$ &$1.49$&$2.19$  & $0.37$&$0.30$&$0.49$ &  $32.4$ & $23$ & $37$\\ %\hline
    6 & 21 &$18.14$&$15.52$&$20.63$  & $3.48$ &$3.04$&$3.89$ & $1177.9$& $765$ & $1700$\\ %\hline
    7 & 32 &$498.39$&$ 401.58$&$603.80$  & $107.35$ &$79.57$&$144.34$ &$36634.9$&  $26126$ & $45966$\\ %\hline
    8 & 48 &$7324.733$ &$4433.61$&$9261.85$  & $1465.36$ &$931.14$&$1844.42$ &$280155.8$& $170989$ & $382164$\\ %\hline
    9 & 67 & \hphantom{0000.00}-&$>36000.00$& \hphantom{0000.00}-&\hphantom{0000.00}-&\hphantom{0000.00}-&\hphantom{0000.00}- &\hphantom{0000.00}-& \hphantom{0000.00}- & \hphantom{0000.00}-\\ %\hline
    10 & 91 & \hphantom{0000.00}-&$>36000.00$& \hphantom{0000.00}-&\hphantom{0000.00}-&\hphantom{0000.00}-&\hphantom{0000.00}- &\hphantom{0000.00}-& \hphantom{0000.00}- & \hphantom{0000.00}-\\ %\hline
    11 & 121 & $6.79$&$6.38$& $7.79$&$10.23$ &$9.58$&$11.658$ &$1.0$& $1$&$1$\\ %\hline
    12 & 133 & \hphantom{0000.00}-&$>36000.00$& \hphantom{0000.00}-&\hphantom{0000.00}-&\hphantom{0000.00}-&\hphantom{0000.00}- &\hphantom{0000.00}-& \hphantom{0000.00}- & \hphantom{0000.00}-\\ %\hline
    13 & 169 & $30.66$&$27.43$& $34.63$&$28.13$ &$26.45$& $29.93$ &$1.0$& $1$&$1$\\ %\hline
    14 & $\geq 172$ & \hphantom{0000.00}-&$>36000.00$& \hphantom{0000.00}-&\hphantom{0000.00}-&\hphantom{0000.00}-&\hphantom{0000.00}- &\hphantom{0000.00}-& \hphantom{0000.00}- & \hphantom{0000.00}-\\ \hline
\end{tabular}
\caption{\label{table-d3-warmstart-CubeStar}IP with cube and star cliques with warmstarts for $(n,3)$}
%\end{center}
\end{table}

\newpage
\subsubsection{Comparison of IP Formulations} 
\begin{table}[H]
\noindent
\begin{tabular}{|rr|rrrrrrrrr|rr|}
    \hline 
    $n$ & $|Q_{max}|$& Base   & LMR     & Cube     & Star    & C+S    & Inf    & Inf C+S & ws & ws C+S & Cube$^*$ & Inf C$^*$\\ \hline%\hline
    1 &  1           &$0.00$  &$0.00$   &$0.00$    &$0.00$   &$0.00$  &$0.00$  &$0.00$   & $-$ & $-$ &$0.00$ &$0.00$ \\ %\hline
    2 &  1           &$0.00$  &$0.00$   &$0.00$    &$0.00$   &$0.00$  &$0.00$  &$0.00$   & $-$ & $-$ &$0.00$ &$0.00$\\ %\hline
    3 &  4           &$0.00$  &$0.00$   &$0.00$    &$0.00$   &$0.00$  &$0.00$  &$0.00$   & $-$ & $-$ &$0.01$ &$0.00$\\ %\hline
    4 &  7           &$1.03$  &$1.20$   &$0.81$    &$0.76$   &$0.81$  &$0.03$  &$0.42$   & $1.08$ & $0.81$ &$0.77$ &$0.01$\\ %\hline
    5 & 13           &$3.66$  &$3.25$   &$1.61$    &$3.48$   &$1.58$  &$2.48$  &$1.42$   & $4.30$ & $1.77$ &$1.56$ &$0.26$\\ %\hline
    6 & 21           &$135.71$&$101.12$ &$15.44$   &$163.24$ &$16.43$ &$349.06$&$\textbf{6.52}$   & $143.08$ & $18.14$ &$10.76$ &$9.79$\\ %\hline
    7 & 32           &$3351.14$&$6225.06$&$574.01$  &$3714.34$&$531.74$&$-$     &$\textbf{267.05}$ & $3839.39$ & $498.39$ &$659.68$ &$175.23$\\ %\hline
    8 & 48           &$-$     &$-$      &$11343.43$&$-$      &$11745.36$  &$-$ &$\textbf{4127.32}$&$-$ & $7324.73$ &$8464.06$ &$2604.80$\\ %\hline
    9 & 67           &$-$     &$-$      &$-$       &$-$      &$-$     &$-$     &$-$      & $-$  &$-$   &$45481.27$ &$31525.97$\\ %\hline
   10 & 91           &$-$     &$-$      &$-$       &$-$      &$-$     &$-$     &$-$      & $-$  &$-$   &$36453.71$ &$87914.31$\\ %\hline
   11 &121           &$894.79$&$2029.97$&$1164.61$ &$1246.48$     &$2001.73$&$\textbf{0.00}$  &$0.02$   &$1.10$&$6.70$&$315.78$ &$0.01$\\ %\hline
   12 &133           &$-$     &$-$      &$-$       &$-$      &$-$     &$-$     &$-$      & $-$  &$-$   &$-$ &$-$\\ %\hline
   13 &169           &$-$     &$-$      &$-$       &$-$      &$-$     &$\textbf{0.01}$  &$0.06$   &$2.74$&$30.66$&$-$ &$0.05$\\ %\hline
   14 &$\geq172$     &$-$     &$-$      &$-$       &$-$      &$-$     &$-$     &$-$      & $-$  &$-$   &$-$ &$-$\\ \hline

\end{tabular}
\caption{\label{table-d3-runtimes}Runtime comparison for $(n,3)$}
\end{table}
The fastest variant is marked bold for instances where the base model exceeds $10s$ runtime and excludes the singular runs* on Intel Xeon Gold 6338. We note that warmstarts with suitable certificates produce no significant speedup, as the computationally expensive part of the solving process remains to close the dual. This, however, can be significantly improved by introducing the discussed clique inequalities and by proving infeasibility for solutions of greater size than the known certificates instead.\\
For $n=11,13$, the infeasibility of a larger solution $|Q_{max}|+1$ is trivially solved as it already violates the LP relaxation. Further, the existence and construction of a solution of size $n²$ is known due to theorem \ref{Klarner} theorem \ref{thm_vanrees_general_d}. Computational results are still provided for completeness. However, they will be ignored when comparing the variants due to the trivial result. 

\newpage
\subsection{The Partial $(n,4)$-Queens Problem}
\subsubsection{Base Model} % R
\begin{table}[H]
\noindent
\begin{tabular}{|rr|rrr|rrr|rrr|}
    \hline 
    $n$ & $|Q_{max}|$& avg. time & \hphantom{avg. }min & \hphantom{avg. }max & avg. work & \hphantom{avg. }min & \hphantom{avg. }max & avg. nodes & \hphantom{avg. }min & \hphantom{avg. }max\\ \hline%\hline
    1 &  1 & $0.00$ &$0.00$&$0.00$  & $0.00$ &$0.00$&$0.00$ &$0.0$ & $0$ & $0$\\ %\hline
    2 &  1 & $0.00$ &$0.00$&$0.00$  & $0.00$ &$0.00$&$0.00$ &$0.0$ & $0$ & $0$\\ %\hline
    3 &  6 & $0.02$ &$0.02$&$0.02$  & $0.02$ &$0.02$&$0.02$ &$1.0$ & $1$ & $1$\\ %\hline
    4 & 16 & $0.80$ &$0.56$&$0.99$  & $0.41$ &$0.33$&$0.45$ &$1.0$ & $1$ & $1$\\ %\hline
    5 & 38 & $-$ &$>36000.00$&$-$  & $-$ &$-$&$-$ & $-$& $-$ & $-$\\ %\hline
    6 & 80 & $569.57$&$137.25$&$1437.64$  & $169.51$ &$98.72$&$289.15$ & $430.5$& $31$ & $2020$\\ %\hline
    7 &145 & $-$&$>36000.00$&$-$  & $-$ &$-$&$-$ & $-$& $-$ & $-$\\ \hline
\end{tabular}
\caption{\label{table-d4-baseIP}Base IP for $(n,4)$}
\end{table}

\vspace{-1mm}
\subsubsection{LMR} 
\begin{table}[H]
\noindent
\begin{tabular}{|rr|rrr|rrr|rrr|}
    \hline 
    $n$ & $|Q_{max}|$& avg. time & \hphantom{avg. }min & \hphantom{avg. }max & avg. work & \hphantom{avg. }min & \hphantom{avg. }max & avg. nodes & \hphantom{avg. }min & \hphantom{avg. }max\\ \hline%\hline
    1 &  1 & $0.00$ &$0.00$&$0.00$  & $0.00$ &$0.00$&$0.00$ &  $0.0$ & $0$ & $0$\\ %\hline
    2 &  1 & $0.00$ &$0.00$&$0.00$  & $0.00$ &$0.00$&$0.00$ &  $0.0$ & $0$ & $0$\\ %\hline
    3 &  6 & $0.01$ &$0.01$&$0.01$  & $0.01$ &$0.01$&$0.01$ &  $1.0$ & $1$ & $1$\\ %\hline
    4 & 16 & $0.09$ &$0.07$&$0.15$  & $0.08$ &$0.08$&$0.10$ &  $1.0$ & $1$ & $1$\\ %\hline
    5 & 38 & $-$ &$>36000.00$&$-$  & $-$&$-$&$-$ &  $-$ & $-$ & $-$\\ %\hline
    6 & 80 &$760.08$&$291.53$&$2246.26$  & $202.53$ &$118.34$&$483.90$ &  $460.9$ & $55$ & $2570$\\ %\hline
    7 &145 &$-$&$>36000.00$&$-$  & $-$ &$-$&$-$ &  $-$ & $-$ & $-$\\ \hline
\end{tabular}
\caption{\label{table-d4-erika}\cite{Langlois2023} for $(n,4)$}
\end{table}

\newpage
\subsubsection{All additional Inequalities}
\begin{table}[H]
\noindent
\begin{tabular}{|rr|rrr|rrr|rrr|}
    \hline 
    $n$ & $|Q_{max}|$& avg. time & \hphantom{avg. }min & \hphantom{avg. }max & avg. work & \hphantom{avg. }min & \hphantom{avg. }max & avg. nodes & \hphantom{avg. }min & \hphantom{avg. }max\\ \hline%\hline
    1 &  1 & $0.00$ &$0.00$&$0.00$  & $0.00$ &$0.00$&$0.00$ &  $0.0$ & $0$ & $0$\\ %\hline
    2 &  1 & $0.00$ &$0.00$&$0.00$  & $0.00$ &$0.00$&$0.00$ &  $0.0$ & $0$ & $0$\\ %\hline
    3 &  6 & $0.02$ &$0.02$&$0.02$  & $0.03$ &$0.03$&$0.03$ &  $1.0$ & $1$ & $1$\\ %\hline
    4 & 16 & $0.21$ &$0.13$&$0.31$  & $0.19$ &$0.16$&$0.23$ &  $1.0$ & $1$ & $1$\\ %\hline
    5 & 38 & $-$ &$>36000.00$&$-$  & $-$&$-$&$-$ &$-$ & $-$ & $-$\\ %\hline
    6 & 80 &$725.91$&$438.90$&$1637.14$  & $276.35$ &$220.12$&$435.53$ &  $67.9$ & $15$ & $196$\\ %\hline
    7 & 145&$-$&$>36000.00$&$-$  & $-$ &$-$&$-$ &  $-$ & $-$ & $-$\\ \hline
\end{tabular}
\caption{\label{table-d4-all4}IP with cube- and star cliques, layer- and subsolution inequalities for $(n,4)$}
\end{table}

\subsubsection{Infeasibility for $|Q_{max}|+1$} 
\begin{table}[H]
\noindent
\begin{tabular}{|rr|rrr|rrr|rrr|}
    \hline 
    $n$ & $|Q_{max}|$& avg. time & \hphantom{avg. }min & \hphantom{avg. }max & avg. work & \hphantom{avg. }min & \hphantom{avg. }max & avg. nodes & \hphantom{avg. }min & \hphantom{avg. }max\\ \hline%\hline
    1 &  1 & $0.00$ &$0.00$&$0.00$  & $0.00$ &$0.00$&$0.00$ &  $0.0$ & $0$ & $0$\\ %\hline
    2 &  1 & $0.00$ &$0.00$&$0.00$  & $0.00$ &$0.00$&$0.00$ &  $0.0$ & $0$ & $0$\\ %\hline
    3 &  6 & $0.02$ &$0.02$&$0.02$  & $0.02$ &$0.02$&$0.03$ &  $1.0$ & $1$ & $1$\\ %\hline
    4 & 16 & $0.67$ &$0.55$&$0.80$  & $0.28$ &$0.23$&$0.34$ &  $1.0$ & $1$ & $1$\\ %\hline
    5 & 38 & $-$ &$>36000.00$&$-$  & $-$&$-$&$-$ &$-$ & $-$ & $-$\\ %\hline
    6 & 80 &$879.43$&$638.87$&$2072.33$  & $173.93$ &$152.10$&$303.54$ &  $4545.6$ & $3077$ & $6347$\\ %\hline
    7 & 145&$-$&$>36000.00$&$-$  & $-$ &$-$&$-$ &  $-$ & $-$ & $-$\\ \hline
\end{tabular}
\caption{\label{table-d4-all4}Proving infeasibility for $|Q_{max}|+1$, Base IP for $(n,4)$}
\end{table}

\subsubsection{Infeasibility for $|Q_{max}|+1$, all additional Inequalities} 
\begin{table}[H]
\noindent
\begin{tabular}{|rr|rrr|rrr|rrr|}
    \hline 
    $n$ & $|Q_{max}|$& avg. time & \hphantom{avg. }min & \hphantom{avg. }max & avg. work & \hphantom{avg. }min & \hphantom{avg. }max & avg. nodes & \hphantom{avg. }min & \hphantom{avg. }max\\ \hline%\hline
    1 &  1 & $0.00$ &$0.00$&$0.00$  & $0.00$ &$0.00$&$0.00$ &  $0.0$ & $0$ & $0$\\ %\hline
    2 &  1 & $0.00$ &$0.00$&$0.00$  & $0.00$ &$0.00$&$0.00$ &  $0.0$ & $0$ & $0$\\ %\hline
    3 &  6 & $0.02$ &$0.02$&$0.02$  & $0.02$ &$0.02$&$0.02$ &  $1.0$ & $1$ & $1$\\ %\hline
    4 & 16 & $0.01$ &$0.01$&$0.01$  & $0.01$ &$0.01$&$0.01$ &  $0.0$ & $0$ & $0$\\ %\hline
    5 & 38 & $-$ &$>36000.00$&$-$  & $-$&$-$&$-$ &$-$ & $-$ & $-$\\ %\hline
    6 & 80 &$10.68$&$ 8.58$&$17.49$  & $9.21$ &$ 7.75$&$15.30$ &  $1.0$ & $1$ & $1$\\ %\hline
    7 & 145&$-$&$>36000.00$&$-$  & $-$ &$-$&$-$ &  $-$ & $-$ & $-$\\ \hline
\end{tabular}
\caption{\label{table-d4-all4}Proving infeasibility for $|Q_{max}|+1$, IP with cube- and star cliques, layer- and subsolution inequalities for $(n,4)$}
\end{table}

\subsubsection{Warmstarts with Base Model} 
\begin{table}[H]
\noindent
\begin{tabular}{|rr|rrr|rrr|rrr|}
    \hline 
    $n$ & $|Q_{max}|$& avg. time & \hphantom{avg. }min & \hphantom{avg. }max & avg. work & \hphantom{avg. }min & \hphantom{avg. }max & avg. nodes & \hphantom{avg. }min & \hphantom{avg. }max\\ \hline%\hline
    1 &  1 & $0.00$ &$0.00$&$0.00$  & $0.00$ &$0.00$&$0.00$ &  $0.0$ & $0$ & $0$\\ %\hline
    2 &  1 & $0.00$ &$0.00$&$0.00$  & $0.00$ &$0.00$&$0.00$ &  $0.0$ & $0$ & $0$\\ %\hline
    3 &  6 & $0.02$ &$0.02$&$0.02$  & $0.02$ &$0.02$&$0.02$ &  $1.0$ & $1$ & $1$\\ %\hline
    4 & 16 & $0.57$ &$0.48$&$0.63$  & $0.31$ &$0.30$&$0.32$ &  $1.0$ & $1$ & $1$\\ %\hline
    5 & 38 & $-$ &$>36000.00$&$-$  & $-$&$-$&$-$ &$-$ & $-$ & $-$\\ %\hline
    6 & 80 &$64.98$&$62.51$&$67.19$  & $41.27$ &$39.96$&$ 42.66$ &  $1.0$ & $1$ & $1$\\ %\hline
    7 & 145&$-$&$>36000.00$&$-$  & $-$ &$-$&$-$ &  $-$ & $-$ & $-$\\ \hline
\end{tabular}
\caption{\label{table-d4-warmstarts-base}Warmstarts on base IP for $(n,4)$}
\end{table}

\subsubsection{Warmstarts with all additional Inequalities} 
\begin{table}[H]
\noindent
\begin{tabular}{|rr|rrr|rrr|rrr|}
    \hline 
    $n$ & $|Q_{max}|$& avg. time & \hphantom{avg. }min & \hphantom{avg. }max & avg. work & \hphantom{avg. }min & \hphantom{avg. }max & avg. nodes & \hphantom{avg. }min & \hphantom{avg. }max\\ \hline%\hline
    1 &  1 & $0.00$ &$0.00$&$0.00$  & $0.00$ &$0.00$&$0.00$ &  $0.0$ & $0$ & $0$\\ %\hline
    2 &  1 & $0.00$ &$0.00$&$0.00$  & $0.00$ &$0.00$&$0.00$ &  $0.0$ & $0$ & $0$\\ %\hline
    3 &  6 & $0.02$ &$0.02$&$0.02$  & $0.03$ &$0.03$&$0.03$ &  $1.0$ & $1$ & $1$\\ %\hline
    4 & 16 & $0.12$ &$0.10$&$0.12$  & $0.14$ &$0.13$&$0.15$ &  $1.0$ & $1$ & $1$\\ %\hline
    5 & 38 & $-$ &$>36000.00$&$-$  & $-$&$-$&$-$ &$-$ & $-$ & $-$\\ %\hline
    6 & 80 &$19.77$&$16.91$&$26.88$  & $23.99$ &$20.76$&$32.18$ &  $1.0$ & $1$ & $1$\\ %\hline
    7 & 145&$-$&$>36000.00$&$-$  & $-$ &$-$&$-$ &  $-$ & $-$ & $-$\\ \hline
\end{tabular}
\caption{\label{table-d4-warmstarts-all4}Warmstarts on IP with cube- and star cliques, layer- and subsolution inequalities for $(n,4)$}
\end{table}

\newpage
\subsubsection{Comparison of IP Formulations} 
\begin{table}[H]
\noindent
\begin{tabular}{|rr|rrrrrrr|}
    \hline 
    $n$ & $|Q_{max}|$& Base   & LMR     & Inq.   & Inf & Inf Inq.  & ws    & ws Inq. \\ \hline%\hline
    1 &  1           &$0.00$  &$0.00$   &$0.00$  &$0.00$  &$0.00$   &$0.00$  &$0.00$   \\ %\hline
    2 &  1           &$0.00$  &$0.00$   &$0.00$  &$0.00$  &$0.00$   &$0.00$  &$0.00$   \\ %\hline
    3 &  6           &$0.02$  &$0.01$   &$0.02$  &$0.02$  &$0.02$   &$0.02$  &$0.02$   \\ %\hline
    4 & 16           &$0.80$  &$0.09$   &$0.21$  &$0.67$  &$0.01$   &$0.57$  &$0.12$   \\ %\hline
    5 & 38           &$-$     &$-$      &$-$     & $-$    & $-$      &$-$      &$-$    \\ %\hline
    6 & 80           &$569.57$&$760.08$ &$725.91$&$879.43$&$\textbf{10.68}$&$64.98$  &$19.77$   \\ %\hline
    7 &145           &\hphantom{yyyyyyy}$-$     &\hphantom{yyyyyyy}$-$&\hphantom{yyyyyyy}$-$      &\hphantom{yyyyyyy}$-$      &\hphantom{yyyyyyy}$-$        &\hphantom{yyyyyyy}$-$      &\hphantom{yyyyyyy}$-$     \\ \hline
    
\end{tabular}
\caption{\label{table-d4-runtimes}Runtime comparison for $(n,4)$}
\end{table}

The fastest variant is marked bold for instances where the base model exceeds $10s$ runtime. For $d=4$, the smaller instances up to $n=4$ solve too fast and in the root node, leaving little room for comparing the proposed variants. In contrast to $d=3$, warmstarts with suitable certificates already produce a speedup by roughly one order of magnitude compared to the respective variant without a warmstart for $n=6$. The additional clique inequalities, layer- and subsolution inequalities do not provide a significant speedup for $n=6$ by themselves; however, they reduce the number of nodes by one order of magnitude. We expect the inequalities to result in a larger speedup for instances $n>6$, as a similar effect can be observed for $d=3$ (for smaller instances, the additional inequalities may slow down the solving process, which outweighs the speedup through the reduction of nodes). 

As is the case with $d=3$, the difficulty of the instances lies in the dual, and the mentioned additional inequalities can greatly improve the performance of closing the dual. Proving a solution of size $|Q_{max}|+1$ is infeasible (provided we know a certificate that we believe to be maximal) is the fastest method for $d=4, n=6$.

\newpage
\subsection{The Partial $(n,5)$-Queens Problem}
\subsubsection{Base Model} 
\begin{table}[H]
\noindent
\begin{tabular}{|rr|rrr|rrr|rrr|}
    \hline 
    $n$ & $|Q_{max}|$& avg. time & \hphantom{avg. }min & \hphantom{avg. }max & avg. work & \hphantom{avg. }min & \hphantom{avg. }max & avg. nodes & \hphantom{avg. }min & \hphantom{avg. }max\\ \hline%\hline
    1 &  1 & $0.00$ &$0.00$&$0.00$  & $0.00$ &$0.00$&$0.00$ &$0.0$ & $0$ & $0$\\ %\hline
    2 &  1 & $0.00$ &$0.00$&$0.00$  & $0.00$ &$0.00$&$0.00$ &$0.0$ & $0$ & $0$\\ %\hline
    3 & 11 & $0.78$ &$0.58$&$1.07$  & $0.42$ &$0.34$&$0.50$ &$1.0$ & $1$ & $1$\\ %\hline
    4 & 32 & $21.14$ &$18.14$&$23.82$  & $14.14$ &$13.02$&$15.86$ &$1.0$ & $1$ & $1$\\ %\hline
    5 & $\geq 90$ & $-$ &$>36000.00$&$-$  & $-$ &$-$&$-$ & $-$& $-$ & $-$\\ \hline
\end{tabular}
\caption{\label{table-d5-baseIP} Base IP for $(n,5)$}
\end{table}

\subsubsection{LMR} 
\begin{table}[H]
\noindent
\begin{tabular}{|rr|rrr|rrr|rrr|}
    \hline 
    $n$ & $|Q_{max}|$& avg. time & \hphantom{avg. }min & \hphantom{avg. }max & avg. work & \hphantom{avg. }min & \hphantom{avg. }max & avg. nodes & \hphantom{avg. }min & \hphantom{avg. }max\\ \hline%\hline
    1 &  1 & $0.00$ &$0.00$&$0.00$  & $0.00$ &$0.00$&$0.00$ &$0.0$ & $0$ & $0$\\ %\hline
    2 &  1 & $0.00$ &$0.00$&$0.00$  & $0.00$ &$0.00$&$0.00$ &$0.0$ & $0$ & $0$\\ %\hline
    3 & 11 & $0.20$ &$0.15$&$0.29$  & $0.12$ &$0.11$&$0.15$ &$1.0$ & $1$ & $1$\\ %\hline
    4 & 32 & $3.38$ &$2.66$&$5.30$  & $1.83$ &$1.57$&$2.61$ &$1.0$ & $1$ & $1$\\ %\hline
    5 & $\geq 90$ & $-$ &$>36000.00$&$-$  & $-$ &$-$&$-$ & $-$& $-$ & $-$\\ \hline
\end{tabular}
\caption{\label{table-d5-erika} \cite{Langlois2023} for $(n,5)$}
\end{table}

\subsubsection{All additional Inequalities}
\begin{table}[H]
\noindent
\begin{tabular}{|rr|rrr|rrr|rrr|}
    \hline 
    $n$ & $|Q_{max}|$& avg. time & \hphantom{avg. }min & \hphantom{avg. }max & avg. work & \hphantom{avg. }min & \hphantom{avg. }max & avg. nodes & \hphantom{avg. }min & \hphantom{avg. }max\\ \hline%\hline
    1 &  1 & $0.00$ &$0.00$&$0.00$  & $0.00$ &$0.00$&$0.00$ &$0.0$ & $0$ & $0$\\ %\hline
    2 &  1 & $0.00$ &$0.00$&$0.00$  & $0.00$ &$0.00$&$0.00$ &$0.0$ & $0$ & $0$\\ %\hline
    3 & 11 & $0.53$ &$0.40$&$0.66$  & $0.27$ &$0.23$&$0.31$ &$1.0$ & $1$ & $1$\\ %\hline
    4 & 32 & $2.40$ &$1.75$&$4.63$  & $14.14$ &$1.93$&$3.76$ &$1.0$ & $1$ & $1$\\ %\hline
    5 & $\geq 90$ & $-$ &$>36000.00$&$-$  & $-$ &$-$&$-$ & $-$& $-$ & $-$\\ \hline
\end{tabular}
\caption{\label{table-d4-all4} IP with cube- and star cliques, layer- and subsolution inequalities for $(n,5)$}
\end{table}

\subsubsection{Infeasibility for $|Q_{max}|+1$} 
\begin{table}[H]
\noindent
\begin{tabular}{|rr|rrr|rrr|rrr|}
    \hline 
    $n$ & $|Q_{max}|$& avg. time & \hphantom{avg. }min & \hphantom{avg. }max & avg. work & \hphantom{avg. }min & \hphantom{avg. }max & avg. nodes & \hphantom{avg. }min & \hphantom{avg. }max\\ \hline%\hline
    1 &  1 & $0.00$ &$0.00$&$0.00$  & $0.00$ &$0.00$&$0.00$ &$0.0$ & $0$ & $0$\\ %\hline
    2 &  1 & $0.00$ &$0.00$&$0.00$  & $0.00$ &$0.00$&$0.00$ &$0.0$ & $0$ & $0$\\ %\hline
    3 & 11 & $2.11$ &$0.87$&$4.36$  & $0.74$ &$0.39$&$1.17$ &$102.7$ & $1$ & $873$\\ %\hline
    4 & 32 & $44.47$ &$44.47$&$167.89$  &$18.21$ &$9.99$&$38.14$ &$604.0$ & $1$ & $5939$\\ %\hline
    5 & $\geq 90$ & $-$ &$>36000.00$&$-$  & $-$ &$-$&$-$ & $-$& $-$ & $-$\\ \hline
\end{tabular}
\caption{\label{table-d4-all4}Proving infeasibility for $|Q_{max}|+1$, base IP for $(n,5)$}
\end{table}

\subsubsection{Infeasibility for $|Q_{max}|+1$, all additional Inequalities} 
\begin{table}[H]
\noindent
\begin{tabular}{|rr|rrr|rrr|rrr|}
    \hline 
    $n$ & $|Q_{max}|$& avg. time & \hphantom{avg. }min & \hphantom{avg. }max & avg. work & \hphantom{avg. }min & \hphantom{avg. }max & avg. nodes & \hphantom{avg. }min & \hphantom{avg. }max\\ \hline%\hline
    1 &  1 & $0.00$ &$0.00$&$0.00$  & $0.00$ &$0.00$&$0.00$ &$0.0$ & $0$ & $0$\\ %\hline
    2 &  1 & $0.00$ &$0.00$&$0.00$  & $0.00$ &$0.00$&$0.00$ &$0.0$ & $0$ & $0$\\ %\hline
    3 & 11 & $0.40$ &$0.31$&$0.61$  & $0.26$ &$0.23$&$0.31$ &$1.0$ & $1$ & $1$\\ %\hline
    4 & 32 & $0.07$ &$0.07$&$0.07$  & $0.07$ &$0.07$&$0.07$ &$0.0$ & $0$ & $0$\\ %\hline
    5 & $\geq 90$ & $-$ &$-$&$-$  & $-$ &$-$&$-$ & $-$& $-$ & $-$\\ \hline
\end{tabular}
\caption{\label{table-d4-all4}Proving infeasibility for $|Q_{max}|+1$, IP with cube- and star cliques, layer- and subsolution inequalities for $(n,5)$}
\end{table}

\subsubsection{Comparison of IP Formulations} 
\begin{table}[H]
\noindent
\begin{tabular}{|rr|rrrrr|}
    \hline 
    $n$ & $|Q_{max}|$& Base   & LMR     & Inq.   & Inf & Inf Inq.  \\ \hline%\hline
    1 &  1           &$0.00$  &$0.00$   &$0.00$  &$0.00$  &$0.00$      \\ %\hline
    2 &  1           &$0.00$  &$0.00$   &$0.00$  &$0.00$  &$0.00$      \\ %\hline
    3 & 11           &$0.78$  &$0.20$   &$0.53$  &$2.11$  &$0.31$     \\ %\hline
    4 & 32           &$21.14$ &$3.38$   &$2.40$  &$44.47$  &$\textbf{0.07}$      \\ %\hline
    5 &$\geq 90$           &\hphantom{yyyyyyy}$-$     &\hphantom{yyyyyyy}$-$&\hphantom{yyyyyyy}$-$      &\hphantom{yyyyyyy}$-$      &\hphantom{yyyyyyy}$-$  \\ \hline
\end{tabular}
\caption{\label{table-d4-runtimes}Runtime comparison for $(n,5)$}
\end{table}

Again, the fastest variant is marked bold for $n=4$ where the base model exceeds $10s$ runtime. For $d=5$, this is the case for only $n=4$, as $n=5$ remains unsolved and $n=1,2,3$ solves under 1 second.

\end{landscape}
\newpage
\subsection{Density of Solutions}\label{subsec:comp_density}
Results for $n=1,2$ are excluded, as they follow trivially from proposition \ref{cor_2d}. The plots show the density for each 2d-layer (in order and row-wise), the color scale is chosen to visualize the empirical density reminiscent of a higher dimensional equivalent to the density described by \cite{Simkin2021}.
\subsubsection{$d=3$}

\begin{figure}[H]
\setkeys{Gin}{width=0.3\linewidth}
\includegraphics{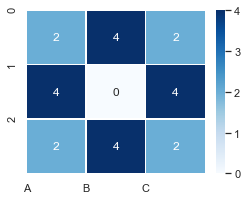}
\hfill
\includegraphics{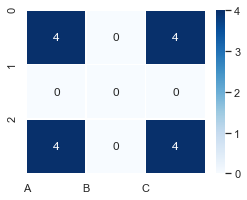}
\hfill
\includegraphics{density_3d/d3_n3_1.png}
\caption{Density for the $(3,3)$-queens problem}
\label{tab:density_3_d3}
\end{figure}

\begin{figure}[H]
\setkeys{Gin}{width=0.24\linewidth}
\includegraphics{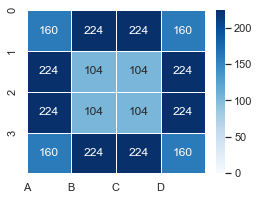}
\hfill
\includegraphics{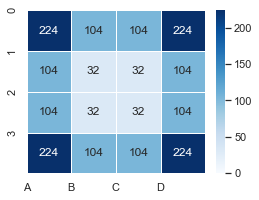}
\hfill
\includegraphics{density_3d/d3_n4_2.png}
\hfill
\includegraphics{density_3d/d3_n4_1.png}
\caption{Density for the $(4,3)$-queens problem}
\label{tab:density_n4_d3}
\end{figure}

\begin{figure}[H]
\setkeys{Gin}{width=0.29\linewidth}
\includegraphics{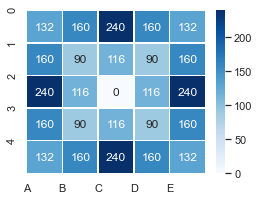}
\hfill
\includegraphics{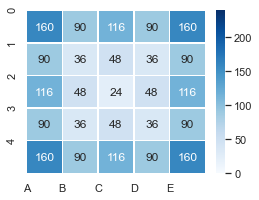}
\hfill
\includegraphics{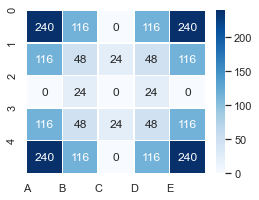}

\smallskip

\includegraphics{density_3d/d3_n5_2.png}
\hfill
\includegraphics{density_3d/d3_n5_1.png}
\hfill
\includegraphics{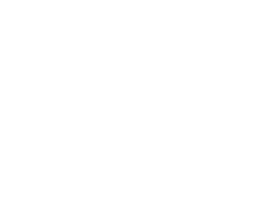}
\caption{Density for the $(5,3)$-queens problem}
\label{tab:density_n5_d3}
\end{figure}

\begin{figure}[H]
\setkeys{Gin}{width=0.29\linewidth}
\includegraphics{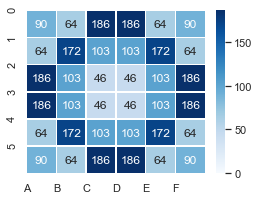}
\hfill
\includegraphics{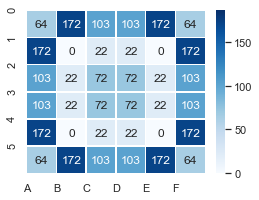}
\hfill
\includegraphics{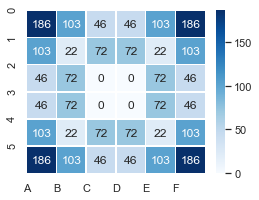}

\smallskip

\includegraphics{density_3d/d3_n6_3.png}
\hfill
\includegraphics{density_3d/d3_n6_2.png}
\hfill
\includegraphics{density_3d/d3_n6_1.png}
\caption{Density for the $(6,3)$-queens problem}
\label{tab:density_n6_d3}
\end{figure}

\begin{figure}[H]
\setkeys{Gin}{width=0.29\linewidth}
\includegraphics{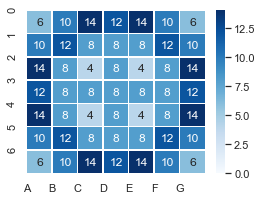}
\hfill
\includegraphics{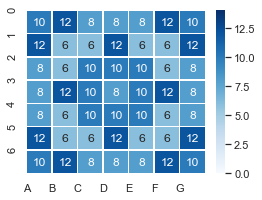}
\hfill
\includegraphics{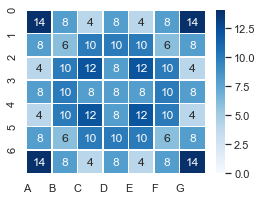}

\smallskip

\includegraphics{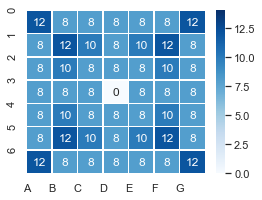}
\hfill
\includegraphics{density_3d/d3_n7_3.png}
\hfill
\includegraphics{density_3d/d3_n7_2.png}

\smallskip

\includegraphics{density_3d/d3_n7_1.png}
\hfill
\includegraphics{density_n11_d3/density_n11_d3_BLANK.png}
\hfill
\includegraphics{density_n11_d3/density_n11_d3_BLANK.png}
\caption{Density for the $(7,3)$-queens problem}
\label{tab:density_n7_d3}
\end{figure}

\newpage
Due to all solutions for $n=11$ and $n=13$ being regular, the density for those $n$ is simply the uniform distribution. However we may describe their structure further for a single class by fixing a queen on $(1,1,1)$.\\
\begin{figure}[H]
\setkeys{Gin}{width=0.3\linewidth}
\includegraphics{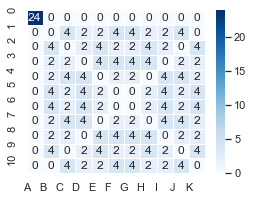}
\hfill
\includegraphics{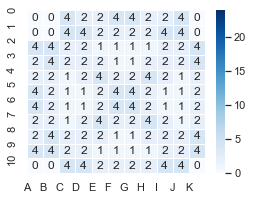}
\hfill
\includegraphics{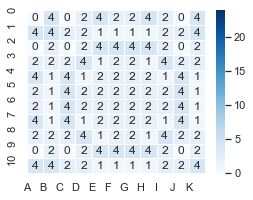}

\smallskip
\includegraphics{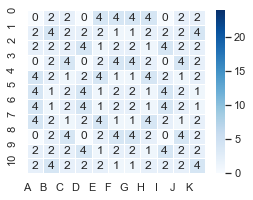}
\hfill
\includegraphics{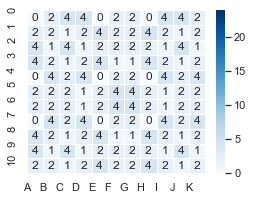}
\hfill
\includegraphics{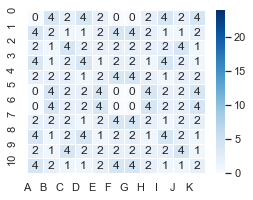}

\smallskip
\includegraphics{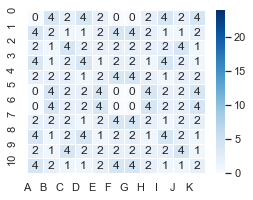}
\hfill
\includegraphics{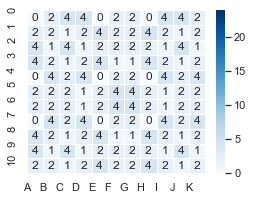}
\hfill
\includegraphics{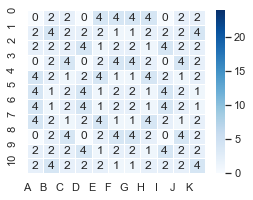}

\smallskip
\includegraphics{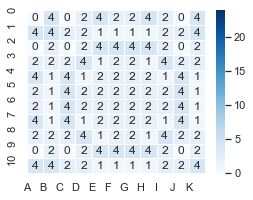}
\hfill
\includegraphics{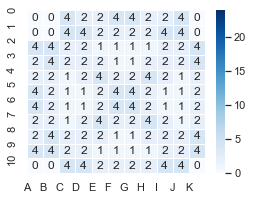}
\hfill
\includegraphics{density_n11_d3/density_n11_d3_BLANK.png}

\caption{Density of the $(0,0,0)$-class of the maximal partial solutions for the $(11,3)$-queens problem}
\label{tab:density_n11_d3}
\end{figure}

\newpage
\subsubsection{$d=4$}\label{subsec:comp_density_4d}

\begin{figure}[H]
\setkeys{Gin}{width=0.3\linewidth}
\includegraphics{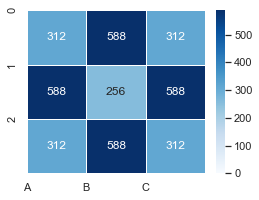}
\hfill
\includegraphics{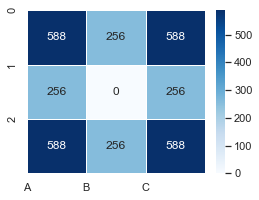}
\hfill
\includegraphics{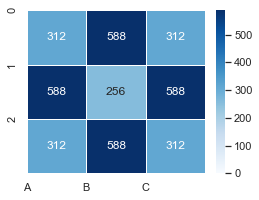}

\smallskip

\includegraphics{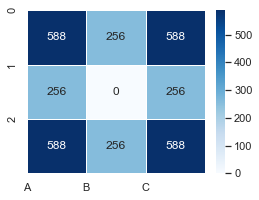}
\hfill
\includegraphics{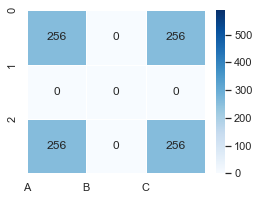}
\hfill
\includegraphics{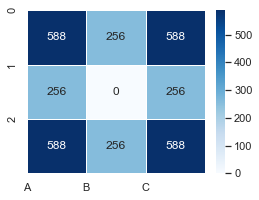}

\smallskip

\includegraphics{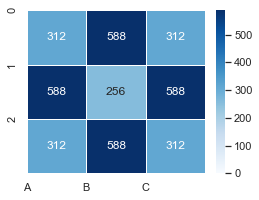}
\hfill
\includegraphics{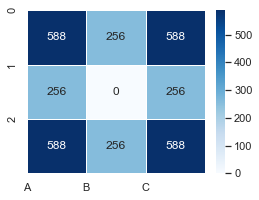}
\hfill
\includegraphics{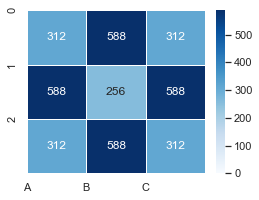}

\caption{Density for the $(3,4)$-queens problem}
\label{tab:density_n03_d4 }
\end{figure}

\begin{figure}[H]
\setkeys{Gin}{width=0.45\linewidth}
\includegraphics{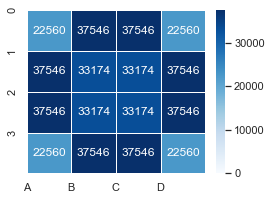}
\hfill
\includegraphics{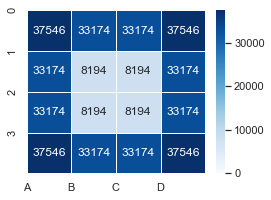}

\smallskip

\includegraphics{density_n04_d4/densityn_n04_d4_2.png}
\hfill
\includegraphics{density_n04_d4/density_n04_d4_1.png}

\bigskip
\bigskip
\bigskip
\includegraphics{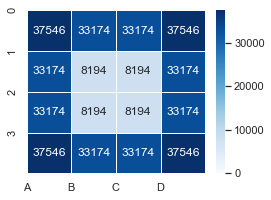}
\hfill
\includegraphics{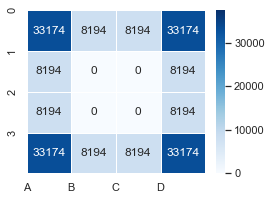}

\smallskip

\includegraphics{density_n04_d4/densityn_n04_d4_4.png}
\hfill
\includegraphics{density_n04_d4/densityn_n04_d4_3.png}
\caption{Density for the $(4,4)$-queens problem (limited to the first two layers due to symmetry)}
\label{tab:density_n04_d4 }
\end{figure}

\newpage
\section{Conclusions and Future Work}
We provided a first overview of the $(n,2)$-queens problem, connecting and improving existing results of related problems and methods for both theoretical results and the solving of large instances of the $(n,d)$-queens problem. We compare several different IP formulations and achieve a speedup of $15.5\times -71.2 \times $ less computational time compared to \cite{langlois2022complexity}, who recently succeeded in breaking new, never-before-solved instances. Our results suggest that breaking additional, previously unsolved instances with the proposed methods or improvements is likely possible. 

The problem of solving instances of the $(n,d)$-queens problem for $d \geq 3$ can be summarized as follows:
\begin{itemize}
    \item \textbf{Construction of Solutions} \\
    On the primal side, we discussed several heuristic approaches to constructing solutions that are either maximal partial solutions or very close to the optimum. Additionally, even if we discard this knowledge, the computational results showed that finding a solution of the (yet to be proven) optimal value takes an insignificant part of the total computational time.
    \item \textbf{Proving Maximality of Solutions} \\
    The time-intensive part of the solving process remains to close the dual. We greatly improved the computational time by adding additional valid inequalities, resulting in a significantly lower optimum to the LP relaxation of the problem. The corresponding theoretical results are described in section \ref{subsec_upperbounds}. These and the computational results suggest that the potential speedup scales with $d$.
\end{itemize}

We emphasize that this problem structure differs from the classic $(n,2)$-queens problem in complexity and, more importantly, as the former comes down to a feasibility problem and does not inherit the difficulty of closing the dual. Here, we showed that certain clique inequalities, in particular, the (hyper-)cube inequalities as discussed by \cite{fischetti2019finding} for the $(n,2)$-queens problem improve on the dual and thereby computational time.

Following this conclusion, a recipe for solving further instances seems to be a combination of  (a) improved theoretical results, also for subsets of the instances, (b) further study of clique inequalities and clique separation, and (c) making use of the unique structure of the problem during the solving process, for example by its corresponding symmetry group. One may also compare the IP with constraint programming and SAT solvers, specifically for those instances where we suspect already knowing the maximal partial solution.

Additionally, the generalization to the $(n,d)$-queens problem opens up the entire branch of variations to the classical $(n,2)$-queens problem to their higher dimensional equivalents. Through discussing the $(n,d)$-queens problem, we have also gained insight into $(n,d)$-queens completion and shown some preliminary results. Similarly, one may be interested in the many possible variations and related problems as discussed in the introductory section: minimum dominating sets, different board structures (or underlying graphs), and varying pieces (or 'vision'). These topics may be discussed in future work.

Finally, we have listed a number of open questions regarding the $(n,d)$-queens problem. We believe that a density function of the $(n,d)$-queens problem for $d \geq 3$ exists. If one imagines a hypersphere intersecting with a hypercube in the corresponding dimension $d$, then the density is the highest in the $(n,d)$-cube around the boundary of the sphere and lower in the middle and at all corners of the cube. We can observe this for $d=2$, see Fig. \ref{fig:dens_simkin} and empirically from the densities shown in section \ref{subsec:comp_density}. We may follow the approach \cite{Simkin2021} to show that such a density exists; however, it is not clear if the distinction between $n$ for which regular solutions exist and those for which they do not allow for this approach. As we have noted, proving that such a density exists would imply the existence of a non-regular solution of full size $n^{d-1}$.\\

In the following two tables, we summarize the results regarding the current state of solved instances for the $(n,d)$-queens problem (c.f. \cite{langlois2022complexity}).

\vspace{0.5cm}

\begin{table}[H]
\begin{center}
\noindent
\begin{tabular}{r|rrrrrrrrrrrrr}
    %\hline 
    d \textbackslash n &1&2&3&4&5&6&7&8&9&10&11&12&13  \\ \hline%\hline
    1 &  1&1&1&1&1&1&1&1&1&1&1&1&1  \\
    2 &  1&1&2&4&5&6&7&8&9&10&11&12&13  \\
    3 &  1&1&4&7&13&21&32&48&67&91&121& &169  \\
    4 &  1&1&6&16&38&80&145&&&&&&  \\
    5 &  1&1&11&32&&&&&&&&&  \\
    6 &  1&1&19&64&&&&&&&&&  \\
    7 &  1&1&32&128&&&&&&&&&  \\  
    8 &  1&1&52 &&&&&&&&&& \\  
\end{tabular}
\hspace{0.5cm}
\caption{\label{table-results-1}Known maximal partial solutions to the $(n,d)$-queens problem}
\end{center}
\end{table}

\newpage
\section{Bibliography}
\bibliography{main.bib}
\end{document}